\renewcommand{\Bbb}{\mathbb}
\newtheorem{theorem}{Theorem}
\newtheorem{lemma}[theorem]{Lemma}
\newtheorem{proposition}[theorem]{Proposition}
\newtheorem{corollary}[theorem]{Corollary}
\newtheorem{definition}[theorem]{Definition}
\newtheorem{example}[theorem]{Example}
\numberwithin{theorem}{section}
\numberwithin{equation}{section}
\title[Sharpened Singular Adams-Type Inequalities]{A Note on Sharpened Singular Adams-Type Inequalities}
\author[D.K. Mahanta]{Deepak Kumar Mahanta}
\address[D.K. Mahanta]{Department of Mathematics, Indian Institute of Technology Jodhpur, Rajasthan 342030, India}
\email{mahanta.1@iitj.ac.in}
\author[T. Mukherjee]{Tuhina Mukherjee}
\address[T. Mukherjee]{Department of Mathematics, Indian Institute of Technology Jodhpur, Rajasthan 342030, India}
\email{tuhina@iitj.ac.in}
\author[A. Sarkar]{Abhishek Sarkar$^*$}
\address[A. Sarkar]{Department of Mathematics, Indian Institute of Technology Jodhpur, Rajasthan 342030, India}
\email{abhisheks@iitj.ac.in}
\thanks{$^*$corresponding author}
\subjclass{35A23; 35B33; 35J30; 26D10}
\keywords{Adams' inequality, $(p,\frac{n}{2})$-biharmonic operator, Singular exponential growth, Mountain pass theorem}
\begin{document}
\begin{abstract}
We establish a sharp Adams-type inequality in higher-order function spaces with singular weights on $\mathbb{R}^n.$ A sharp singular concentration–compactness principle, improving Lions’ result, is also proved. The study distinguishes between critical and subcritical sharp singular Adams-type inequalities and shows their equivalence. Furthermore, we analyze the asymptotic behavior of the associated bounds and relate the suprema of the critical and subcritical cases. A new compact embedding, crucial to our analysis, is also derived. Moreover, as an application of these results, by employing the mountain pass theorem, we study the existence of nontrivial solutions to a class of nonhomogeneous quasilinear elliptic equations involving the $(p,\frac{n}{2})$-biharmonic operator with singular exponential growth.
\end{abstract}

\maketitle
\section{Introduction and main results}\label{sec1}
To deal with nonlinear elliptic PDEs, it is worth mentioning that several authors work with Sobolev spaces of different orders. Moreover, to understand the basic features of the Sobolev spaces, one sees that for any $m\in\mathbb{N}$ and $p\geq 1$, the space $W^{m,p}(\mathbb{R}^n)$ can be categorized in three different ways, notably 
 \begin{itemize}
 \item[\textnormal{(a)}] the {\it Gagliardo-Nirenberg-Sobolev} ({\it GNS}, in short) case: $mp<n$, 
 \item[\textnormal{(b)}] the {\it Sobolev borderline} case: $mp=n$, and 
 \item[\textnormal{(c)}] the {\it Morrey's} case: $mp>n$. \end{itemize}
 The GNS inequality asserts that the embedding $W^{m,p}(\mathbb{R}^n)\hookrightarrow L^r (\mathbb{R}^n)$ is continuous for all $r\in [p,\frac{np}{n-mp}]$ and $mp<n$. In this case, we can study the variational problems in Sobolev spaces $W^{m,p}(\mathbb{R}^n)$ with subcritical and critical polynomial growth, that is, the nonlinear term cannot exceed the polynomial of degree $\frac{np}{n-mp}$. In spite of this, the Sobolev borderline case is very special because $\frac{np}{n-mp}\to+\infty$ as $mp\to n$, which suggests that $W^{m,p}(\mathbb{R}^n)\subset L^\infty (\mathbb{R}^n)$. Unfortunately, this only holds when $m=p=n=1$ and one knows that it may fail for $n\geq 2$. In this instance, we note that every polynomial growth can be entertained and thus, one may ask for a function with another kind of maximal growth. The Moser-Trudinger and Adams' inequalities play key roles in handling such difficulties. In this direction, for any bounded smooth domain $\Omega\subset \mathbb{R}^n~(n\geq 2)$ with $p=n$ and $m=1$, Trudinger \cite{Trudinger-1967} established that the embedding $W^{1,n}_0(\Omega)\hookrightarrow L_{\Phi_\alpha}(\Omega)$ is valid, where $L_{\Phi_\alpha}(\Omega)$ denotes the Orlicz space associated with the Young function $\Phi_\alpha(t)=\operatorname{\exp}(\alpha|t|^{\frac{n}{n-1}})-1$ for some constant $\alpha>0$.
 The sharpness of the exponent $\alpha$ was first studied by Moser \cite{Moser-1970/71} and proved a version of the Moser-Trudinger inequality as follows 
 \begin{align}\label{eq1.3}
    \sup_{u\in W^{1,n}_0(\Omega),~\|\nabla u\|_{L^n(\Omega)}~\leq 1}~\int_{\Omega} \operatorname{\exp}(\alpha|u|^{\frac{n}{n-1}})\,\mathrm{d}x\leq C(n)|\Omega|,~\forall~~\alpha\leq \alpha_n, 
 \end{align}
 where $\alpha_n=n\omega_{n-1}^{\frac{1}{n-1}}$ with $\omega_{n-1}$ is the $(n-1)$-dimensional surface measure of the unit sphere $\mathbb{S}^{n-1}$ in $\mathbb{R}^n$. In addition, the above supremum is no longer finite if $\alpha> \alpha_n$. Later, Adimurthi-Sandeep \cite{Adimurthi-Sandeep-2007} generalized the result obtained by Moser in \cite{Moser-1970/71} and established a sharp singular Moser-Trudinger inequality with the same Dirichlet norm used in \eqref{eq1.3}. Both nonsingular and singular Moser-Trudinger inequalities in the Euclidean space have been studied in \cite{Cao-1992, Ruf-2005} in $\mathbb{R}^2$, then for higher dimensions in \cite{Adachi-Tanaka-2000, Adimurthi-Yang-2010, de Souza-2011, do O-1997, do O-de Souza-2015, Li-Ruf-2008}. Such inequalities have several applications in conformal geometry and geometric analysis of PDEs; for a detailed study, we refer to \cite{Chang-Yang-2003, Beckner-1993, Lam-Lu-2012@} and the references therein.

 Due to the loss of compactness in the embedding $W^{1,n}_0(\Omega)\hookrightarrow L_{\Phi_\alpha}(\Omega)$, the main difficulty that arises in studying variational problems is to prove the Palais-Smale compactness condition. To avoid this, Lions' \cite{Lions-1985} introduced a concentration-compactness principle, basically a generalization of the Moser-Trudinger inequality. It states that if $u_k\rightharpoonup u$ weakly in $W^{1,n}_0(\Omega)$ with $u\neq 0$, $\|\nabla u_k\|_{L^n(\Omega)}\leq 1$, $|\nabla u_k|^n \rightharpoonup \mu$ weakly in $\mathcal{M}(\overline{\Omega})$, where $\mathcal{M}(\overline{\Omega})$ denotes the space of the Radon measure in $\mathbb{R}^n$, then 
 \begin{align}\label{eq1.4}
   \sup_{k\in\mathbb{N}} \int_{\Omega} \operatorname{\exp}(\alpha_n \wp|u_k|^{\frac{n}{n-1}})\,\mathrm{d}x\leq C(\wp,\Omega), ~\forall~~ \wp\in [1,\eta),   
 \end{align}
 where $\eta=\big(1-\|\nabla u^\ast \|^n_{L^n(\Omega)}\big)^{-\frac{1}{n-1}}$ and $u^\ast$ is the spherical symmetric decreasing rearrangement of $u$. Later, in \cite{Cerny-Cianchi-Hencl-2013} the authors sharpened the constant $\wp$ in \eqref{eq1.4} and established an improved version of Lions' concentration-compactness principle in $W^{1,n}_0(\Omega)$. The corresponding results in $\mathbb{R}^n$ can also be found in
 \cite{do O-de Souza-de Medeiros-Severo-2014, Zhang-Chen-2018}.

 To the best of our knowledge, there has been significant progress in the Moser-Trudinger inequalities to analyzing the existence and multiplicity of solutions to quasilinear elliptic equations involving the $n$-Laplace operator as well as the $(p,n)$-Laplace operator in the whole space $\mathbb{R}^n$. In this context, one may see \cite{Adimurthi-Yang-2010, Carvalho-Figueiredo-Furtado-Medeiros-2021, Lam-Lu-2012, Mahanta-Mukherjee-Sarkar-2025, Mahanta-Mukherjee-Winkert-2025, Mahanta-Winkert-2026, Yang-2012@} and the references therein. 

Let us introduce some notation to familiarize ourselves with higher-order Sobolev spaces. For any $m\in\mathbb{N}$ and $u\in C^m$, the class of $m$-th order differentiable function, we denote $\nabla^m$ as the $m$-$\text{th}$ order gradient operator, which is given by 
$$   \nabla^m u:=\begin{cases}
        \Delta^{\frac{m}{2}}u\quad \quad \quad\text{if}~m~\text{is even},\\
        \nabla \Delta^{\frac{m-1}{2}}u~\quad\text{if}~m~\text{is odd}.  \end{cases} $$
Moreover, for $m<n$, the Sobolev space with homogeneous Navier boundary condition on $\Omega\subset\mathbb{R}^n$ is denoted by $W^{m,\frac{n}{m}}_{\mathcal{N}}(\Omega)$ such that
$$
 W^{m,\frac{n}{m}}_{\mathcal{N}}(\Omega):=\left\{u\in   W^{m,\frac{n}{m}}(\Omega)\colon\,\Delta^j u_{|\partial\Omega}=0~\text{in the sense of trace,}~0\leq j\leq \left[\frac{m-1}{2}\right]\right\},  
$$
where $[\cdot]$ stands for the greatest integer function (see Tarsi \cite{Tarsi-2012}). In addition, we can notice that $W^{m,\frac{n}{m}}_0(\Omega)\subsetneq W^{m,\frac{n}{m}}_{\mathcal{N}}(\Omega)$ and the space $W^{2,\frac{n}{2}}_{\mathcal{N}}(\Omega)$ can be explicitly defined by
\begin{align*}
   W^{2,\frac{n}{2}}_{\mathcal{N}}(\Omega):=W_0^{1,\frac{n}{2}}(\Omega)\cap W^{2,\frac{n}{2}}(\Omega),~\forall~n>2.
\end{align*}

In the literature, it was Adams \cite{Adams-1988}, who first studied the Moser-Trudinger inequality that appears in \eqref{eq1.3} into higher-order Sobolev spaces. He established that for $m\in\mathbb{N}$ with $m<n$ and $\Omega $ is any bounded domain in $\mathbb{R}^n$, then
 \begin{align}\label{eq1.5}
    \sup_{u\in W_0^{m,\frac{n}{m}}(\Omega)_,~\|\nabla^m u\|_{L^\frac{n}{m}(\Omega)}~\leq 1}~\int_{\Omega} \operatorname{\exp}(\beta|u|^{\frac{n}{n-m}})\,\mathrm{d}x\leq C(n,m)|\Omega|,~\forall~~\beta\leq \beta(n,m), 
 \end{align}
 where
 $$ \beta(n,m):=\frac{n}{\omega_{n-1}}\begin{cases}
        \bigg[\frac{\pi^{\frac{n}{2}}2^m\Gamma\big(\frac{m+1}{2}\big)}{\Gamma\big(\frac{n-m+1}{2}\big)}\bigg]^{\frac{n}{n-m}}\quad\text{if}~m~\text{is odd},\\
        \bigg[\frac{\pi^{\frac{n}{2}}2^m\Gamma\big(\frac{m}{2}\big)}{\Gamma\big(\frac{n-m}{2}\big)}\bigg]^{\frac{n}{n-m}}\quad \quad \text{if}~m~\text{is even}.
         \end{cases} $$
Despite this, the constant $\beta(n,m)$ is optimal in the sense that the supremum \eqref{eq1.5} will become infinite if $\beta>\beta(n,m)$. Moreover, (thanks to \cite{Tarsi-2012}) Adams' inequality is also valid whenever $W^{m,\frac{n}{m}}(\Omega)$ is replaced by the larger space $ W^{m,\frac{n}{m}}_{\mathcal{N}}(\Omega)$. Subsequently, we remark here that the above Adams' inequality \eqref{eq1.5} has been extended in many directions. For example, in the bounded domain, the following inequality with singular weight has been proved in \cite[Lam-Lu]{Lam-Lu-2012@@@}, which states that
 \begin{align}\label{eq1.6}
    \sup_{u\in W_0^{m,\frac{n}{m}}(\Omega),~\|\nabla^m u\|_{L^\frac{n}{m}(\Omega)}~\leq 1}~\int_{\Omega} \frac{\operatorname{\exp}(\beta|u|^{\frac{n}{n-m}})}{|x|^\alpha}\,\mathrm{d}x<+\infty
 \end{align}
 for all $0\leq \beta\leq \beta_{\alpha,n,m}=(1-\frac{\alpha}{n})\beta(n,m)$ and $\alpha\in[0,n)$. In addition, $\beta_{\alpha,n,m}$ is sharp, that is, the supremum in \eqref{eq1.6} is infinite if $\beta>\beta_{\alpha,n,m}$. It was also noted that if $m$ is an even integer, then this inequality is also true for the Sobolev space $ W^{m,\frac{n}{m}}_{\mathcal{N}}(\Omega)$. Next, for any $u\in  W^{m,\frac{n}{m}}(\mathbb{R}^n)$, we define 
 $$
   \| u\|_{m,\frac{n}{m}}:=\begin{cases}
       \|(I-\Delta)^{\frac{m}{2}}u\|_{\frac{n}{m}}\quad \quad \quad \quad \quad \quad \quad \quad\quad \quad \quad \quad \quad \quad \qquad\text{if}~m~\text{is even},\\
       \Big( \|(I-\Delta)^{\frac{m-1}{2}}u\|^{\frac{n}{m}}_{\frac{n}{m}}+\|\nabla(I-\Delta)^{\frac{m-1}{2}}u\|^{\frac{n}{m}}_{\frac{n}{m}}\Big)^{\frac{m}{n}}\quad \quad\qquad\text{if}~m~\text{is odd},
   \end{cases}  
$$
where $\|\cdot\|_p$ denotes $L^p(\mathbb{R}^n)$ norm for $p\in[1,+\infty)$. Then the related inequalities corresponding to \eqref{eq1.5} when $\Omega=\mathbb{R}^n$ have been first proposed by \cite[Ruf-Sani]{Ruf-Sani-2013} for any positive even integer $m$. After that, it was again generalized in \cite[Lam-Lu]{Lam-Lu-2012@@} for any arbitrary integer $m\geq 1$. Their combined results can be stated as follows
  \begin{align}\label{eq1.7}
    \sup_{u\in W^{m,\frac{n}{m}}(\mathbb{R}^n),~\| u\|_{m,\frac{n}{m}}~\leq 1}~\int_{\mathbb{R}^n} \Phi(\beta(n,m)|u|^{\frac{n}{n-m}})\,\mathrm{d}x<+\infty,
 \end{align}
 where 
 $$ \Phi(t):=\operatorname{exp}(t)-\sum_{j=0}^{j_{\frac{n}{m}}-2}\frac{t^j}{j!}\quad\text{with}\quad j_{\frac{n}{m}}:=\min\Big\{j\in\mathbb{N}:~j\geq \frac{n}{m}\Big\}.$$
In addition, the constant $\beta(n,m)$ in the above supremum is sharp; that is, the supremum in \eqref{eq1.7} is infinite if we replace $\beta(n,m)$ by any $\beta>\beta(n,m)$. Further improvement of Adams' inequalities \eqref{eq1.7} has been studied in \cite{Zhang-Li-Chen-2020} and has shown that \eqref{eq1.7} still holds under the constraint
$$ \left\{u\in W^{m,\frac{n}{m}}(\mathbb{R}^n):~\|\nabla^m u\|^{\frac{n}{m}}_{\frac{n}{m}}+\tau \|u\|^{\frac{n}{m}}_{\frac{n}{m}}\leq 1~\text{for all}~\tau>0\right\}.$$
However, they did not sharpen it, and we believe it can be sharpened by using a similar strategy that used in \cite[Theorem 1.1]{Nguyen-2019}.

It was Yang \cite{Yang-2012} who first extended the results in \cite{Ruf-Sani-2013} and proved a version of the subcritical singular Adams' inequality in dimension four by introducing the Sobolev full norm. Basically, he established that 
 \begin{align}\label{eq1.8}
    \sup_{u\in W^{2,2}(\mathbb{R}^4),~\int_{\mathbb{R}^4}(|\Delta u|^2+\tau|\nabla u|^2+\sigma|u|^2)~\mathrm{d}x~\leq 1}~\int_{\mathbb{R}^4} \frac{\operatorname{\exp}(\alpha u^2)-1}{|x|^\beta}\,\mathrm{d}x<+\infty,~ \forall~\alpha\in\left(0,32\pi^2\left(1-\frac{\beta}{4}\right)\right)
 \end{align}
 for all constants $\tau,\sigma>0$ and $\beta\in[0,4)$. Furthermore, when $\alpha>32\pi^2\big(1-\frac{\beta}{4}\big)$, the supremum \eqref{eq1.8} is no longer finite. Note that it has remained an open question for the readers whether the above inequality \eqref{eq1.8} still holds for the critical case, that is, $\alpha=32\pi^2\big(1-\frac{\beta}{4}\big)$.  Then again, the singular Adams' inequality \eqref{eq1.6} was fully extended to $\mathbb{R}^n$ if the Dirichlet norm $\|\nabla^m \cdot\|_{L^\frac{n}{m}(\Omega)}$ is replaced by $\|(\tau I-\Delta)^{\frac{m}{2}}\cdot\|_{\frac{n}{m}}$ for any $\tau>0$ (see \cite{Lam-Lu-2013}). More precisely, they proved that 
 \begin{align}\label{eq1.9}
   \sup_{u\in W^{m,\frac{n}{m}}(\mathbb{R}^n),~\|(\tau I-\Delta)^{\frac{m}{2}}u\|_{\frac{n}{m}}~\leq 1}~\int_{\mathbb{R}^n} \frac{\Phi(\beta|u|^{\frac{n}{n-m}})}{|x|^\alpha}\,\mathrm{d}x<+\infty, ~\forall~\beta\leq \beta_{\alpha,n,m},
 \end{align}
for all $m\in\mathbb{N}$ with $m<n$ and $\alpha\in[0,n)$. Consequently, if $\beta>\beta_{\alpha,n,m}$, then the supremum \eqref{eq1.9} is no longer finite. Note that in the second-order Sobolev space $W^{2,\frac{n}{2}}(\mathbb{R}^n)$ with $n\geq 4$, the sharp singular Adams' inequality was investigated in \cite{Lam-Lu-2013} under the following constraint
\begin{align*}
 \left\{u\in W^{2,\frac{n}{2}}(\mathbb{R}^n):~\|\Delta u\|^{\frac{n}{2}}_{\frac{n}{2}}+\tau \|u\|^{\frac{n}{2}}_{\frac{n}{2}}\leq 1~\text{for all}~\tau>0\right\}.   
\end{align*}
Later, this result was again generalized in \cite[Theorem 1.1]{Zhang-Li-Chen-2020} into the arbitrary Sobolev space $ W^{m,\frac{n}{m}}(\mathbb{R}^n)$. Explicitly, they proved a strengthened version of the singular Adams' inequality as follows: let $\tau>0$, then 
 \begin{align}\label{eq1.10}
   \sup_{u\in W^{m,\frac{n}{m}}(\mathbb{R}^n),~\|\nabla^m u\|^{\frac{n}{m}}_{\frac{n}{m}}+\tau \|u\|^{\frac{n}{m}}_{\frac{n}{m}}~\leq 1}~\int_{\mathbb{R}^n} \frac{\Phi(\beta|u|^{\frac{n}{n-m}})}{|x|^\alpha}\,\mathrm{d}x<+\infty, ~\forall~\beta\leq  \beta_{\alpha,n,m},
 \end{align}
for all $m\in\mathbb{N}$ with $m<n$ and $\alpha\in[0,n)$. It has been observed that the inequality \eqref{eq1.10} was not sharpened yet. Moreover, the existence and nonexistence of extremal functions for sharp Adams' inequalities can be found in some recent articles (see \cite{Lu-Yang-2009, Chen-Lu-Zhu-2022, Chen-Lu-Zhu-2020, Zhang-Chen-2023}). 

In recent years, Adams' type inequalities have been widely studied by many authors across diverse domains such as Hyperbolic spaces, Lorentz spaces, CR spheres, compact Riemannian manifolds, and so on. In this context, we cite some delightful works captured in \cite{Karmakar-Sandeep-2016, Fontana-Morpurgo-2011, Fontana-1993} and the references therein. For applications of Adams' inequalities in the analysis of elliptic PDEs, we also refer to some notable articles \cite{do O-Macedo-2015, Chen-Li-Lu-Zhang-2018, Bao-Lam-Lu-2016}. 

It is worth mentioning that the concentration-compactness principle was first studied in higher-order Sobolev spaces in \cite[Theorem 1]{do O-Macedo-2014}. Later, on exploiting the Hilbert structure of $W^{2,2}(\mathbb{R}^4)$, the authors \cite{Chen-Li-Lu-Zhang-2018} established a sharp singular concentration-compactness principle and, as an application, they studied a class of biharmonic equations in dimension four. Then, it was fully extended to the entire space $\mathbb{R}^n$ in \cite{Nguyen-2019} and this result can be read as follows: let $u_k\rightharpoonup u$ weakly in $W^{m,\frac{n}{m}}(\mathbb{R}^n)$ with $u\neq 0$, $\|u_k\|_{m,\frac{n}{m}}\leq 1$ and 
$$1<\wp<Q_{n,m}(u):=\Big(1-\|u\|^{\frac{n}{m}}_{m,\frac{n}{m}}\Big)^{-\frac{m}{n-m}}\quad (~Q_{n,m}(u)=+\infty \quad\text{when}\quad\|u\|_{m,\frac{n}{m}}=1), $$ then it holds that
$$\sup_{k\in\mathbb{N}} \int_{\mathbb{R}^n} \Phi(\wp\beta(n,m)|u_k|^{\frac{n}{n-m}})\,\mathrm{d}x<+\infty. $$
In addition, the author also proved that the above result holds whenever $\|\cdot\|_{m,\frac{n}{m}}$ is replaced by the Ruf norm on $W^{m,\frac{n}{m}}(\mathbb{R}^n)$, which is defined by
$$ \|u\|_{\text{Ruf}}:=\bigg(\|u\|^{\frac{n}{m}}_{\frac{n}{m}}+\|\nabla^m u\|^{\frac{n}{m}}_{\frac{n}{m}}\bigg)^{\frac{m}{n}},~\forall~u\in W^{m,\frac{n}{m}}(\mathbb{R}^n). $$

Moreover, the asymptotic behavior of the supremum associated with subcritical sharp Moser-Trudinger inequalities in $\mathbb{R}^n$, under various functional settings, has been investigated in \cite{Lam-Lu-Zhang-2019@, Lam-Lu-Zhang-2019, Tang-2020}. These works also established the equivalence between the critical and subcritical sharp Moser-Trudinger inequalities. Related higher-order counterparts in $\mathbb{R}^n$ were obtained in \cite{Lam-Lu-Zhang-2017}.

Let $n\geq4$ and $1<p<\frac{n}{2}$ be hold. Next, we define our working function space $E$ as follows
$$ E:=\left\{u\in L^1_{\textit{loc}}(\mathbb{R}^n)\colon~\int_{\mathbb{R}^n}|\Delta u|^p\,\mathrm{d}x<+\infty,~\int_{\mathbb{R}^n}|\Delta u|^{\frac{n}{2}}\,\mathrm{d}x<+\infty\right\}, $$
 which is the completion of $C_0^\infty(\mathbb{R}^n)$ with respect to the following norm 
$$ \|u\|:=\Big(\|\Delta u\|^{\frac{n}{2}}_{\frac{n}{2}}+\|\Delta u\|^{\frac{n}{2}}_p\Big)^{\frac{2}{n}},~\forall~u\in E, $$
where $\|\cdot\|_s$ standards $L^s(\mathbb{R}^n)$ norm for $s\in[1,+\infty]$. In addition, one can observe that if we define another norm $$\|u\|_E:=\|\Delta u\|_{\frac{n}{2}}+\|\Delta u\|_p,~\forall~u\in E, $$ then $\|\cdot\|$ and $\|\cdot\|_E$ are equivalent norms for the space $E$. Indeed, we have the following relation
\begin{align}\label{eq1.99}
\|u\|\leq \|u\|_E\leq 2^{\frac{n-2}{n}}\|u\|,~\forall~u\in E.    
\end{align}
By using standard arguments, we remark that $(E,\|\cdot\|_E)$ is a uniform convex, reflexive, and separable Banach space. Moreover, since reflexivity is preserved under equivalent norms, we infer that $(E,\|\cdot\|)$ is also a reflexive Banach space. Next, we define the space $D^{2,p}(\mathbb{R}^n)$ as the completion of $C_0^\infty(\mathbb{R}^n)$ equipped with the norm 
$$ \|u\|_{D^{2,p}}:=\|\Delta u\|_p,~\forall~u\in D^{2,p}(\mathbb{R}^n). $$
Due to the Sobolev embedding, one can see that the chain of embeddings $E\hookrightarrow D^{2,p}(\mathbb{R}^n)\hookrightarrow L^{p^\ast}(\mathbb{R}^n)$ holds. Now, for any $\gamma\in [0,n)$, we define the space $L^r(\mathbb{R}^n,|x|^{-\gamma} \mathrm{d}x)$, consisting of real-valued measurable function $u$ satisfying $|u|^r |x|^{-\gamma}\in L^1(\mathbb{R}^n)$, equipped with the norm
$$ \|u\|_{r,\gamma}:=\bigg(\int_{\mathbb{R}^n} |u|^r |x|^{-\gamma}\,\mathrm{d}x\bigg)^{\frac{1}{r}}.$$
Throughout this article, we have the following notations: 
\begin{itemize}
\item $(E^*,\|\cdot\|_{*})$, $\langle{\cdot,\cdot}\rangle$ denotes the continuous dual of $(E,\|\cdot\|)$, 
\item $\langle \cdot, \cdot \rangle$ denotes the duality order pair between $E^\ast$ and $E$,
\item $o_k(1)$ denotes that the real sequence converges to zero as $k\to\infty$, 
\item $\rightharpoonup $ and $\rightarrow$ denote the weak convergence and the strong convergence,
\item $B_r(y)$ denotes an open ball centered at $y\in \mathbb{R}^n$ with radius $r>0$ and $B_r=B_r(0)$,
\item $f\lesssim ~( \gtrsim)~ g$, which means $f\leq ( \geq)~C g$ for some suitable constant $C>0$. We shall also write $f\sim g$ to denote that $f\lesssim  g$ and $f \gtrsim g$,
\item $f\underset{\approx}{<}g$ means $f$ is close enough to $g$, $f\thickapprox g$ stands for $f$ is approximately equal to function $g$ and $O(f)$ means some constant multiple of $f$,
\item $|A|$ represents $n$-dimensional Lebesgue measure for $A\subset\mathbb{R}^n$; and its complement by $A^c$. 
\end{itemize}
Briefly speaking, as our interest is to study a class of $(p,\frac{n}{2})$-biharmonic equations in the space $E$, it should be pointed out that, unfortunately, we cannot use the singular Adams' inequality that appeared in \cite[Theorem 1.5]{Lam-Lu-2013}. Thus, motivated by the above-mentioned works and primarily inspired by the works in \cite{Yang-2012, Lam-Lu-2013, Nguyen-2019, Chen-Li-Lu-Zhang-2018, Carvalho-Figueiredo-Furtado-Medeiros-2021, Aouaoui-2025, Lam-Lu-Zhang-2017}, we establish a new version of Adams' inequality in space $E$, its concentration-compactness principle, and as a byproduct of these inequalities, we consider a class of fourth-order elliptic PDEs involving $(p,\frac{n}{2})$-biharmonic operator in the entire space $\mathbb{R}^n$. For this, we first define the Young function $\Phi_{\alpha,j_0}:\mathbb{R}\to [0,+\infty)$  by
\begin{align}\label{eq1.55}
 \Phi_{\alpha,j_0}(s):=\operatorname{exp}\big(\alpha|s|^{\frac{n}{n-2}}\big)-\sum_{j=0}^{j_0-1} \frac{\alpha^j|s|^{\frac{nj}{n-2}}}{j!},~\forall~s\in\mathbb{R}\quad\text{and}\quad\alpha\in[0,+\infty)  \end{align}
with \begin{align*}j_0:=\left\lceil \frac{p^\ast (n-2)}{n} \right\rceil=\min\left\{j\in\mathbb{N}\colon\,j\geq \frac{p^\ast (n-2)}{n}\right\},    
\end{align*}
where $\left\lceil \cdot \right\rceil$ stands for the ceiling function. Furthermore, we emphasized that our work examines the more generalized singular Adams' type inequality, which includes the nonsingular situation, that is, $\gamma\in[0,n)$. It can be interpreted as follows.
\begin{theorem}[\textbf{Sharp Singular Adams' Type Inequality}] \label{thm1.1}
Let $n\geq4$, $1<p<\frac{n}{2}$ and $0\leq \gamma<n$ be hold. Then for all $0\leq\alpha\leq \beta_{\gamma,n}:=\big(1-\frac{\gamma}{n}\big)\beta(n,2)$ and $u\in E$, there holds
\begin{align}\label{eq1.12}
    \sup_{u\in E,\,\|u\|\leq 1}\int_{\mathbb{R}^n}\frac{\Phi_{\alpha,j_0}(u)}{|x|^\gamma}\,\mathrm{d}x<+\infty,
\end{align}
where $\Phi_{\alpha,j_0}(\cdot)$ is defined in \eqref{eq1.55} and $\beta(n,2)=n\big[(n-2)\omega^{\frac{2}{n}}_{n-1}\big]^{\frac{n}{n-2}}$ with $\omega_{n-1}$ stands for the $(n-1)$-dimensional surface measure of the unit sphere $\mathbb{S}^{n-1}$ in $\mathbb{R}^n$. Further, the constant $\beta_{\gamma,n}$ is sharp, that is, if $\alpha>\beta_{\gamma,n}$, then the supremum \eqref{eq1.12} is infinite.
\end{theorem}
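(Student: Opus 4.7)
The plan is to separate the contributions of small and large values of $u$, relying on the subcritical weighted embedding $E \hookrightarrow L^{p^{\ast}}(\mathbb{R}^{n}, |x|^{-\gamma}\,\mathrm{d}x)$ (from Theorem~\ref{thm1.4}) for the former and on a known global sharp Adams' inequality for the latter. Fix $u \in E$ with $\|u\| \leq 1$, so that $\|\Delta u\|_{n/2} \leq 1$ and $\|\Delta u\|_{p} \leq 1$, and decompose
\begin{equation*}
\int_{\mathbb{R}^{n}} \frac{\Phi_{\alpha, j_{0}}(u)}{|x|^{\gamma}}\,\mathrm{d}x = \int_{\{|u| \leq 1\}} \frac{\Phi_{\alpha, j_{0}}(u)}{|x|^{\gamma}}\,\mathrm{d}x + \int_{\{|u| > 1\}} \frac{\Phi_{\alpha, j_{0}}(u)}{|x|^{\gamma}}\,\mathrm{d}x.
\end{equation*}
The definition $j_{0} = \lceil p^{\ast}(n-2)/n \rceil$ is calibrated precisely so that the lowest surviving monomial $|u|^{nj_{0}/(n-2)}$ in $\Phi_{\alpha, j_{0}}$ has degree at least $p^{\ast}$, which is the structural reason for this splitting.

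On $\{|u| \leq 1\}$, every surviving term of the series defining $\Phi_{\alpha, j_{0}}$ is dominated by a constant multiple of $|u|^{p^{\ast}}$, so $\Phi_{\alpha, j_{0}}(u) \leq C_{\alpha} |u|^{p^{\ast}}$ on this set; the weighted embedding $E \hookrightarrow L^{p^{\ast}}(\mathbb{R}^{n}, |x|^{-\gamma}\,\mathrm{d}x)$ then yields a uniform bound for this piece. On $\{|u| > 1\}$, the elementary estimate $\Phi_{\alpha, j_{0}}(u) \leq \exp(\alpha |u|^{n/(n-2)})$ reduces the task to
\begin{equation*}
\int_{\{|u| > 1\}} \frac{\exp(\alpha |u|^{n/(n-2)})}{|x|^{\gamma}}\,\mathrm{d}x \leq C,
\end{equation*}
for which I would invoke the sharp singular Adams' inequality \eqref{eq1.10} of Zhang--Li--Chen in $W^{2, n/2}(\mathbb{R}^{n})$. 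To satisfy the Ruf-type hypothesis of \eqref{eq1.10}, I plan to transfer the mixed-scale bound $\|\Delta u\|_{n/2}, \|\Delta u\|_{p} \leq 1$ into a control of the form $\|\Delta u\|_{n/2}^{n/2} + \tau \|u\|_{n/2}^{n/2} \leq K$: the extra $L^{n/2}$-piece is extracted from the subcritical bound through the second-order Sobolev embedding $\|u\|_{p^{\ast}} \leq C \|\Delta u\|_{p}$ (valid since $2p < n$), combined with H\"older's inequality on the finite-measure set $\{|u| > 1\}$ when $p^{\ast} \geq n/2$, and with a Gagliardo--Nirenberg interpolation between $L^{p^{\ast}}$ and the borderline Moser--Trudinger integrability of $u$ when $p^{\ast} < n/2$.

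For the sharpness of $\beta_{\gamma, n}$, I would test the inequality against the Adams--Moser radial profiles $u_{k}$ supported in $B_{1}$ (adapted from the construction in Ruf--Sani \cite{MR2995369}), normalized so that $\|u_{k}\| \to 1$, and show that $\int \Phi_{\alpha, j_{0}}(u_{k})/|x|^{\gamma}\,\mathrm{d}x \to +\infty$ whenever $\alpha > \beta_{\gamma, n}$; the correction factor $(1 - \gamma/n)$ originates from integrating $|x|^{-\gamma}$ in polar coordinates against the Adams profile. The main obstacle will be the transfer step: since the P\'olya--Szeg\H{o} principle fails in the higher-order setting (remark~(a) of the introduction), one cannot simply reduce to radial $u$, and the lower-order $L^{n/2}$-control on $u$ must be argued carefully from the structure of the space $E$ in order to apply \eqref{eq1.10} with the optimal exponent.
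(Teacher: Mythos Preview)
Your treatment of the region $\{|u|\leq 1\}$ is correct and matches the paper's argument for $J_2$. The sharpness argument via Adams--Moser profiles is also essentially the paper's approach. The genuine gap is in the ``transfer step'' on $\{|u|>1\}$.

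To invoke \eqref{eq1.10} at the critical exponent $\beta_{\gamma,n}$ you would need, for some fixed $\tau>0$,
\[
\|\Delta u\|_{n/2}^{n/2}+\tau\|u\|_{n/2}^{n/2}\le 1
\quad\text{whenever}\quad
\|\Delta u\|_{n/2}^{n/2}+\|\Delta u\|_{p}^{n/2}\le 1,
\]
i.e.\ $\tau\|u\|_{n/2}^{n/2}\le \|\Delta u\|_{p}^{n/2}$. No such inequality holds: the Sobolev embedding from $\Delta u\in L^{p}$ gives $u\in L^{p^{\ast}}$, not $L^{n/2}$, and for $p>n/4$ one has $p^{\ast}>n/2$ so functions in $E$ need not lie in $L^{n/2}(\mathbb{R}^n)$ at all (hence not in $W^{2,n/2}(\mathbb{R}^n)$, so \eqref{eq1.10} does not even apply). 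For $p<n/4$ one can interpolate to get $u\in L^{n/2}$, but the resulting bound depends on $\|\Delta u\|_{n/2}$ as well, so you only obtain $\|\Delta u\|_{n/2}^{n/2}+\tau\|u\|_{n/2}^{n/2}\le K$ with some $K>1$; rescaling to meet the unit constraint of \eqref{eq1.10} then costs a factor $K^{2/(n-2)}$ in the exponent and you lose the sharp constant $\beta_{\gamma,n}$. Restricting to $\{|u|>1\}$ and using H\"older there does not help, because \eqref{eq1.10} is a statement about functions on all of $\mathbb{R}^n$, not about truncations. Your Gagliardo--Nirenberg/Moser--Trudinger interpolation suggestion is circular: it presupposes the exponential integrability you are trying to prove.

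The paper avoids this obstruction by \emph{not} reducing to a global Ruf-type inequality. Instead it sets $H(u):=2^{-4/(n(n-2))}\|\Delta u\|_{p}$ and works on the superlevel set $S=\{u>H(u)\}$, which has uniformly bounded measure by the Sobolev embedding $E\hookrightarrow L^{p^{\ast}}$. On $S$ the shifted function $w:=u-H(u)$ belongs to $W^{2,n/2}_{\mathcal N}(S)$ with $\Delta w=\Delta u$, and an elementary algebraic manipulation (Young's inequality plus $(1-x)^{\wp}\le 1-\wp x$) converts the budget $\|\Delta u\|_{p}^{n/2}$ into exactly the slack needed so that a rescaled $\xi$ of $w$ satisfies $\|\Delta\xi\|_{L^{n/2}(S)}\le 1$. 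One then applies the \emph{bounded-domain} singular Adams inequality with Navier boundary (Lam--Lu \cite{MR3039840}), which is where the sharp constant $\beta_{\gamma,n}$ is preserved. The key idea you are missing is this localization to a bounded domain: the $L^{p}$-control on $\Delta u$ is used to produce a finite-measure level set, not to manufacture an $L^{n/2}$-bound on $u$.
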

Inspired by the seminal works of do \'{O}-Macedo \cite{do O-Macedo-2014} and Nguyen \cite{Nguyen-2019}, we derive the following sharp singular concentration-compactness principle, which improves upon the preceding singular Adams' inequality.
\begin{theorem}[\textbf{Sharp Singular Concentration-Compactness Principle}] \label{thm1.2}
Let $n\geq4$, $1<p<\frac{n}{2}$ and $0\leq \gamma<n$ be hold. If $\{u_k\}_k\subset E$ is a sequence such that $\|u_k\|=1$ and $u_k\rightharpoonup u\neq 0$ in $E$ as $k\to\infty$, then  for all $\ell$ satisfying $$ 0<\ell<L_n(u):= \big(1-\|u\|^{\frac{n}{2}}\big)^{-\frac{2}{n-2}} \quad\quad (~L_n(u)=+\infty \quad\text{whenever}\quad\|u\|=1),$$
we have 
\begin{align}\label{eq1.13}
\sup_{k\in\mathbb{N}}\int_{\mathbb{R}^n}\frac{\Phi_{\ell\beta_{\gamma,n},j_0}(u_k)}{|x|^\gamma}\,\mathrm{d}x<+\infty.
\end{align}
Finally, the constant $L_n(u)$ is sharp in the sense that the supremum \eqref{eq1.13} is infinite for $\ell> L_n(u)$.
\end{theorem}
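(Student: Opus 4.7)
The approach follows Lions-type concentration-compactness adapted to the non-Hilbertian mixed-norm structure of $E$, in the spirit of Nguyen \cite{MR3959499} and Chen--Li--Lu--Zhang \cite{MR3836127}. Set $v_k := u_k - u$; then $v_k \rightharpoonup 0$ in $E$ and, after extracting a subsequence, $v_k \to 0$ pointwise a.e. The heart of the proof is the mixed-norm splitting
\begin{equation*}
\limsup_{k\to\infty}\|v_k\|^{n/2} \leq 1 - \|u\|^{n/2},
\end{equation*}
which is the substitute for the Brezis--Lieb identity that is directly available in the homogeneous/Hilbert settings.

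To prove the splitting, I apply Brezis--Lieb separately to each summand of $\|u\|^{n/2}=\|\Delta u\|_{n/2}^{n/2}+\|\Delta u\|_p^{n/2}$. The $L^{n/2}$ part yields $\|\Delta u_k\|_{n/2}^{n/2}=\|\Delta u\|_{n/2}^{n/2}+\|\Delta v_k\|_{n/2}^{n/2}+o(1)$ already in the correct power. For the $L^p$ part Brezis--Lieb delivers only a $p$-th power identity $\|\Delta u_k\|_p^p=\|\Delta u\|_p^p+\|\Delta v_k\|_p^p+o(1)$; after passing to a subsequence along which $\|\Delta u_k\|_p^p\to c$, I must compare $(c-\|\Delta u\|_p^p)^{n/(2p)}$ with $c^{n/(2p)}-\|\Delta u\|_p^{n/2}$. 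Crucially, because $p<n/2$ the exponent $r:=n/(2p)$ is strictly greater than $1$, so the elementary superadditivity $(X+Y)^r\geq X^r+Y^r$ applied with $X=c-\|\Delta u\|_p^p$ and $Y=\|\Delta u\|_p^p$ yields $\limsup\|\Delta v_k\|_p^{n/2}\leq \lim\|\Delta u_k\|_p^{n/2}-\|\Delta u\|_p^{n/2}$. Summing the two contributions and using the normalization $\|u_k\|^{n/2}=1$ produces the desired splitting.

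Given the splitting, fix $\ell\in(0,L_n(u))$ and choose $q>1$ close to $1$ and $\varepsilon>0$ small enough that $(1+\varepsilon)q\ell(1-\|u\|^{n/2})^{2/(n-2)}<1$. The pointwise convexity bound $|a+b|^{n/(n-2)}\leq(1+\varepsilon)|b|^{n/(n-2)}+C_\varepsilon|a|^{n/(n-2)}$ (valid since $n/(n-2)>1$), combined with Hölder's inequality in the measure $|x|^{-\gamma}\,dx$, estimates $\int\Phi_{\ell\beta_{\gamma,n},j_0}(u_k)|x|^{-\gamma}\,dx$ by (essentially) a product $I_k(v_k)^{1/q}\cdot I(u)^{1/q'}$, where $I_k(v_k)$ involves $v_k$ with exponent $(1+\varepsilon)q\ell\beta_{\gamma,n}$ and $I(u)$ involves $u$ with a constant $C_\varepsilon q'\ell\beta_{\gamma,n}$. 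Rescaling $w_k:=v_k/\|v_k\|$ (so $\|w_k\|=1$) reduces $I_k(v_k)$ to Theorem \ref{thm1.1} applied to $w_k$ at the strictly subcritical level $(1+\varepsilon)q\ell\|v_k\|^{n/(n-2)}\beta_{\gamma,n}<\beta_{\gamma,n}$, giving a uniform bound; $I(u)$ is a fixed finite quantity because $u\in E$ is fixed. The low-order Taylor remainder inherent in $\Phi$ (the difference between $\Phi_{\ell\beta_{\gamma,n},j_0}$ and the bare exponential) is controlled uniformly via the compact embedding $E\hookrightarrow L^\varrho(\mathbb R^n,|x|^{-\gamma}\,dx)$ for $\varrho\geq p^\ast$ of Theorem \ref{thm1.4}, yielding \eqref{eq1.13}.

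For optimality of $L_n(u)$ I construct a concentrating Moser-type sequence at the origin: fix a radial $u\in E$ with $\|u\|<1$ and set $u_k=u+\mu_k\psi_k$, where $\psi_k$ is a standard Adams--Moser test function supported on a shrinking ball $B_{r_k}$ (of the form used in \cite{MR3836127,MR3959499}) and $\mu_k$ is chosen so $\|u_k\|=1$. A direct calculation, exploiting that the weight $|x|^{-\gamma}$ amplifies the concentration at the origin, shows $\int\Phi_{\ell\beta_{\gamma,n},j_0}(u_k)|x|^{-\gamma}\,dx\to\infty$ whenever $\ell>L_n(u)$. The hardest step throughout is the splitting: the $E$-norm is a non-homogeneous combination of $L^{n/2}$ and $L^p$ norms, so no direct Brezis--Lieb decomposition is available for $\|\cdot\|^{n/2}$, and the argument crucially exploits the hypothesis $p<n/2$ to obtain the superadditivity $(X+Y)^{n/(2p)}\geq X^{n/(2p)}+Y^{n/(2p)}$ at the correct power.
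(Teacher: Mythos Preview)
Your splitting step has a genuine gap. To apply Brezis--Lieb to $\|\Delta u_k\|_{n/2}^{n/2}$ and $\|\Delta u_k\|_p^p$ you need $\Delta u_k\to\Delta u$ a.e.\ in $\mathbb{R}^n$, but weak convergence $u_k\rightharpoonup u$ in $E$ does \emph{not} deliver this. The compact embedding of Lemma~\ref{lem2.5} gives only $u_k\to u$ a.e.\ (which is what you wrote), while $\Delta u_k\rightharpoonup\Delta u$ holds merely weakly in $L^{n/2}$ and in $L^p$; neither yields pointwise convergence of $\Delta u_k$. Without it, the identities $\|\Delta u_k\|_t^t=\|\Delta u\|_t^t+\|\Delta v_k\|_t^t+o(1)$ are unjustified, and your inequality $\limsup_k\|v_k\|^{n/2}\le 1-\|u\|^{n/2}$ collapses. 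In the paper, a.e.\ convergence of Laplacians is obtained only later (Lemma~\ref{lem5.6}) and only for Palais--Smale sequences, via an argument that already \emph{uses} Theorem~\ref{thm1.2}; it is not available for a generic weakly convergent sequence in $E$.

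This is precisely why the paper abandons the $v_k=u_k-u$ route and instead argues by contradiction through Schwarz symmetrization. Writing $-\Delta u_k=g_k$, it passes to the decreasing rearrangements $g_k^\sharp$, uses Lemma~\ref{lem2.2} (do~\'O--Macedo) to extract an a.e.\ limit $h$ on $[0,\infty)$ with $\|h\|_{L^t}\ge\|g\|_t$ (no a.e.\ convergence of $g_k$ itself is needed), and then Lemma~\ref{lem2.3} (Nguyen's comparison inequality for $u_k^\sharp$) together with level-set truncations $H_L$ to squeeze out the contradiction with $\ell<L_n(u)$. The rearrangement machinery is exactly the substitute for the missing Brezis--Lieb decomposition at the level of $\Delta u_k$. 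Your sharpness construction is in the right spirit, but note that the paper chooses $u$ with support in $\{2<|x|<3\}$ so that $\Delta u$ and $\Delta\xi_k$ have \emph{disjoint} supports, which is what makes the exact computation of $\|v_k\|$ and the normalization possible; your sketch does not secure this.
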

The following natural question is still open at this time:
  \[ \text{ Is the above supremum defined as in \eqref{eq1.13} still true for}~ \ell=L_n(u)~?\]      

The next result says about the lower and upper bounds of the subcritical sharp singular Adams' type inequality in the space $E$ asymptotically. In fact, we illustrate the subsequent theorem.
\begin{theorem}[\textbf{Subcritical Sharp Singular Adams' Type Inequality}]\label{thm5.1} Let $n\geq4$, $1<p<\frac{n}{2}$, $0\leq \gamma<n$ and $0\leq \ell<\beta(n,2)$ be hold. Then there holds
$$\text{ATSC}(\ell,\gamma):=\sup_{u\in E,~\|\Delta u\|_{\frac{n}{2}}\leq 1}\cfrac{1}{\|\Delta u\|_p^{p^\ast(1-\frac{\gamma}{n})}}\int_{\mathbb{R}^n}\frac{\Phi_{\ell(1-\frac{\gamma}{n}),j_0}(u)}{|x|^\gamma}\,\mathrm{d}x<+\infty. $$
The constant $\beta(n,2)$ is sharp in the sense that $\text{ATSC}(\beta(n,2),\gamma)=+\infty$. Also, there exists two positive constants $c(\gamma,p,n)$ and $C(\gamma,p,n)$ such that whenever $l\underset{\approx}{<}\beta(n,2)$, there holds
$$ \cfrac{c(\gamma,p,n)}{\Bigg(1-\bigg(\frac{\ell}{\beta(n,2)}\bigg)^{\frac{n-2}{2}}\Bigg)^{\frac{2p^\ast}{n}\big(1-\frac{\gamma}{n}\big)}}\leq \text{ATSC}(\ell,\gamma)\leq \cfrac{C(\gamma,p,n)}{\Bigg(1-\bigg(\frac{\ell}{\beta(n,2)}\bigg)^{\frac{n-2}{2}}\Bigg)^{\frac{2p^\ast}{n}\big(1-\frac{\gamma}{n}\big)}}.$$    
\end{theorem}
In the following theorem, we first describe a necessary and sufficient condition for the boundedness of the critical sharp singular Adams' inequality in the space $E$. Finally, we shall demonstrate the equivalence of critical and subcritical sharp singular Adams’ inequalities.
\begin{theorem}[\textbf{Critical Sharp Singular Adams' Type Inequality}]\label{thm5.22} Let $n\geq4$, $1<p<\frac{n}{2}$, $0\leq \gamma<n$ and $a,b>0$ be hold. Then there holds
$$\text{ATC}_{a,b}(\gamma):=\sup_{u\in E,~\|\Delta u\|^a_{\frac{n}{2}}+\|\Delta u\|^b_p\leq 1}\int_{\mathbb{R}^n}\frac{\Phi_{\beta_{\gamma,n},j_0}(u)}{|x|^\gamma}\,\mathrm{d}x<+\infty\iff b\leq\frac{n}{2}. $$
The constant $\beta(n,2)$ is sharp in the sense that the above supremum is infinite when $\beta(n,2)$ is replaced by a larger constant. In addition, the following identity holds
\begin{align}\label{eq1.15}
  \text{ATC}_{a,b}(\gamma)=\sup_{\ell\in(0,\beta(n,2))}\Bigg(\cfrac{1-\big(\frac{\ell}{\beta(n,2)}\big)^{(\frac{n-2}{n})a}}{\big(\frac{\ell}{\beta(n,2)}\big)^{(\frac{n-2}{n})b}}\Bigg)^{\frac{p^\ast}{b}\big(1-\frac{\gamma}{n}\big)}\text{ATSC}(\ell,\gamma).  
\end{align}
Moreover, there hold $\text{ATC}_{\frac{n}{2},\frac{n}{2}}(\gamma)<+\infty$ and
$$\text{ATC}_{\frac{n}{2},\frac{n}{2}}(\gamma)=\sup_{\ell\in(0,\beta(n,2))}\Bigg(\cfrac{1-\big(\frac{\ell}{\beta(n,2)}\big)^{\frac{n-2}{2}}}{\big(\frac{\ell}{\beta(n,2)}\big)^{\frac{n-2}{2}}} \Bigg)^{\frac{2p^\ast}{n}\big(1-\frac{\gamma}{n}\big)}\text{ATSC}(\ell,\gamma). $$
\end{theorem}
Prior to our investigation, we remark that the compact embedding that is explained below is crucial in our analysis. The following is the interpretation of its statement.
\begin{theorem}\label{thm1.4}
Let $n\geq 4$, $1<p<\frac{n}{2}$ and $\gamma\in(0,n)$ be hold. Then the embedding $E\hookrightarrow L^\varrho(\mathbb{R}^n,|x|^{-\gamma}\mathrm{d}x)$ is compact for all $\varrho\geq p^\ast$, where $p^\ast:=\frac{np}{n-2p}$, the so-called critical Sobolev exponent.
\end{theorem}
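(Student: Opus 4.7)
The plan is to combine the family of continuous embeddings $E \hookrightarrow L^r(\mathbb{R}^n)$ for every $r \in [p^\ast,\infty)$ with a three-region cut-off argument tuned to the Hardy-type weight $|x|^{-\gamma}$. For the continuous embedding, I observe that $\Delta u \in L^p \cap L^{\frac{n}{2}}$ interpolates to $\Delta u \in L^q$ for every $q \in [p, \frac{n}{2}]$ via $\|\Delta u\|_q \leq \|\Delta u\|_p^{\theta}\|\Delta u\|_{\frac{n}{2}}^{1-\theta}$, and then invoke the representation $u = c_n I_2 \ast \Delta u$ (valid on $C_0^\infty(\mathbb{R}^n)$ and extended by density using $E\subset L^1_{\mathrm{loc}}$) together with the Hardy--Littlewood--Sobolev inequality to get $\|u\|_{q^\ast} \lesssim \|\Delta u\|_q$; letting $q \uparrow \frac{n}{2}$ sends the Sobolev conjugate $q^\ast$ to $\infty$. (As a cross-check, the already proved Theorem \ref{thm1.1} also gives $u \in L^r$ for every $r \geq \frac{nj_0}{n-2} \geq p^\ast$.)

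Next, given a bounded sequence $\{u_k\}\subset E$, I pass to a subsequence with $u_k\rightharpoonup u$ by reflexivity and set $v_k:=u_k-u$, so that $\{v_k\}$ is uniformly bounded in $L^r(\mathbb{R}^n)$ for every $r\in[p^\ast,\infty)$. Splitting $\mathbb{R}^n = B_\delta \cup (B_R\setminus B_\delta) \cup B_R^c$, I estimate the three pieces separately. On $B_\delta$, Hölder with an exponent $\sigma_1 \in (1, n/\gamma)$ (available since $\gamma<n$) gives
\begin{align*}
\int_{B_\delta} |v_k|^\varrho |x|^{-\gamma}\,\mathrm{d}x \leq \Big(\int_{B_\delta} |x|^{-\sigma_1\gamma}\,\mathrm{d}x\Big)^{1/\sigma_1}\|v_k\|^{\varrho}_{L^{\varrho\sigma_1'}(B_\delta)} \leq C\,\delta^{(n-\sigma_1\gamma)/\sigma_1},
\end{align*}
uniformly in $k$ and vanishing as $\delta\to 0$. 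Symmetrically, on $B_R^c$ I pick $\sigma_2>n/\gamma$ (available since $\gamma>0$) so that $\int_{B_R^c}|x|^{-\sigma_2\gamma}\,\mathrm{d}x = O(R^{n-\sigma_2\gamma}) \to 0$ as $R\to\infty$, with the other factor again uniformly bounded.

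The core of the argument is strong $L^\varrho$-convergence on the bounded annulus $\Omega_{\delta,R}:=B_R\setminus B_\delta$, where the weight is harmless because $|x|^{-\gamma}\leq \delta^{-\gamma}$. For the given finite $\varrho$, I select $q\in(p,\frac{n}{2})$ close enough to $\frac{n}{2}$ that the Sobolev conjugate satisfies $q^\ast > \varrho$; interpolation supplies boundedness of $\Delta v_k$ in $L^q$, Calderón--Zygmund theory applied locally controls $\nabla^2 v_k$ in $L^q(\Omega_{\delta,R})$, and the uniform $L^{p^\ast}$-bound gives $L^q$-boundedness of $v_k$ itself on $\Omega_{\delta,R}$. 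Hence $\{v_k\}$ is bounded in $W^{2,q}(\Omega_{\delta,R})$, and Rellich--Kondrachov yields compact embedding into $L^\varrho(\Omega_{\delta,R})$; the weak limit $v_k\rightharpoonup 0$ then upgrades to strong convergence. Sending $\delta\to 0$, $R\to\infty$, $k\to\infty$ in that order closes the argument. The main technical obstacle is this annulus step: since the norm on $E$ controls only the Laplacian, converting $L^q$-boundedness of $\Delta v_k$ into $W^{2,q}$-boundedness of $v_k$ itself demands elliptic regularity, and one must exploit the full interpolation interval $q\in[p,\frac{n}{2}]$ in order to reach \emph{every} $\varrho\geq p^\ast$.
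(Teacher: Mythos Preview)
Your argument is correct, and the three-region decomposition $B_\delta \cup (B_R\setminus B_\delta) \cup B_R^c$ is the natural one for a singular weight. The paper, however, proceeds differently. It invokes Lemma~\ref{lem2.5} (the compact embedding $E\hookrightarrow L^\theta_{\textit{loc}}(\mathbb{R}^n)$ for all $\theta\geq 1$, taken as a black box from Aouaoui) and then, instead of excising a small ball $B_\delta$ around the origin, applies Egoroff's theorem: it finds a subset $A_\delta\subset B_R$ with $|A_\delta|<\delta$ on whose complement $u_{k_j}\to u$ \emph{uniformly}. The three pieces become $A_\delta$, $B_R\setminus A_\delta$, and $B_R^c$; the small set $A_\delta$ is handled by H\"older using its small \emph{measure}, while on $B_R\setminus A_\delta$ uniform convergence makes the weighted integral trivially small. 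Your route is more self-contained, since you re-derive local compactness via interpolation, Calder\'on--Zygmund, and Rellich--Kondrachov rather than citing Lemma~\ref{lem2.5}; the paper's Egoroff argument avoids elliptic regularity entirely but trades geometric simplicity (concentric balls) for a measure-theoretic bad set. One minor simplification available in your outer-region step: on $B_R^c$ you may simply bound $|x|^{-\gamma}\leq R^{-\gamma}$ and use the uniform $L^\varrho$ bound directly, as the paper does; the H\"older detour with $\sigma_2>n/\gamma$ is unnecessary.
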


Moreover, by the applications of Theorems \ref{thm1.1}, Theorem \ref{thm1.2} and Theorem \ref{thm1.4}, we investigate the existence of nontrivial solutions to a class of $(p,\frac{n}{2})$-biharmonic equations with singular exponential growth in $\mathbb{R}^n$. Specifically, we study the following equation  
\begin{equation}\label{main problem}
   \Delta^2_p u+\Delta^2_{\frac{n}{2}} u=\frac{g(x,u)}{|x|^\gamma}\quad\text{in}\quad \mathbb{R}^n,\tag{$\mathcal{P}$}
\end{equation}
with $n\geq 4$, $1<p<\frac{n}{2}$, $\gamma\in(0,n)$ and $\Delta^2_t~ \cdot:=\Delta(|\Delta \cdot|^{t-2}\Delta ~\cdot)$ is a fourth-order operator, which is known as the standard $t$-biharmonic operator for all $t > 1$. The nonlinearity $g:\mathbb{R}^n\times \mathbb{R}\to \mathbb{R}$ is a Carath\'eodory function and has critical exponential growth at infinity, that is, it behaves like $\exp{(\alpha|s|^{\frac{n}{n-2}})}$ as $|s|\to~+\infty$ for some $\alpha>0$, which means that there exists a positive constant $\alpha_0$ such that 
\begin{align*}
	\lim_{|s|\to +\infty} |g(x,s)| \operatorname{exp}(-\alpha|s|^{\frac{n}{n-2}})=
	\begin{cases}
		0       & \text{if } \alpha>\alpha_0, \\
		+\infty & \text{if }\alpha<\alpha_0, 
	\end{cases}
\end{align*}
uniformly with respect to $x\in \mathbb{R}^n$. This notion of criticality is basically driven by the well-known  Moser-Trudinger type inequality. Throughout this article, without further mention, we assume that $g:\mathbb{R}^n\times \mathbb{R}\to \mathbb{R}$ satisfies the following properties.
 \begin{enumerate}[label={($\bf g{\arabic*}$)}]
\setcounter{enumi}{0}
\item\label{g1} The map $g:\mathbb{R}^n\times \mathbb{R}\to \mathbb{R}$ is a Carath\'eodory function, the map $\cdot\mapsto g(x,\cdot)$ is an odd function and $g(x,0)=0$ for all $x\in \mathbb{R}^n$. In addition, we also have $g(x,s)>0$ for all $(x,s)\in \mathbb{R}^n\times (0,+\infty) $ and $g(x,s)<0$ for all $(x,s)\in \mathbb{R}^n\times (-\infty,0)$.
    \item\label{g2} There exists constant $\tau>\max\{p^\ast,\frac{n}{2}\}$ with $p^\ast:=\frac{np}{n-2p}$, so-called critical Sobolev exponent such that $$\lim_{s\to 0} \frac{|g(x,s)|}{\quad|s|^{\tau-1}}=0\quad\text{uniformly with respect to}~~ x\in \mathbb{R}^n.$$
    \item\label{g3} There exists constant $\alpha_0>0$ such that 
    \begin{align*}
	\lim_{|s|\to +\infty} \frac{|g(x,s)|}{\operatorname{exp}(\alpha|s|^{\frac{n}{n-2}})}=
	\begin{cases}
		0       & \text{if } \alpha>\alpha_0, \\
		+\infty & \text{if }\alpha<\alpha_0, 
	\end{cases}\quad\text{uniformly with respect to}~~ x\in \mathbb{R}^n.
\end{align*}
      \item\label{g4} There exists constant $\mu>\frac{n}{2}$ such that
      $$0<\mu G(x,s):=\mu\int_{0}^{s} g(x,t)\,\mathrm{d}t\leq g(x,s)s\quad\text{ for all}\quad(x,s)\in \mathbb{R}^n\times (\mathbb{R}\setminus\{0\}). $$
       \item\label{g5} There exists positive constants $s_0$ and $M_0$ such that for all $x\in \mathbb{R}^n$, we have 
       $$0< G(x,s)\leq M_0|g(x,s)|\quad\text{for all}\quad |s|\geq s_0. $$
       \item\label{g6} There exists constant $\vartheta>\max\{p^\ast,\frac{n}{2}\}$ and $\lambda>0$ such that 
       $$ G(x,s)\geq \lambda |s|^\vartheta\quad\text{ for all}\quad(x,s)\in\mathbb{R}^n\times \mathbb{R}.$$
       \end{enumerate}

In addition, one can notice that, due to \ref{g2} and \ref{g3}, for any fixed $q>\max\{p^\ast,\frac{n}{2}\}$ and $\alpha>\alpha_0$, there exists $\zeta>0$ and a constant $D_\zeta>0$ depending upon $\zeta,~\alpha$ and $q$ such that 
\begin{align}\label{eq1.1}
    |g(x,s)|\leq \zeta |s|^{\tau-1}+D_\zeta|s|^{q-1}\Phi_{\alpha,j_0}(s)\quad\text{for all}\quad (x,s)\in\mathbb{R}^n\times\mathbb{R}
\end{align}
and by using \ref{g4}, we have
\begin{align}\label{eq1.2}
    |G(x,s)|\leq \zeta |s|^{\tau}+D_\zeta|s|^{q}\Phi_{\alpha,j_0} (s)\quad\text{for all}\quad (x,s)\in\mathbb{R}^n\times\mathbb{R},
\end{align}
where $\Phi_{\alpha,j_0}(\cdot)$ is the Young function defined in \eqref{eq1.55}. The following function provides a concrete example that satisfies all the previously stated hypotheses.
\begin{example} The function $G\colon \mathbb{R}^n\times\mathbb{R}\to \mathbb{R}$ given by $G(x,s):=\lambda|s|^\vartheta \operatorname{exp}\big(\alpha_0|s|^{\frac{n}{n-2}}\big)$ for all $(x,s)\in\mathbb{R}^n\times\mathbb{R},$ where $n\geq 4$, $\lambda>0$, $0<\alpha_0 <\alpha$, $\vartheta>\tau,\mu>\max\{p^\ast,\frac{n}{2}\}$, $g(x,s)=\partial_s G(x,s)$ for all $(x,s)\in \mathbb{R}^n\times (\mathbb{R}\setminus\{0\})$, and $g(x,0)=0$ for all $x\in \mathbb{R}^n$, then $g$ satisfies all the hypotheses \ref{g1}--\ref{g6}. 
\end{example}
Next, we would like to emphasize the important aspects and novelties of this article below:
\begin{itemize}
    \item [(a)] In higher-order Sobolev spaces, the P\'olya–Szeg\"{o} types inequalities don't hold, which causes us to face several barriers in proving the singular Adams' type inequality in $E$ and the associated concentration-compactness principle, as can be shown in Theorem \ref{thm1.1} and Theorem \ref{thm1.2}.
    \item [(b)] Due to the critical exponential growth of the nonlinearity, there is a healthy competition between the nonhomogeneous $(p,\frac{n}{2})$-biharmonic operator and the nonlinear term because the embedding $E\hookrightarrow L_{\Phi_{\alpha,j_0}}(\mathbb{R}^n)$ is not compact for the non-singular case, that is, $\gamma=0$, for instance, one can see to Theorem \ref{thm1.1}, where $\Phi_{\alpha,j_0}(\cdot)$ is the Young function defined in \eqref{eq1.55}. Therefore, proving the compactness of Palais-Smale sequences for the variational energy associated with our main problem becomes a challenge in this situation. Note that a similar type of difficulty also arises for the singular case, that is, $\gamma\neq 0$. To avoid this difficulty, we shall discuss a concentration-compactness principle, that is, Theorem \ref{thm1.2}.
    \item [(c)] The study of the existence of solutions to our main problem relies on the compact embedding $ E \hookrightarrow L^\varrho(\mathbb{R}^n,|x|^{-\gamma} \mathrm{d}x)$ for all $\varrho\geq p^\ast$ and $\gamma\in(0,n)$, thanks to Theorem \ref{thm1.4}. It follows that our method is different from $\gamma=0$. However, we highlighted here that by employing the same method developed in this article and Lemma \ref{lem2.5}, one can study the non-singular case, that is, $\gamma=0$.
    \item [(d)] Owing to the fact that $E$ is not a Hilbert space, for any bounded  Palais-Smale sequence $\{u_k\}_k \subset E$ with $u_k\rightharpoonup u$ in $E$ as $k\to\infty$, we cannot directly get $$|\Delta u_k|^{t-2}\Delta u_k \rightharpoonup |\Delta u|^{t-2}\Delta u\quad\text{in}\quad L^{\frac{t}{t-1}}(\mathbb{R}^n)\quad\text{as}\quad k\to\infty$$ for $t\in\{p,\frac{n}{2}\}$. It indicates that more delicate analysis is required to prove $$\Delta u_k\to \Delta u \quad\text{a.e. in}\quad\mathbb{R}^n\quad\text{as}\quad k\to\infty.$$
     \item [(e)] It is well-known that for any $u\in E$, there is no guarantee that  $|u|,~u^{\pm}\in E$, where $u^{\pm}=\max\{\pm u,0\}$ and thus, we shall unable to get positive solutions to our main problem.
     \item[(f)] Sophisticated methods, such as variational and topological tools, are used in the proofs.
\end{itemize}

The following defines the weak formulation of our main problem. 
\begin{definition}
    We say that $u\in E$ is a weak solution to the problem \eqref{main problem}, if there holds
    $$\int_{\mathbb{R}^n} |\Delta u|^{p-2}\Delta u ~\Delta v~\mathrm{d}x+\int_{\mathbb{R}^n} |\Delta u|^{\frac{n}{2}-2}\Delta u ~\Delta v~\mathrm{d}x=\int_{\mathbb{R}^n} \frac{g(x,u)v}{|x|^\gamma}\,\mathrm{d}x,~\forall~v\in E.$$
\end{definition}
Now, we can state the main result of this article as follows.
\begin{theorem} \label{thm1.5}
Suppose that $1<p<\frac{n}{2}$, $\gamma\in(0,n)$ and $n\geq 4$. Let the hypotheses \ref{g1}--\ref{g6} be satisfied. Moreover, let there exists $\lambda_0$ be sufficiently large enough such that \ref{g6} holds for $\lambda\geq \lambda_0$, then the problem \eqref{main problem} has a nontrivial weak solution.
\end{theorem}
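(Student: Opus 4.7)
The plan is to apply the mountain pass theorem to the energy functional
$$I(u) := \frac{1}{p}\int_{\mathbb{R}^n}|\Delta u|^p\,\mathrm{d}x + \frac{2}{n}\int_{\mathbb{R}^n}|\Delta u|^{\frac{n}{2}}\,\mathrm{d}x - \int_{\mathbb{R}^n}\frac{G(x,u)}{|x|^\gamma}\,\mathrm{d}x,\qquad u\in E,$$
whose critical points coincide with weak solutions of \eqref{main problem}. Using the pointwise bound \eqref{eq1.2} with some $q>\max\{p^\ast,n/2\}$ and $\alpha$ slightly above $\alpha_0$, combining the compact embedding $E\hookrightarrow L^\tau(\mathbb{R}^n,|x|^{-\gamma}\mathrm{d}x)$ of Theorem \ref{thm1.4} (applicable since $\tau>p^\ast$) for the polynomial piece, and H\"older with the singular Adams inequality (Theorem \ref{thm1.1}) for the exponential piece, one checks $I\in C^1(E,\mathbb{R})$ with the expected Gateaux derivative.

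\textbf{Mountain pass geometry.} For $\|u\|\leq 1$, since $p\leq n/2$ and $s^p\geq s^{n/2}$ on $[0,1]$, the leading terms satisfy
$$\tfrac{1}{p}\|\Delta u\|_p^p+\tfrac{2}{n}\|\Delta u\|_{n/2}^{n/2}\;\geq\;\min\!\Big\{\tfrac{1}{p},\tfrac{2}{n}\Big\}\bigl(\|\Delta u\|_p^{n/2}+\|\Delta u\|_{n/2}^{n/2}\bigr)=\min\!\Big\{\tfrac{1}{p},\tfrac{2}{n}\Big\}\|u\|^{n/2},$$
while on a small sphere $\|u\|=\rho$ with $\alpha\rho^{n/(n-2)}<\beta_{\gamma,n}$, combining \eqref{eq1.2}, Theorem \ref{thm1.4}, and Theorem \ref{thm1.1} bounds the nonlinear part by $C_1\rho^\tau+C_2\rho^q$. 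Since $\tau,q>n/2$, choosing $\rho$ small yields $\inf_{\|u\|=\rho}I(u)\geq\alpha_{\mathrm{mp}}>0$. For the descent direction, pick any $\varphi\in C_c^\infty(\mathbb{R}^n)\setminus\{0\}$ and use $(g_6)$ to estimate
$$I(t\varphi)\;\leq\;\frac{t^p}{p}\|\Delta\varphi\|_p^p+\frac{2t^{n/2}}{n}\|\Delta\varphi\|_{n/2}^{n/2}-\lambda t^\vartheta\int_{\mathbb{R}^n}\frac{|\varphi|^\vartheta}{|x|^\gamma}\,\mathrm{d}x,$$
which tends to $-\infty$ as $t\to+\infty$ because $\vartheta>n/2>p$. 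The mountain pass level $c\geq\alpha_{\mathrm{mp}}>0$ is thus well defined.

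\textbf{Palais--Smale analysis.} Given a $(\mathrm{PS})_c$ sequence $\{u_k\}\subset E$, combining $\mu I(u_k)-\langle I'(u_k),u_k\rangle$ with $(g_4)$ produces a uniform bound on $\|u_k\|$, so up to a subsequence $u_k\rightharpoonup u$ in $E$. The most delicate step is item $(d)$ of the novelties: upgrading weak convergence to $\Delta u_k\to\Delta u$ almost everywhere in $\mathbb{R}^n$. I will work locally on each ball $B_R$, test $\langle I'(u_k)-I'(u),\chi_R(u_k-u)\rangle$ against a cutoff $\chi_R$, use Theorem \ref{thm1.4} on the polynomial term of $g$ and the uniform $L^r$-integrability of $|x|^{-\gamma}\Phi_{\alpha,j_0}(u_k)$ supplied by Theorem \ref{thm1.2} on the exponential term to drive the right-hand side to $0$, and then invoke the $(S_+)$-monotonicity of $\Delta^2_p+\Delta^2_{n/2}$ to extract a.e.\ convergence of $\Delta u_k$. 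This suffices to pass to the limit in $\langle I'(u_k),v\rangle=o_k(1)$ and conclude $I'(u)=0$.

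\textbf{Main obstacle: staying below the critical level.} The compactness argument above is valid only when the quantity $\alpha_0|u_k|^{n/(n-2)}$ remains subcritical with respect to Theorem \ref{thm1.2}, which, via the $(g_4)$-identity $I(u_k)-\tfrac{1}{\mu}\langle I'(u_k),u_k\rangle\geq c+o_k(1)$, reduces to a quantitative bound of the shape
$$c\;<\;c^\ast:=\frac{2}{n}\Big(\frac{\beta_{\gamma,n}}{\alpha_0}\Big)^{\!\frac{n-2}{2}}.$$
Optimising the upper estimate of $I(t\varphi)$ in $t>0$ yields $\max_{t\geq 0}I(t\varphi)\leq K(\varphi,n,p,\vartheta)\,\lambda^{-\sigma}$ for some positive exponent $\sigma$ depending only on $\vartheta$ and $\min\{p,n/2\}$; hence choosing $\lambda_0$ sufficiently large forces $c\leq\max_{t\geq 0}I(t\varphi)<c^\ast$ for every $\lambda\geq\lambda_0$. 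Once $c<c^\ast$ is secured, one applies Theorem \ref{thm1.2} to the normalized sequence with some $\ell\in(1,L_n(u))$ (the alternative $u\equiv 0$ is ruled out since $(g_5)$ together with $L^1$-convergence would force $I(u_k)\to 0<c$), and Vitali's convergence theorem delivers
$$\int_{\mathbb{R}^n}\frac{G(x,u_k)}{|x|^\gamma}\,\mathrm{d}x\;\longrightarrow\;\int_{\mathbb{R}^n}\frac{G(x,u)}{|x|^\gamma}\,\mathrm{d}x.$$
Weak lower semicontinuity of the two leading modular terms then yields $I(u)\leq c$, and combined with $I'(u)=0$ and $\langle I'(u_k)-I'(u),u_k-u\rangle\to 0$ one in fact gets $I(u)=c>0$, so $u\neq 0$ is the desired nontrivial weak solution of \eqref{main problem}.
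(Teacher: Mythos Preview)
Your strategy is the same as the paper's: verify the mountain pass geometry for the functional, bound Palais--Smale sequences via $(g_4)$, extract a.e.\ convergence of $\Delta u_k$ by testing against a cut-off, push the minimax level below a quantitative threshold using $(g_6)$ with $\lambda$ large, and finish with the concentration--compactness principle (Theorem~\ref{thm1.2}) when $u\neq 0$. Two points require more care than your sketch indicates.

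First, the threshold you write as $c^\ast=\tfrac{2}{n}(\beta_{\gamma,n}/\alpha_0)^{(n-2)/2}$ is too optimistic. Because the functional has two nonhomogeneous pieces with exponents $p$ and $n/2$, the $(g_4)$-identity only controls $\|\Delta u_k\|_p^p+\|\Delta u_k\|_{n/2}^{n/2}$, not $\|u_k\|^{n/2}=\|\Delta u_k\|_p^{n/2}+\|\Delta u_k\|_{n/2}^{n/2}$. Converting one to the other forces a case split ($c\le 1$ versus $c>1$) and produces the paper's more involved threshold $c_0$ in \eqref{eq5.3}; this does not affect the overall argument since $\lambda$ large still drives $c$ below $c_0$, but your formula ``of the shape'' is not the operative one.

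Second, your exclusion of $u\equiv 0$ is a genuine gap: ``$(g_5)$ together with $L^1$-convergence would force $I(u_k)\to 0$'' is not correct as stated. If $u=0$ one indeed obtains $\int_{\mathbb{R}^n}|x|^{-\gamma}G(x,u_k)\,\mathrm{d}x\to 0$, but then $I(u_k)\to c$ only says the sum of the modular terms tends to $c>0$, not to $0$. The missing step (carried out in the paper's Proposition~\ref{prop5.7}, Case~1) is: from $c<c_0$ one still has $\limsup_k\|u_k\|^{n/(n-2)}<\beta_{\gamma,n}/\alpha_0$, so Theorem~\ref{thm1.1} (not Theorem~\ref{thm1.2}, which is unavailable when the weak limit vanishes) combined with the compact embedding of Theorem~\ref{thm1.4} yields $\int_{\mathbb{R}^n}|x|^{-\gamma}g(x,u_k)u_k\,\mathrm{d}x\to 0$; feeding this into $\langle I'(u_k),u_k\rangle=o_k(1)$ forces $u_k\to 0$ in $E$, hence $c=0$, a contradiction. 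With these two corrections your outline matches the paper's proof.
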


The structure of this article is organized in the following manner: Section \ref{sec2} is dedicated to some preliminary results, which are needed in our proofs.  In Section \ref{sec3}, we prove Theorem \ref{thm1.1} via a rearrangement-free approach, while in Section \ref{sec4}, we establish Theorem \ref{thm1.2} by means of Schwarz symmetrization. Sections \ref{sec5} and \ref{sec6} are devoted to the proofs of Theorems \ref{thm5.1} and \ref{thm5.22}, respectively. In Section \ref{sec7}, we establish a compact embedding result; for instance, see Theorem \ref{thm1.4}. Finally, in Section \ref{sec8}, we apply the preceding results together with the mountain pass theorem to prove Theorem \ref{thm1.5}.
\section{Preliminary results}\label{sec2}
In this section, we shall discuss the basic properties of Schwarz symmetrization and some useful lemmas. In this regard, we refer to \cite{Baernstein-2019, Bennett-Sharpley-1988}. First, we introduce the symmetrization of a Lebesgue measurable set $S$ in $\mathbb{R}^n$. Let $S^\ast$ be the open ball centered at the origin of $\mathbb{R}^n$ such that $|S^\ast|=|S|$. Further, we denote $|S|=0$ implies $S^\ast$ is the empty set, $|S|=+\infty$ implies $S^\ast=\mathbb{R}^n$ and $0<|S|<+\infty$ implies $S^\ast=B_R(0)$ such that $|S^\ast|=|S|=\sigma_n R^n$, where $\sigma_n$ is the volume of the unit ball in $\mathbb{R}^n$ and $R=R(S)$ is the volume radius of $S$. It is well-known that $S^\ast$ is called the symmetric decreasing rearrangement of $S$. Let $f\colon S\to\mathbb{R}$ be a measurable function vanishing at infinity, which means that
\begin{align*}
 |\{x\in S\colon~|f(x)|>t\}|=\int_{\{x\in S\colon~|f(x)|>t\}} \mathrm{d}x<+\infty,~\forall~t>0.   
\end{align*}
Its distribution function $\mu_f\colon[0,+\infty)\to [0,+\infty]$ is given by $\mu_f(t)=|\{x\in S\colon~|f(x)|>t\}|$ for all $t\geq 0$. Note that $\mu_f$ is always a nonnegative, decreasing and right continuous function on $[0,+\infty)$. Moreover, the unidimensional rearrangement of $f$ is a function $f^\sharp\colon[0,|S|]\to [0,+\infty]$, which is defined by $$f^\sharp(0):=\text{ess sup} f\quad\text{and}\quad f^\sharp(s):=\inf\{t\geq 0\colon~\mu_f(t)<s\},~\forall~s\in(0,|S|].$$ 
In fact, it has been observed by Bennett-Sharpley \cite{Bennett-Sharpley-1988} that the following identity also holds
$$f^\sharp(s)=m_{\mu_f}(s)=\sup\{t\geq 0:~\mu_f(t)>s\},~\forall~s\geq 0,$$
where $m_{\mu_f}$ is the distribution function of $\mu_f$. Besides this, $f^\sharp$ is a nonnegative, decreasing and right continuous function on $[0,+\infty)$. The maximal function of $f^\sharp$ is denoted by the symbol $f^{\sharp\sharp}$, and given by
\begin{align}\label{eq2.22}
  f^{\sharp\sharp}(s):=\frac{1}{s}\int_{0}^{s} f^{\sharp}(t)\,\mathrm{d}t,~\forall~~ s>0.  
\end{align}
In addition, $f^{\sharp\sharp}$ is nonnegative, decreasing and continuous on $(0,+\infty)$ and $f^\sharp\leq f^{\sharp\sharp}$. Now, we denote the function $f^\ast\colon S^\ast\to [0,+\infty]$ as the Schwarz symmetrization of $f$, alternatively known as the spherically symmetric decreasing rearrangement of $f$, and defined by 
$$ f^\ast(x):= f^\sharp(\sigma_n |x|^n),~\forall ~x\in S^\ast.$$
In particular, $f^\ast$ is a radially symmetric, nonnegative and nonincreasing function. Moreover, if $\Psi\colon\mathbb{R}\to \mathbb{R}$ is nondecreasing, then we have
\begin{equation}\label{eq2.1}
 (\Psi(f))^\ast=\Psi(f^\ast).   
\end{equation}
It has been observed that $f^\ast$ and $|f|$ are equimeasurable, that is, $\{x\colon~ f^\ast>t\}=\{x\colon~ |f|>t\}^\ast$ and $(\chi_A)^\ast=\chi_{A^\ast}$, where $\chi_A$ is the indicator function associate with measurable set $A\subset\mathbb{R}^n$. Moreover, if $f\colon\mathbb{R}^n\to \mathbb{R}$ is a measurable function vanishing at infinity, we can deduce from the Layer cake representation that
$$ f^\ast(x):=\int_{0}^{\infty} \chi_{\{x\colon~|f|>t\}^\ast }(x)\,\mathrm{d}t,~\forall~x\in\mathbb{R}^n.$$
In addition, we also have $\|f\|_p=\|f^\ast\|_p$ for all $p\in[1,+\infty)$ and the following Hardy-Littlewood inequality holds.
\begin{lemma} {\cite[Corollary 2.16]{Baernstein-2019}}\label{lem2.1}
 Suppose that $f$ and $g$ are two nonnegative measurable functions on $\mathbb{R}^n$ with $\mu_f(t)<+\infty$ and $\mu_g(t)<+\infty$ for all $t>0$. Then there holds
 \begin{equation}\label{eq2.2}
  \int_{\mathbb{R}^n} f(x)g(x)\,\mathrm{d}x\leq \int_{\mathbb{R}^n} f^\ast(x) g^\ast(x)\,\mathrm{d}x    
 \end{equation}
 with the understanding that when the left side is infinite, so is the right side. Moreover, if it is true that $\int_{\mathbb{R}^n} f(x)g(x)\,\mathrm{d}x<+\infty$, then the equality in \eqref{eq2.2} holds if and only if $|\{(x,y)\in\mathbb{R}^{2n}\colon~f(x)<(f(y)~\text{and}~g(x)>g(y)\}|=0.$
\end{lemma}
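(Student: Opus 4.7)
The plan is to reduce both sides to double integrals of measures of superlevel sets via the Layer cake formula and Fubini, and then exploit the key geometric observation that superlevel sets of the symmetric decreasing rearrangements are concentric balls. Since $f,g$ vanish at infinity, both $f$ and $g$ admit the representation $f(x)=\int_0^\infty \chi_{\{f>s\}}(x)\,\mathrm{d}s$ (and similarly for $g$, $f^\ast$, $g^\ast$). Plugging these into $\int fg\,\mathrm{d}x$ and applying Tonelli's theorem, I would obtain
\begin{equation*}
\int_{\mathbb{R}^n} f(x)g(x)\,\mathrm{d}x=\int_0^\infty\!\!\int_0^\infty \bigl|\{f>s\}\cap\{g>t\}\bigr|\,\mathrm{d}s\,\mathrm{d}t,
\end{equation*}
and an identical identity for $\int f^\ast g^\ast\,\mathrm{d}x$ with the obvious replacements.

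The next step is the bound $|\{f>s\}\cap\{g>t\}|\le \min(\mu_f(s),\mu_g(t))$, which is immediate, paired with the equality $|\{f^\ast>s\}\cap\{g^\ast>t\}|=\min(\mu_f(s),\mu_g(t))$. This equality is the heart of the matter: by equimeasurability, $\{f^\ast>s\}$ and $\{g^\ast>t\}$ are the open balls centred at the origin with volumes $\mu_f(s)$ and $\mu_g(t)$ respectively, hence one is contained in the other and their intersection has volume equal to the smaller one. Integrating the pointwise comparison over $(s,t)\in(0,\infty)^2$ gives \eqref{eq2.2}. Note that both sides may be $+\infty$, and the computation shows that if the right-hand side is finite then so is the left.

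For the equality case, the pointwise inequality forces
\begin{equation*}
\bigl|\{f>s\}\cap\{g>t\}\bigr|=\min\bigl(\mu_f(s),\mu_g(t)\bigr)\quad\text{for a.e. }(s,t)\in(0,\infty)^2,
\end{equation*}
which is equivalent to saying that, up to null sets, one of the superlevel sets $\{f>s\}$, $\{g>t\}$ is contained in the other for a.e. pair of levels. The forward direction then runs as follows: if the set $N:=\{(x,y)\in\mathbb{R}^{2n}:f(x)<f(y),\ g(x)>g(y)\}$ had positive $2n$-dimensional measure, I would use Fubini in the variables $(x,y)$ together with a standard approximation by rational levels to extract a pair $(s_0,t_0)$ and disjoint sets of positive measure, one inside $\{f>s_0\}\setminus\{g>t_0\}$ and one inside $\{g>t_0\}\setminus\{f>s_0\}$, contradicting the nestedness obtained above. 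The converse direction uses that $|N|=0$ implies the superlevel sets are comparable at almost every pair of levels, yielding the equality.

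The main obstacle is not the inequality itself, which is essentially a one-line consequence of the Layer cake identity once the ball-geometry of rearranged superlevel sets is invoked, but rather the careful measure-theoretic bookkeeping in the equality case, in particular the extraction of rational levels $(s_0,t_0)$ from the positivity of $|N|$ and the passage between ``a.e. $(s,t)$'' statements in $(0,\infty)^2$ and ``a.e. $(x,y)$'' statements in $\mathbb{R}^{2n}$.
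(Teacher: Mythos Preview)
The paper does not supply its own proof of this lemma; it is simply quoted from Baernstein's monograph (the ``cf.'' in the statement). Your layer-cake/Tonelli argument is precisely the standard derivation of the Hardy--Littlewood inequality and is correct for the inequality \eqref{eq2.2}; your outline of the equality case via nestedness of superlevel sets is also the right idea, though as you yourself note the passage between the a.e.-$(s,t)$ nesting and the $|N|=0$ condition in $\mathbb{R}^{2n}$ requires some bookkeeping that you have only sketched.
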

We highlight that the following lemmas are essential for studying the subsequent sections. 
\begin{lemma}{\cite[Lemma 2]{do O-Macedo-2014}}\label{lem2.2}
  Let $S\subset \mathbb{R}^n$ be an open set and $f_k,f\in L^\wp(S)$ such that $f_k\rightharpoonup f$ in $L^\wp(S)$ as $k\to\infty$ for any $\wp>1$. Then up to a subsequence $f_k^\sharp:=h_k \to h$  as $k\to\infty$ a.e. in $[0,|S|]$ for some $h\in L^\wp([0,|S|])$ with $\|h\|_{L^\wp([0,|S|])}\geq \|f^\sharp\|_{L^\wp([0,|S|])}$.
\end{lemma}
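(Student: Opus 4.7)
The plan is to combine a Helly-type selection argument for monotone functions with weak convergence in $L^\wp$ and the Hardy-Littlewood-P\'olya weak majorization principle.

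First I would observe that the hypothesis $f_k\rightharpoonup f$ in $L^\wp(S)$ forces $\{f_k\}$ to be bounded in $L^\wp(S)$, so by equimeasurability $\|h_k\|_{L^\wp([0,|S|])}=\|f_k\|_{L^\wp(S)}\leq C$ uniformly in $k$. Since each $h_k=f_k^\sharp$ is nonincreasing on $(0,|S|]$, H\"older's inequality yields
\begin{equation*}
  h_k(s)\leq \frac{1}{s}\int_0^s h_k(t)\,\mathrm{d}t\leq C\,s^{-1/\wp},\qquad \forall\, s\in(0,|S|],
\end{equation*}
which delivers uniform pointwise bounds on every compact subset of $(0,|S|]$. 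Combining this with the monotonicity of each $h_k$, a standard diagonal extraction on a countable dense subset of $(0,|S|]$ (Helly's selection theorem) produces a subsequence, still labelled $\{h_k\}$, converging to a nonincreasing limit $h$ at every continuity point of $h$. Because monotone functions have at most countably many discontinuities, $h_k\to h$ a.e.\ on $[0,|S|]$, and Fatou's lemma gives $h\in L^\wp([0,|S|])$.

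The core step is the norm inequality $\|h\|_{L^\wp}\geq \|f^\sharp\|_{L^\wp}$, for which I would invoke the classical identity
\begin{equation*}
  \int_0^s f^\sharp(t)\,\mathrm{d}t=\sup\Bigl\{\int_E |f|\,\mathrm{d}x\,:\, E\subset S\text{ measurable},\; |E|=s\Bigr\},\qquad s\in(0,|S|].
\end{equation*}
For any admissible $E$, the function $\operatorname{sgn}(f)\chi_E$ lies in $L^{\wp'}(S)$, so weak convergence yields
\begin{equation*}
  \int_E |f|\,\mathrm{d}x=\lim_{k\to\infty}\int_E \operatorname{sgn}(f)\,f_k\,\mathrm{d}x\leq \liminf_{k\to\infty}\int_E |f_k|\,\mathrm{d}x\leq \liminf_{k\to\infty}\int_0^s h_k(t)\,\mathrm{d}t.
\end{equation*}
Since $h_k(t)\leq C\,t^{-1/\wp}$ is integrable on $(0,s)$ precisely because $\wp>1$, and $h_k\to h$ a.e., dominated convergence forces $\int_0^s h_k\,\mathrm{d}t\to\int_0^s h\,\mathrm{d}t$. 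Taking the supremum over $E$ delivers $\int_0^s f^\sharp\,\mathrm{d}t\leq \int_0^s h\,\mathrm{d}t$ for every $s\in(0,|S|]$, i.e., $(f^\sharp)^{\sharp\sharp}\leq h^{\sharp\sharp}$ pointwise.

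Finally, since $L^\wp$ is a rearrangement invariant Banach function space, the pointwise inequality $(f^\sharp)^{\sharp\sharp}\leq h^{\sharp\sharp}$ translates into the desired norm inequality $\|f^\sharp\|_{L^\wp([0,|S|])}\leq \|h\|_{L^\wp([0,|S|])}$ via the Hardy-Littlewood-P\'olya weak majorization principle (cf.\ Bennett-Sharpley \cite{MR928802}). The principal obstacle in this plan is that no direct pointwise comparison $h\geq f^\sharp$ need hold, so the estimate must be routed through the supremum characterization of $f^{\sharp\sharp}$ and the majorization theorem; this is the single place where the weak convergence hypothesis on $\{f_k\}$ is genuinely exploited, and where the exclusion $\wp=1$ enters through the integrability of the pointwise envelope $Ct^{-1/\wp}$.
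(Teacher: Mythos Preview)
The paper does not supply its own proof of this lemma; it is quoted from do~\'O--Macedo \cite{MR3247386} as a preliminary tool, so there is no in-paper argument to compare against.

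Your argument is correct and self-contained. The Helly-type extraction from the uniform envelope $h_k(s)\leq Cs^{-1/\wp}$ gives the a.e.\ convergent subsequence with nonincreasing limit $h\in L^\wp$, and the route to the norm inequality through the supremum characterization of $\int_0^s f^\sharp$ and the Hardy--Littlewood--P\'olya majorization principle is the right one: weak convergence is used precisely once, against the test functions $\operatorname{sgn}(f)\chi_E\in L^{\wp'}$, and the restriction $\wp>1$ enters only through the integrability of the dominating function $Ct^{-1/\wp}$ on $(0,s)$. One small point worth making explicit is that the limit $h$, being nonincreasing, coincides a.e.\ with its own decreasing rearrangement, so $\int_0^s h = \int_0^s h^\sharp$ and the majorization hypothesis $(f^\sharp)^{\sharp\sharp}\leq h^{\sharp\sharp}$ follows directly from the integral inequality you derived.
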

The following result is a crucial tool for proving Theorem \ref{thm1.2}. 
\begin{lemma}{\cite[Proposition 2.2]{Nguyen-2019}}\label{lem2.3}
   Let $u\in C^\infty_0(\mathbb{R}^n)$ and $-\Delta u=f\in C^\infty_0(\mathbb{R}^n)$, then there exists a constant $D(n)$ depends upon only $n$ such that
   \begin{align*}
       u^\sharp(s_1)-u^\sharp(s_2)\leq \frac{1}{n^2\sigma_n^\frac{2}{n}}\int_{s_1}^{s_2}\frac{f^{\sharp\sharp}(t)}{t^{1-\frac{2}{n}}}\,\mathrm{d}t+D(n)\|f\|_{\frac{n}{2}},~\forall~~ 0<s_1<s_2<+\infty.
   \end{align*}
\end{lemma}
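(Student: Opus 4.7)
The plan is to carry out the classical Talenti-type rearrangement argument, combining the coarea formula, the Euclidean isoperimetric inequality, and the divergence theorem applied to the super-level sets of $|u|$. Since $u\in C_0^\infty(\mathbb{R}^n)$ has compact support, the set $\Omega_t := \{x \in \mathbb{R}^n: |u(x)|>t\}$ has finite Lebesgue measure $\mu(t)<+\infty$ for every $t>0$, and Sard's theorem guarantees that $\{|u|=t\}$ is a smooth embedded hypersurface for almost every $t>0$, so that the coarea formula and the tangential-normal decomposition of $\nabla u$ are available.

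The first main step is to bound $\int_{\{|u|=t\}} |\nabla u|\, \mathrm{d}\mathcal{H}^{n-1}$. Splitting $\{|u|=t\} = \{u=t\}\cup \{u=-t\}$ and applying the divergence theorem separately on $\{u>t\}$ and $\{u<-t\}$, using $-\Delta u = f$ together with the Hardy--Littlewood inequality from Lemma \ref{lem2.1}, yields
\begin{align*}
\int_{\{|u|=t\}} |\nabla u|\,\mathrm{d}\mathcal{H}^{n-1} \;\le\; \int_{\Omega_t}|f|\,\mathrm{d}x \;\le\; \int_0^{\mu(t)} f^{\sharp}(\tau)\,\mathrm{d}\tau \;=\; \mu(t)\,f^{\sharp\sharp}(\mu(t)).
\end{align*}
Combining this bound with the coarea identity $-\mu'(t) = \int_{\{|u|=t\}} |\nabla u|^{-1}\,\mathrm{d}\mathcal{H}^{n-1}$, the Cauchy--Schwarz estimate
\begin{align*}
\bigl(\mathcal{H}^{n-1}(\{|u|=t\})\bigr)^2 \;\le\; \Big(\int_{\{|u|=t\}} |\nabla u|\,\mathrm{d}\mathcal{H}^{n-1}\Big)\cdot(-\mu'(t)),
\end{align*}
and the Euclidean isoperimetric inequality $\mathcal{H}^{n-1}(\{|u|=t\}) \ge n\sigma_n^{1/n}\mu(t)^{(n-1)/n}$, produces the key differential inequality
\begin{align*}
n^2 \sigma_n^{2/n}\,\mu(t)^{2-2/n} \;\le\; \mu(t)\,f^{\sharp\sharp}(\mu(t))\,(-\mu'(t)).
\end{align*}

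The change of variable $s=\mu(t)$, under which $u^\sharp(s)=t$ on the strictly decreasing portion of $\mu$, translates this into the pointwise slope estimate
\begin{align*}
-\frac{\mathrm{d}u^\sharp}{\mathrm{d}s}(s) \;\le\; \frac{f^{\sharp\sharp}(s)}{n^2\sigma_n^{2/n}\, s^{1-2/n}} \quad \text{for a.e. } s>0,
\end{align*}
and integrating over $[s_1,s_2]$ supplies the principal integral term on the right-hand side of the claim. The additive correction $D(n)\|f\|_{n/2}$ is meant to absorb the lower-order contributions coming from the non-absolutely-continuous part of $u^\sharp$, namely the jumps arising from plateaus of $|u|$, where $|\nabla u|$ vanishes on a set of positive measure and the formal slope inequality above fails to record a finite amount of variation; these residual pieces can be controlled by a uniform bound on $\|u\|_\infty$ in terms of $\|f\|_{n/2}$ via the Newtonian-potential representation $u=G*f$ together with the corresponding Riesz-potential estimates on the compact support.

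The principal obstacle in rigorously executing the above scheme is the careful handling of the sign changes and of the plateau/critical sets of $u$, which prevent $t\mapsto \mu(t)$ from being strictly decreasing and keep $u^\sharp$ from being $C^1$. This is precisely where the additive $D(n)\|f\|_{n/2}$ term becomes necessary: one either approximates $u$ by Morse-type smoothings and passes to the limit using the lower-semicontinuity properties of the rearrangement recorded in Lemma \ref{lem2.2}, or works directly with the distributional derivative of $u^\sharp$ and estimates its singular part by the $L^{n/2}$-norm of $f$. Either approach recovers the stated inequality with the explicit coefficient $1/(n^2\sigma_n^{2/n})$ in front of the integral term and a dimensional constant $D(n)$ in front of $\|f\|_{n/2}$.
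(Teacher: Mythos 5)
The Talenti-type core of your argument --- divergence theorem on super-level sets of $|u|$, Hardy--Littlewood, coarea, Cauchy--Schwarz, isoperimetric inequality --- is the right strategy, and the chain of estimates you write out does produce the coefficient $1/(n^2\sigma_n^{2/n})$ in front of the integral term, in agreement with the statement. Note, though, that the paper itself does not prove this lemma: it is quoted verbatim from Nguyen [MR3959499, Proposition~2.2], so there is no ``paper's own proof'' to compare against; the assessment below concerns only the internal correctness of your sketch.

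The proposed source of the additive term $D(n)\|f\|_{n/2}$ is where your argument breaks. You claim to control the residual (non--absolutely-continuous) contribution to $u^\sharp$ by ``a uniform bound on $\|u\|_\infty$ in terms of $\|f\|_{n/2}$ via the Newtonian-potential representation $u=G*f$ together with the corresponding Riesz-potential estimates.'' This bound is false: the Riesz potential $I_2$ does \emph{not} map $L^{n/2}(\mathbb{R}^n)$ into $L^\infty(\mathbb{R}^n)$, even on compactly supported data, and the corresponding embedding $W^{2,n/2}\hookrightarrow L^\infty$ fails at the borderline --- this failure is precisely what Adams' inequality is designed to remedy, so invoking such a bound here would be circular. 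There is also a smaller conceptual slip: for continuous $u$, plateaus of $|u|$ produce jumps in $\mu$ but leave $u^\sharp$ \emph{constant} on the corresponding $s$-intervals (no jump in $u^\sharp$); the real question is whether $u^\sharp$ has a singular (Cantor-type) part, not whether it jumps. To actually obtain an honest $D(n)\|f\|_{n/2}$ correction one needs a different device --- for example, the O'Neil rearrangement inequality applied to $u=G*f$ gives $u^{\sharp\sharp}(s)\le s\,G^{\sharp\sharp}(s)f^{\sharp\sharp}(s)+\int_s^\infty G^\sharp(t)f^\sharp(t)\,\mathrm{d}t$, and the first term is bounded by a dimensional constant times $\|f\|_{n/2}$ by H\"older (since $f^{\sharp\sharp}(s)\le s^{-2/n}\|f\|_{n/2}$), which is exactly the shape of the correction in the lemma. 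As written, your sketch has a genuine gap at this last step.
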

\begin{lemma}{\cite[Lemma A.VI]{Berestycki-Lions-1983}}\label{lem2.4}
    If $u\in L^\wp(\mathbb{R}^n)$ for all $\wp\geq 1$ is a radial nonincreasing function, then there holds
    $$|u(x)|\leq |x|^{-\frac{n}{\wp}}\Big(\frac{n}{\omega_{n-1}}\Big)^{\frac{1}{\wp}}\|u\|_\wp,~\forall~x\in\mathbb{R}^n\setminus\{0\}. $$
\end{lemma}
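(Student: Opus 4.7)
The plan is to exploit the radial nonincreasing structure of $u$ to bound $|u(x)|$ from above by the average of $|u|^\wp$ over the ball of radius $|x|$. Fix $x \in \mathbb{R}^n \setminus\{0\}$ and set $r := |x|$. Since $u$ is radial and nonincreasing, for every $y$ with $|y|\leq r$ we have $|u(y)| \geq |u(x)|$. Therefore the constant function equal to $|u(x)|$ is a pointwise lower bound of $|u|$ on $B_r(0)$.

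Integrating the $\wp$-th power over $B_r(0)$ gives
\begin{equation*}
\|u\|_\wp^\wp \;=\; \int_{\mathbb{R}^n} |u(y)|^\wp\,\mathrm{d}y \;\geq\; \int_{B_r(0)} |u(y)|^\wp\,\mathrm{d}y \;\geq\; |u(x)|^\wp \cdot |B_r(0)|.
\end{equation*}
Next I would recall that the Lebesgue measure of $B_r(0)$ is $\sigma_n r^n = \frac{\omega_{n-1}}{n} r^n$, where $\omega_{n-1}$ is the surface measure of the unit sphere in $\mathbb{R}^n$ (this identity is the standard polar-coordinate relation $|B_r(0)| = \int_0^r \omega_{n-1} \rho^{n-1}\,\mathrm{d}\rho$). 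Substituting this into the inequality above gives
\begin{equation*}
|u(x)|^\wp \;\leq\; \frac{n}{\omega_{n-1}\, r^n}\,\|u\|_\wp^\wp.
\end{equation*}

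Taking $\wp$-th roots and replacing $r$ by $|x|$ yields the claimed bound
\begin{equation*}
|u(x)| \;\leq\; |x|^{-\frac{n}{\wp}} \Big(\frac{n}{\omega_{n-1}}\Big)^{\frac{1}{\wp}} \|u\|_\wp.
\end{equation*}
There is no real obstacle here; the only subtlety worth flagging is the meaning of the pointwise statement when $u$ is only an $L^\wp$ function (equivalence class). I would handle this by selecting the canonical radial nonincreasing representative (for which the inequality $|u(y)| \geq |u(x)|$ for $|y| \leq |x|$ holds in the genuine pointwise sense for all $x \neq 0$), so that the argument above applies verbatim and yields the stated estimate for every $x \in \mathbb{R}^n \setminus\{0\}$.
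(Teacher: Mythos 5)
Your proof is correct and is precisely the standard argument for this radial lemma: bound $|u(x)|$ below on the ball $B_{|x|}$, integrate the $\wp$-th power, and use $|B_r| = \frac{\omega_{n-1}}{n} r^n$. The paper itself gives no proof and simply cites Berestycki--Lions [Lemma A.VI], whose proof proceeds exactly as you wrote, so this is the same approach.
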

\begin{lemma}{\cite[Proposition 1]{Aouaoui-2025}}\label{lem2.5}
 Let $1<p<\frac{n}{2}$ with $n\geq 4$, then the embedding $E\hookrightarrow L^\theta (\mathbb{R}^n)$ is continuous for all  $\theta\in [p^\ast,+\infty)$. In addition, the embedding $E\hookrightarrow L^\theta _{\textit{loc}}(\mathbb{R}^n)$ is compact for all $\theta\in [1,+\infty)$.
\end{lemma}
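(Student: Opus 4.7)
The strategy is to combine the continuous and local-compact embeddings furnished by Lemma~\ref{lem2.5}, namely $E\hookrightarrow L^\theta(\mathbb{R}^n)$ for every $\theta\in[p^\ast,+\infty)$ and $E\hookrightarrow L^\theta_{\textit{loc}}(\mathbb{R}^n)$ for every $\theta\in[1,+\infty)$, with a careful three-region splitting of the integral against the singular weight. The hypothesis $\gamma\in(0,n)$ will be used twice: $\gamma<n$ to guarantee that $|x|^{-\gamma}$ is locally integrable (with some room left over after a H\"older pairing), and $\gamma>0$ to guarantee decay of the weight at infinity. Proving continuity first is convenient because the same Hölder step then recycles for the compactness argument.

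For continuity, I would decompose $\int_{\mathbb{R}^n}|u|^\varrho|x|^{-\gamma}\,\mathrm{d}x$ into integrals over $B_1$ and $B_1^c$. On $B_1^c$ the pointwise bound $|x|^{-\gamma}\le 1$ yields $\int_{B_1^c}|u|^\varrho|x|^{-\gamma}\,\mathrm{d}x\le\|u\|_{L^\varrho(\mathbb{R}^n)}^\varrho\lesssim\|u\|^\varrho$ by Lemma~\ref{lem2.5}. On $B_1$ I would fix any $s\in(1,n/\gamma)$ (nonempty because $\gamma<n$) with conjugate exponent $s'$, and apply Hölder to obtain
\[
\int_{B_1}|u|^\varrho|x|^{-\gamma}\,\mathrm{d}x
\le\Bigl(\int_{B_1}|x|^{-\gamma s}\,\mathrm{d}x\Bigr)^{1/s}\Bigl(\int_{B_1}|u|^{\varrho s'}\,\mathrm{d}x\Bigr)^{1/s'}.
\]
The first factor is finite as $\gamma s<n$, and since $\varrho s'>\varrho\ge p^\ast$, Lemma~\ref{lem2.5} again bounds the second factor by a constant multiple of $\|u\|^\varrho$.

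For compactness, take $u_k\rightharpoonup u$ in $E$. Reflexivity gives $\sup_k\|u_k\|<+\infty$, so by the continuous embedding both $\sup_k\|u_k-u\|_{L^\varrho(\mathbb{R}^n)}$ and $\sup_k\|u_k-u\|_{L^{\varrho s'}(\mathbb{R}^n)}$ are finite. Fix $\delta>0$. Using the decay of the weight at infinity, pick $R>0$ so large that
\[
\int_{B_R^c}|u_k-u|^\varrho|x|^{-\gamma}\,\mathrm{d}x\le R^{-\gamma}\|u_k-u\|_{L^\varrho(\mathbb{R}^n)}^\varrho<\delta\quad\text{for every }k.
\]
Using the Hölder step from the continuity argument applied to the ball $B_\epsilon$ in place of $B_1$, pick $\epsilon\in(0,R)$ small so that $\bigl(\int_{B_\epsilon}|x|^{-\gamma s}\,\mathrm{d}x\bigr)^{1/s}$ is small enough to force $\int_{B_\epsilon}|u_k-u|^\varrho|x|^{-\gamma}\,\mathrm{d}x<\delta$ uniformly in $k$. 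On the annulus $B_R\setminus B_\epsilon$ the weight is bounded by $\epsilon^{-\gamma}$, whence
\[
\int_{B_R\setminus B_\epsilon}|u_k-u|^\varrho|x|^{-\gamma}\,\mathrm{d}x\le\epsilon^{-\gamma}\int_{B_R}|u_k-u|^\varrho\,\mathrm{d}x\to 0
\]
as $k\to+\infty$ by the compact embedding $E\hookrightarrow L^\varrho_{\textit{loc}}(\mathbb{R}^n)$ of Lemma~\ref{lem2.5}. Summing the three contributions yields $\limsup_k\int_{\mathbb{R}^n}|u_k-u|^\varrho|x|^{-\gamma}\,\mathrm{d}x\le 2\delta$, and as $\delta$ is arbitrary the compactness follows.

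No step is technically deep here; the argument is essentially the standard tightness-and-local-compactness scheme adapted to a singular weight. The only mild subtlety is checking that $s$ can be chosen to satisfy $s>1$, $\gamma s<n$, and $\varrho s'\ge p^\ast$ simultaneously, but this is automatic since $\varrho s'>\varrho\ge p^\ast$ for any $s'>1$ and $s<n/\gamma$ is available because $\gamma<n$. The indispensable role of the strict positivity $\gamma>0$ is that it produces the decay factor $R^{-\gamma}$ used to handle the tail at infinity; if $\gamma=0$ the argument would collapse there, which is consistent with the failure of compactness in the unweighted case on $\mathbb{R}^n$.
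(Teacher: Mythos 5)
Your proposal does not prove Lemma~\ref{lem2.5} at all. It proves a different statement — the compactness of the weighted embedding $E\hookrightarrow L^\varrho(\mathbb{R}^n,|x|^{-\gamma}\mathrm{d}x)$, which in this paper is Theorem~\ref{thm1.4} — and it does so \emph{assuming} Lemma~\ref{lem2.5} as a hypothesis. This is circular with respect to the task: you were asked to establish that $E\hookrightarrow L^\theta(\mathbb{R}^n)$ continuously for $\theta\ge p^\ast$ and $E\hookrightarrow L^\theta_{\textit{loc}}(\mathbb{R}^n)$ compactly for $\theta\ge 1$, yet the very first sentence of your argument invokes exactly those two embeddings as ``furnished by Lemma~\ref{lem2.5}.'' Moreover, the statement you are meant to prove contains no singular weight and no parameter $\gamma$, whereas your entire argument revolves around $\gamma\in(0,n)$, the three-region splitting, and the decay of $|x|^{-\gamma}$ at infinity; none of that is relevant to the unweighted global and local embeddings in the lemma.

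For the record, the paper itself does not reprove Lemma~\ref{lem2.5} — it quotes it from Aouaoui [\textit{Proposition~1} of the cited reference]. If you wanted to supply an argument, the right ingredients are entirely different from what you wrote: for the global continuous embedding one uses that $u\in E$ means $\Delta u\in L^p\cap L^{\frac{n}{2}}$, applies Riesz-potential / Calder\'on--Zygmund estimates for the Laplacian together with the critical Sobolev embedding $D^{2,p}(\mathbb{R}^n)\hookrightarrow L^{p^\ast}(\mathbb{R}^n)$ and the borderline behaviour at exponent $\tfrac{n}{2}$, then interpolates over the range $[p,\tfrac{n}{2}]$ to reach all $\theta\in[p^\ast,+\infty)$; for the local compactness one invokes the Rellich--Kondrachov theorem on balls, using that the $W^{2,\frac{n}{2}}_{\textit{loc}}$ regularity inherited from $\Delta u\in L^{\frac{n}{2}}$ gives compact injection into every $L^\theta(B_R)$, $\theta<\infty$. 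Your current write-up addresses none of this; it should be discarded and restarted from the actual statement of the lemma.
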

The following corollary follows from \cite[Lemma 2.1 and Lemma 2.2]{Yang-2012@}. 
\begin{corollary}\label{cor2.6}
    The function $s\mapsto\operatorname{\exp}(s)-\sum_{j=0}^{j
_0-1}\frac{s^j}{j!}$ is continuous, increasing and convex in $[0,+\infty)$, where $j_0$ is defined in \eqref{eq1.55}. Moreover, for any $\alpha>0$, $\wp>1$ and $n\geq 4$, we have
 $$ \bigg(\operatorname{exp}(\alpha|s|^{\frac{n}{n-2}})-\sum_{j=0}^{j_0-1} \frac{\alpha^j}{j!}|s|^{\frac{nj}{n-2}}\bigg)^\wp\leq  \operatorname{exp}(\alpha \wp|s|^{\frac{n}{n-2}})-\sum_{j=0}^{j_0-1} \frac{\alpha^j\wp^j}{j!}|s|^{\frac{nj}{n-2}},~\forall~s\in\mathbb{R}. $$
\end{corollary}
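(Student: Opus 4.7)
The corollary has two parts. The qualitative claims (continuity, monotonicity, convexity on $[0,+\infty)$) follow immediately by rewriting the map as the exponential tail $f(s) = \sum_{j \geq j_0} s^j / j!$; uniform convergence on compacta gives continuity, and nonnegativity of all terms of $f'$ and $f''$ on $[0,+\infty)$ gives monotonicity and convexity.

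For the main pointwise inequality, I would substitute $t = \alpha|s|^{n/(n-2)} \geq 0$ and rewrite the claim as
$$g(t)^{\wp} \leq g(\wp t), \qquad t \geq 0,\ \wp > 1,$$
where $g(u) := e^u - \sum_{j=0}^{j_0-1} u^j/j!$ (indeed, $g(\wp t)$ is exactly the right-hand side of the claimed estimate). The case $t = 0$ is trivial, so fix $t > 0$ and introduce the auxiliary function $\phi(\wp) := \wp^{-1}\log g(\wp t)$. The plan is to show that $\phi$ is nondecreasing on $(0,+\infty)$: once this is in hand, $\phi(\wp) \geq \phi(1)$ for $\wp \geq 1$ exponentiates to the desired estimate.

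Differentiating yields
$$\phi'(\wp) = \frac{u g'(u) - g(u)\log g(u)}{\wp^2 g(u)}, \qquad u = \wp t,$$
so the whole problem reduces to the pointwise inequality $u g'(u) \geq g(u) \log g(u)$ for $u > 0$. To close this, I would combine two elementary inputs: the index-shift identity $g'(u) = g(u) + u^{j_0-1}/(j_0-1)!$ obtained by differentiating the series, which immediately gives $u g'(u) \geq u g(u)$, and the trivial estimate $\log g(u) \leq u$ coming from $g(u) \leq e^u$. Chaining these gives $u g'(u) \geq u g(u) \geq g(u)\log g(u)$, as required. The one genuine step here is recognizing the correct monotone quantity $\phi(\wp)$; after that, the rest is two lines of series manipulation, and the limit $u \to 0^+$ (where $\log g \to -\infty$) causes no difficulty because $g(u)\log g(u) \to 0$ at the same rate.
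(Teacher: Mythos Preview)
Your argument is correct. The paper itself does not prove this corollary; it merely attributes the statement to Yang's Lemma~2.1 and gives no argument, so there is no in-paper proof to compare against. Your route---reducing to $g(t)^\wp \le g(\wp t)$ via the substitution $t=\alpha|s|^{n/(n-2)}$, then establishing monotonicity of $\phi(\wp)=\wp^{-1}\log g(\wp t)$ through the chain $u\,g'(u)\ge u\,g(u)\ge g(u)\log g(u)$ using the index-shift identity $g'=g+u^{j_0-1}/(j_0-1)!$ together with the trivial bound $\log g(u)\le u$---is self-contained and clean. One minor remark: since $t>0$ is fixed and you only need $\phi(\wp)\ge\phi(1)$ for $\wp\ge 1$, you never actually approach $u=0^+$, so the closing comment about $g(u)\log g(u)\to 0$ is unnecessary (though harmless).
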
      
\begin{lemma}{\cite[Theorem 1.2 and Lemma 3.5]{Fiorenza-Formica-Roskovec-Soudsky-2021}}\label{lem2.7}
If $ q\in[1,+\infty]$, $r\in[1,+\infty)$, $j,~k\in\mathbb{N}$, $j<k$ such that $\frac{1}{p}=\frac{j}{kr}+\frac{k-j}{kq}$, then there exists a constant $C>0$ independent of $u$ such that
\begin{align*}
   \|\nabla^ju\|_p\leq C  \|\nabla^k u\|^{\frac{j}{k}}_r \| u\|^{1-\frac{j}{k}}_q,~\forall~u\in L^q(\mathbb{R}^n)\cap W^{k,r}(\mathbb{R}^n).
\end{align*}
In particular, for $1<p<\frac{n}{2}$ with $n\geq 4$ and $t\in\{p,\frac{n}{2}\}$, there holds
 \begin{align*}
     \|\nabla u\|_t\leq C\|\Delta u\|^{\frac{1}{2}}_t\| u\|^{\frac{1}{2}}_t,~\forall~u\in W^{2,t}(\mathbb{R}^n).
 \end{align*}
\end{lemma}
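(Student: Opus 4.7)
The statement consists of the classical Gagliardo--Nirenberg multiplicative inequality on $\mathbb{R}^{n}$ together with a specific instance, so my plan is to prove the general inequality first and obtain the ``in particular'' assertion as a direct corollary. By standard density, $C_{c}^{\infty}(\mathbb{R}^{n})$ is dense in $L^{q}(\mathbb{R}^{n})\cap W^{k,r}(\mathbb{R}^{n})$, so it suffices to prove each inequality for $u\in C_{c}^{\infty}(\mathbb{R}^{n})$ and then extend by continuity. The scaling condition $\frac{1}{p}=\frac{j}{kr}+\frac{k-j}{kq}$ is precisely the relation that makes both sides of the inequality transform identically under the dilation $u_{\lambda}(x)=u(\lambda x)$, so it is a necessary condition; the work lies in establishing sufficiency.

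\textbf{Proof of the general inequality.} For interior exponents ($1<r<\infty$ and $1<q<\infty$) I would argue via Fourier analysis. Using the Riesz-transform identities $\partial_{\alpha}=R_{\alpha}(-\Delta)^{1/2}$ and their iterates, together with the Calder\'on--Zygmund boundedness of each $R_{\alpha}$ on $L^{p}$ for $1<p<\infty$, the inequality reduces to the homogeneous Sobolev interpolation
\begin{equation*}
\|(-\Delta)^{j/2}u\|_{p}\le C\,\|(-\Delta)^{k/2}u\|_{r}^{j/k}\|u\|_{q}^{1-j/k}.
\end{equation*}
This in turn I would prove by a Littlewood--Paley decomposition $u=\sum_{N}P_{N}u$: Bernstein's inequality gives $\|(-\Delta)^{s/2}P_{N}u\|_{p}\simeq N^{s}\|P_{N}u\|_{p}$, and combining low-frequency contributions controlled by $\|u\|_{q}$ with high-frequency ones controlled by $\|(-\Delta)^{k/2}u\|_{r}$, then optimising the dyadic cut-off, yields the stated inequality with the correct exponents. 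The special case $j=1$, $k=2$ and $p=r=q=t\in\{p,n/2\}\subset(1,\infty)$ is then immediate.

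\textbf{Elementary alternative for the special case when $t\ge 2$.} When $t\ge 2$ the second-statement inequality admits a short direct proof avoiding Fourier analysis. For $u\in C_{c}^{\infty}(\mathbb{R}^{n})$, integration by parts gives
\begin{align*}
\int_{\mathbb{R}^{n}}|\nabla u|^{t}\,\mathrm{d}x
&=-\int_{\mathbb{R}^{n}} u\,\operatorname{div}\!\bigl(|\nabla u|^{t-2}\nabla u\bigr)\,\mathrm{d}x\\
&=-\int_{\mathbb{R}^{n}} u\,|\nabla u|^{t-2}\Delta u\,\mathrm{d}x-(t-2)\int_{\mathbb{R}^{n}} u\,|\nabla u|^{t-4}\bigl(\nabla u\cdot D^{2}u\,\nabla u\bigr)\,\mathrm{d}x,
\end{align*}
where both integrands are pointwise bounded by $|u|\,|\nabla u|^{t-2}|D^{2}u|$. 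H\"older with exponents $(t,t/(t-2),t)$ followed by the Calder\'on--Zygmund bound $\|D^{2}u\|_{t}\le C\|\Delta u\|_{t}$, valid for $1<t<\infty$, yields $\|\nabla u\|_{t}^{t}\le C\|u\|_{t}\|\nabla u\|_{t}^{t-2}\|\Delta u\|_{t}$, from which the asserted square-root interpolation follows by rearrangement and density. For $1<t<2$ this computation breaks (the H\"older exponent $t/(t-2)$ becomes negative), so one must appeal to the Riesz-transform route of the previous paragraph.

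\textbf{Main obstacle.} The genuinely delicate part of the general theorem lies in the endpoint regime, $q=\infty$, $r=1$ or $p=1$, where Riesz transforms and Calder\'on--Zygmund bounds are unavailable; there one must revert to Nirenberg's original one-dimensional argument, slicing along coordinate lines and applying Fubini, which is the technical core of the Fiorenza--Formica--Roskovec--Soudsk\'y reference quoted in the statement. For the application made in the present paper, however, only the interior regime with $t\in(1,\infty)$ is required, so either of the two routes sketched above is sufficient for what follows.
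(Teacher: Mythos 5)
The paper does not prove Lemma~\ref{lem2.7}: it simply cites Fiorenza--Formica--Roskovec--Soudsk\'y \cite{MR4237368} for the general Gagliardo--Nirenberg inequality and its consequence. Your proposal, by contrast, actually sketches a proof, so the two texts are doing different things; the useful comparison is not ``same argument vs.\ different argument'' but ``citation vs.\ self-contained proof.''

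Your general route -- density, Riesz transforms to reduce $\nabla^j$ to fractional powers of $-\Delta$, then Littlewood--Paley/Bernstein plus optimisation of the dyadic cutoff -- is a standard and valid way to obtain the homogeneous interpolation estimate when $1<q,r,p<\infty$, which indeed covers everything the paper ever uses ($j=1$, $k=2$, $p=q=r=t\in\{p,n/2\}\subset(1,\infty)$). The integration-by-parts argument for $t\ge 2$ is correct as written: the pointwise bound of both terms in $\operatorname{div}(|\nabla u|^{t-2}\nabla u)$ by $|\nabla u|^{t-2}|D^2u|$, the three-factor H\"older with exponents $(t,\,t/(t-2),\,t)$, and the Calder\'on--Zygmund bound $\|D^2u\|_t\lesssim\|\Delta u\|_t$ fit together exactly as claimed, giving $\|\nabla u\|_t^2\lesssim\|u\|_t\|\Delta u\|_t$ after dividing through. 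You also correctly flag that this shortcut fails for $1<t<2$ and that the endpoint exponents $q=\infty$, $r=1$, $p=1$ in the general statement lie outside the Riesz-transform/CZ framework and require Nirenberg's slicing argument (which is precisely what the cited reference supplies).

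One small inaccuracy worth fixing: in the last sentence you conclude that ``either of the two routes sketched above is sufficient for what follows.'' That is not quite right, since the paper applies the lemma with $t=p$ where $p$ may lie in $(1,2)$, and the elementary integration-by-parts route only covers $t\ge 2$. Only the Riesz-transform/Littlewood--Paley route covers the full interior range $(1,\infty)$ required here, so the final remark should be restricted to that route (or the $t\ge 2$ argument should be supplemented, e.g.\ by regularising $|\nabla u|$ as $(\varepsilon^2+|\nabla u|^2)^{(t-2)/2}$ before integrating by parts, which salvages the elementary proof for all $1<t<\infty$).
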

Next, for the convenience of the reader, we include a proof of the following result.
\begin{lemma}\label{lem2.8}
 Suppose $\{u_k\}_k\subset E$ is a sequence satisfying $u_k\to u$ in $E$ as $k\to\infty$. Then there exists a subsequence $\{u_{k_j}\}_j$  in $E$ and a nonegative function $g\in E$ such that
 \begin{align*}
     u_{k_j}(x)\to u(x)\quad\text{a.e. in}\quad \mathbb{R}^n \quad\text{as}\quad j\to\infty
 \end{align*}
 and
 \begin{align*}
    | u_{k_j}(x)|\leq g(x)\quad\text{a.e. in}\quad \mathbb{R}^n \quad\text{for each}\quad j\in \mathbb{N}.
 \end{align*}
\end{lemma}
\begin{proof}
  It follows from $u_k\to u$ in $E$ as $k\to\infty$ and Lemma \ref{lem2.5} that for any subsequence $\{u_{k_j}\}_j$ in $E$, we have  $u_{k_j}\to u$ in $L^\theta(\mathbb{R}^n)$ as $k\to\infty$ for all $\theta\in[ p^\ast,+\infty)$. Hence, one can easily deduce that $u_{k_j}(x)\to u(x)$ a.e. in $\mathbb{R}^n$ as $j\to\infty$. Now, we denote $f_k:=-\Delta u_k$ and $f:=-\Delta u$. This infers that $f_k\to f$ in $L^t(\mathbb{R}^n)$ as $k\to\infty$ for $t\in\{p,\frac{n}{2}\}$. Choose a subsequence $\{u_{k_j}\}_j$ in $E$ such that
  \begin{align*}
      \|f_{k_j}-f\|_p+\|f_{k_j}-f\|_{\frac{n}{2}}\leq 2^{-j}\quad\text{for all}\quad j\in\mathbb{N}.
  \end{align*}
  This shows that
  \begin{align*}
      \sum_{j=1}^{\infty}\left(\|f_{k_j}-f\|_p+\|f_{k_j}-f\|_{\frac{n}{2}}\right)=\sum_{j=1}^{\infty} 2^{-j}<+\infty.
  \end{align*}
  Define
  \begin{align*}
      F(x):=\sum_{j=1}^{\infty}\left|f_{k_j}(x)-f\right(x)|\quad\text{for all}\quad x\in \mathbb{R}^n.
  \end{align*}
In virtue of Minkowski's inequality, we deduce that $F\in L^t(\mathbb{R}^n)$ for $t\in\{p,\frac{n}{2}\}$. Let $\Phi\colon \mathbb{R}^n\setminus\{0\}\to[0,+\infty)$ be defined by
  \begin{align*}
      \Phi(x):=\frac{1}{(n-2)\omega_{n-1}}\,|x|^{2-n}\quad\text{for all}\quad x\in \mathbb{R}^n\setminus\{0\}\quad\text{and}\quad n\geq4,
  \end{align*}
which is the fundamental solution of $-\Delta$, namely, there holds $-\Delta\Phi=\delta_0$ in $\mathcal{D}^\prime(\mathbb{R}^n)$,
where $\mathcal{D}^\prime(\mathbb{R}^n)$ denotes the space of distributions on $\mathbb{R}^n$. By convolving both the sides with $u$, we get $-\Delta\Phi\ast u=\delta_0 \ast u$. Recalling that $\delta_0 \ast u=u$, one has
\begin{align*}
    u=-\Delta\Phi\ast u=\Phi\ast -\Delta u.
\end{align*}
In particular, we have 
\begin{align*}
    u=\Phi\ast f\qquad\text{and }\qquad u_{k_j}=\Phi\ast f_{k_j}.
\end{align*}
We again define
$$g:=\Phi\ast(|f|+F). $$
By using $\Phi\geq0$, we obtain
\begin{align*}
    |u_{k_j}|=|\Phi\ast f_{k_j}|\leq \Phi\ast |f_{k_j}|\leq \Phi\ast (|f|+|f_{k_j}-f|)\leq \Phi\ast (|f|+F)=g.
\end{align*}
This yields that 
 \begin{align*}
    | u_{k_j}(x)|\leq g(x)\quad\text{a.e. in}\quad \mathbb{R}^n \quad\text{for each}\quad j\in \mathbb{N}.
 \end{align*}
 Moreover, there also holds
 \begin{align*}
     -\Delta g=-\Delta(\Phi\ast(|f|+F))=-\Delta\Phi\ast (|f|+F)=|f|+F\in L^t(\mathbb{R}^n)\quad\text{for}\quad t\in\left\{p,\frac{n}{2}\right\}.
 \end{align*}
 This implies that $g\in E$ and thus, the proof is completed at this point.
\end{proof}
\section{Sharp singular Adams' type inequality: Proof of Theorem \ref{thm1.1}}\label{sec3}
In this section, our aim is to prove Theorem \ref{thm1.1}. To establish Theorem \ref{thm1.1}, we shall use a rearrangement-free approach introduced by Lam-Lu \cite{Lam-Lu-2013}. Furthermore, as a consequence of Theorem \ref{thm1.1} and more specifically inspired by Nguyen \cite{Nguyen-2019}, we shall also demonstrate the $L^1$-integrability of the Young function \eqref{eq1.55} with singular weights on $\mathbb{R}^n$ in the desired function space. 
\begin{proof}[\bf{Proof of Theorem \ref{thm1.1}}.]
Invoking the fact that $ \Phi_{\alpha,j_0}(s)= \Phi_{\alpha,j_0}(|s|)$ and exploiting the denseness of $C^\infty_0(\mathbb{R}^n)$ in $E$, without loss of generality, we can take that $u\in C^\infty_0(\mathbb{R}^n)\setminus\{0\}$ with $u\geq 0$ and $\|u\|\leq 1$. Define the following
$$H(u):=2^{-\frac{4}{n(n-2)}}\|\Delta u\|_p\quad\text{and}\quad~S:=\{x\in \mathbb{R}^n\colon~u(x)>H(u)\}.$$
It follows at once that
\begin{equation}\label{eq3.1}
    H(u)<1\quad\text{and}\quad |S|\leq 2^{\frac{4p^\ast}{n(n-2)}}C,
\end{equation}
where $C>0$ is a suitable constant depending only on $p$ and $n$,  thanks to the chain of embeddings $E\hookrightarrow D^{2,p}(\mathbb{R}^n)\hookrightarrow L^{p^\ast}(\mathbb{R}^n)$. This shows that $S$ is a bounded domain. In addition, we can notice that
$$\int_{\mathbb{R}^n}\frac{\Phi_{\alpha,j_0}(u)}{|x|^\gamma}\,\mathrm{d}x=\int_{S}\frac{\Phi_{\alpha,j_0}(u)}{|x|^\gamma}\,\mathrm{d}x+\int_{\mathbb{R}^n\setminus S}\frac{\Phi_{\alpha,j_0}(u)}{|x|^\gamma}\,\mathrm{d}x:=J_1+J_2. $$
Initially, we shall evaluate $J_2$. By using \eqref{eq3.1}, we have $\mathbb{R}^n\setminus S\subset \{x\in \mathbb{R}^n:~u(x)<1\}$. Due to Lemma \ref{lem2.5} and the fact that $\|u\|\leq 1$, we obtain
\begin{equation}\label{eq3.2}
\begin{aligned}
J_2 & \leq \int_{\{x\in \mathbb{R}^n\colon~ u(x)<1\}}\frac{1}{|x|^\gamma}\sum_{j=j_0}^\infty \frac{\alpha^j}{j!}|u|^{\frac{nj}{n-2}}\,\mathrm{d}x \leq \int_{\{x\in \mathbb{R}^n\colon~ u(x)<1\}}\frac{1}{|x|^\gamma}\sum_{j=j_0}^\infty \frac{\alpha^j}{j!}|u|^{p^\ast}\,\mathrm{d}x \\ 
 & \leq \operatorname{\exp}(\alpha)\Bigg[ \int_{\{x\in \mathbb{R}^n\colon~ u(x)<1,~|x|<1\}}\frac{|u|^{p^\ast}}{|x|^\gamma}\,\mathrm{d}x+  \int_{\{x\in \mathbb{R}^n\colon~u(x)<1,~|x|\geq1\}}\frac{|u|^{p^\ast}}{|x|^\gamma}\,\mathrm{d}x\Bigg]\\ 
& \leq \operatorname{\exp}(\alpha)\Bigg[ \int_{\{ x\in \mathbb{R}^n\colon~|x|<1\}}\frac{1}{|x|^\gamma}\,\mathrm{d}x+ \int_{\{ x\in \mathbb{R}^n\colon~|x|\geq1\}}|u|^{p^\ast}\,\mathrm{d}x\Bigg]\\
&\leq C(\alpha,\gamma,p,n),
\end{aligned}
\end{equation}
where $C(\alpha,\gamma,p,n)>0$ is a constant depending only on  $\alpha$, $\gamma$, $p$ and $n$. To evaluate $J_1$, we first denote $w(x):=u(x)-H(u)$ in $S$. It is easy to see that $w\in W^{2,\frac{n}{2}}_\mathcal{N}(S)$. As a result, we have the following estimates 
\begin{align*}
    |u|^{\frac{n}{n-2}} &\leq  (|w|+H(u))^{\frac{n}{n-2}} \leq |w|^{\frac{n}{n-2}}+\frac{n}{n-2} 2^{\frac{n}{n-2}-1}\bigg[|w|^{\frac{n}{n-2}-1}H(u)+|H(u)|^{\frac{n}{n-2}}\bigg]\\
    & \leq |w|^{\frac{n}{n-2}}+\frac{n}{n-2} 2^{\frac{n}{n-2}-1}\Bigg[\frac{|w|^{\frac{n}{n-2}}|H(u)|^{\frac{n}{2}}}{\frac{n}{2}}+\frac{n-2}{n}+|H(u)|^{\frac{n}{n-2}}\Bigg] \\
    & \leq \bigg[1+\frac{2^{\frac{n}{n-2}}}{n-2}|H(u)|^{\frac{n}{2}}\bigg]|w|^{\frac{n}{n-2}}+D_n\quad\text{in}~~ S,
\end{align*}
we thank to the well-known Young's inequality and the following elementary inequality of calculus
$$(a+b)^\wp\leq a^\wp+\wp 2^{\wp-1}(a^{\wp-1}b+b^\wp),~\forall~\wp\geq 1\quad\text{and}\quad a,b\geq 0.$$
Set $$\xi(x):=\bigg[1+\frac{2^{\frac{n}{n-2}}}{n-2}|H(u)|^{\frac{n}{2}}\bigg]^{\frac{n-2}{n}}w(x)\quad\text{in}~~ S.$$
Observe that $\xi\in W^{2,\frac{n}{2}}_\mathcal{N}(S)$ and thus, one has  $|u|^{\frac{n}{n-2}}\leq |\xi|^{\frac{n}{n-2}}+D_n$ in $S$. In addition, by direct calculation, we get
\begin{equation*}
    \|\Delta \xi\|^{\frac{n}{2}}_{L^{\frac{n}{2}}(S)}\leq \bigg[1+\frac{2^{\frac{n}{n-2}}}{n-2}|H(u)|^{\frac{n}{2}}\bigg]^{\frac{n-2}{2}}\big(1-\|\Delta u\|^{\frac{n}{2}}_p\big).
\end{equation*}
The above inequality together with the fact that $(1-x)^\wp\leq 1-\wp x$ for all $x\in[0,1]$ and $\wp\in(0,1]$ imply that 
\begin{align*}
 \|\Delta \xi\|^{\frac{n}{n-2}}_{L^{\frac{n}{2}}(S)}&\leq \bigg[1+\frac{2^{\frac{n}{n-2}}}{n-2}|H(u)|^{\frac{n}{2}}\bigg]\Big(1-{\frac{2}{n-2}}\|\Delta u\|^{\frac{n}{2}}_p\Big)\\ 
 &\leq \Big(1+{\frac{2}{n-2}}\|\Delta u\|^{\frac{n}{2}}_p\Big) \Big(1-{\frac{2}{n-2}}\|\Delta u\|^{\frac{n}{2}}_p\Big)\\ 
 &= 1-\frac{4}{(n-2)^2}\|\Delta u\|^n_p\leq 1.   
\end{align*}
It follows that $\xi\in W^{2,\frac{n}{2}}_\mathcal{N}(S)$ and $\|\Delta \xi\|_{L^{\frac{n}{2}}(S)}\leq 1$. Consequently, by employing the singular Adams' inequality established in \cite[Theorem 1.2]{Lam-Lu-2012@@@} with homogeneous Navier boundary condition, we obtain 
\begin{equation}\label{eq3.3}
\begin{aligned}
 J_1 & \leq \int_S\frac{\operatorname{\exp}(\alpha |u|^{\frac{n}{n-2}})}{|x|^\gamma}\,\mathrm{d}x\leq \int_S\frac{\operatorname{\exp}(\beta_{\gamma,n} (|\xi|^{\frac{n}{n-2}}+D_n))}{|x|^\gamma}\,\mathrm{d}x \\ 
 &\leq \operatorname{\exp}(\beta_{\gamma,n}D_n)C(\gamma,n)|S|^{1-\frac{\gamma}{n}}\leq C(\gamma,p,n),
\end{aligned}
\end{equation}
where $C(\gamma,p,n)>0$ is a constant depending only on  $\gamma$, $p$ and $n$. In virtue of \eqref{eq3.2} and \eqref{eq3.3}, we infer that \eqref{eq1.12} holds. Next, to check the validity of the sharpness of $\beta_{\gamma,n}$, inspired by Lu-Yang \cite{Lu-Yang-2009}, we can define the following sequence
\begin{align}\label{eq3.4}
    \xi_k(x):=\begin{cases}
        \Big(\frac{\ln k}{\beta(n,2)}\Big)^{1-\frac{2}{n}}+(1-k^{\frac{2}{n}}|x|^2)\frac{n \beta(n,2)^{\frac{2}{n}-1}}{2(\ln k)^{\frac{2}{n}}}\qquad\text{if}~|x|\in [0,k^{-\frac{1}{n}}],\\ 
          n\beta(n,2)^{\frac{2}{n}-1} (\ln k)^{-\frac{2}{n}}\ln\big(\frac{1}{|x|}\big)\quad\quad\quad\quad\quad\quad\text{if}~|x|\in [k^{-\frac{1}{n}},1], \\   \eta_k(x)\quad\quad\quad\quad\quad\quad\quad\quad\quad\quad\quad\quad\quad\quad\quad\quad\text{if}~|x|\in [1,+\infty),
    \end{cases}
\end{align}
where $\{\eta_k\}_k$ is a sequence of radially smooth functions such that
$$ \text{supp}(\eta_k)\subset \{x\in \mathbb{R}^n\colon~1<|x|<2 \}\quad\text{and}\quad \frac{\partial \eta_k}{\partial \nu}\Big|_{\partial B_1}=n\beta(n,2)^{\frac{2}{n}-1} (\ln k)^{-\frac{2}{n}} .$$
Moreover, $\eta_k$ and $\Delta \eta_k$ are all of $O((\ln k)^{-\frac{2}{n}})$. By direct calculations, one can notice that 
\begin{align*}
    \Delta \xi_k(x):=\begin{cases}
       -n^2\beta(n,2)^{\frac{2}{n}-1}k^{\frac{2}{n}} (\ln k)^{-\frac{2}{n}}\quad\quad\quad\quad\quad\quad\text{if}~|x|\in (0,k^{-\frac{1}{n}}),\\ 
        - n(n-2)\beta(n,2)^{\frac{2}{n}-1} (\ln k)^{-\frac{2}{n}}|x|^{-2}\quad\quad\text{if}~|x|\in (k^{-\frac{1}{n}},1), \\  \Delta\eta_k(x)\quad\quad\quad\quad\quad\quad\quad\quad\quad\quad\quad\quad\quad\quad\text{if}~|x|\in (1,+\infty).
    \end{cases}
\end{align*}
Hence, we have 
\begin{align*}
    \|\Delta \xi_k\|_p^p &=\int_{\{x\in\mathbb{R}^n\colon~1<|x|<2\}} |\Delta\eta_k|^p\,\mathrm{d}x+\omega_{n-1}\int_{0}^{k^{-\frac{1}{n}}}\big( n^2\beta(n,2)^{\frac{2}{n}-1}k^{\frac{2}{n}} (\ln k)^{-\frac{2}{n}}\big)^p r^{n-1}\,\mathrm{d}r\\
    & \quad+\omega_{n-1}\int_{k^{-\frac{1}{n}}}^{1}\big( n(n-2)\beta(n,2)^{\frac{2}{n}-1} (\ln k)^{-\frac{2}{n}}r^{-2}\big)^p r^{n-1}\,\mathrm{d}r\\
    &=O((\ln k)^{-\frac{2p}{n}})+\frac{\omega_{n-1}}{nk}\bigg( n^2\beta(n,2)^{\frac{2}{n}-1}\bigg(\frac{k}{\ln k}\bigg)^{\frac{2}{n}}\bigg)^p\\ &\quad +\frac{\omega_{n-1}}{n-2p}\big( n(n-2)\beta(n,2)^{\frac{2}{n}-1} (\ln k)^{-\frac{2}{n}}\big)^p\bigg(1-\frac{1}{k^{\frac{n-2p}{n}}}\bigg)\\
    &\leq O((\ln k)^{-\frac{2p}{n}})+\frac{\omega_{n-1}}{nk}\bigg( n^2\beta(n,2)^{\frac{2}{n}-1}\bigg(\frac{k}{\ln k}\bigg)^{\frac{2}{n}}\bigg)^p\\ &\quad +\frac{\omega_{n-1}}{n-2p}\big( n(n-2)\beta(n,2)^{\frac{2}{n}-1} (\ln k)^{-\frac{2}{n}}\big)^p\\
    &= O((\ln k)^{-\frac{2p}{n}})+O((\ln k)^{-\frac{2p}{n}})~\frac{1}{k^{\frac{n-2p}{n}}}
\end{align*}
and 
\begin{align*}
    \|\Delta \xi_k\|_{\frac{n}{2}}^{\frac{n}{2}} &=\int_{\{x\in\mathbb{R}^n\colon~1<|x|<2\}} |\Delta\eta_k|^{\frac{n}{2}}\,\mathrm{d}x+\omega_{n-1}\int_{0}^{k^{-\frac{1}{n}}}\big( n^2\beta(n,2)^{\frac{2}{n}-1}k^{\frac{2}{n}} (\ln k)^{-\frac{2}{n}}\big)^{\frac{n}{2}} r^{n-1}\,\mathrm{d}r\\
    & \quad+\omega_{n-1}\int_{k^{-\frac{1}{n}}}^{1}\big( n(n-2)\beta(n,2)^{\frac{2}{n}-1} (\ln k)^{-\frac{2}{n}}r^{-2}\big)^{\frac{n}{2}} r^{n-1}\,\mathrm{d}r\\
    &=O((\ln k)^{-1})+\frac{\omega_{n-1}}{nk}\bigg( n^2\beta(n,2)^{\frac{2}{n}-1}\bigg(\frac{k}{\ln k}\bigg)^{\frac{2}{n}}\bigg)^{\frac{n}{2}}\\&\quad +\omega_{n-1}\big( n(n-2)\beta(n,2)^{\frac{2}{n}-1} (\ln k)^{-\frac{2}{n}}\big)^{\frac{n}{2}}\frac{\ln k}{n}\\
    &=O((\ln k)^{-1})+\frac{\omega_{n-1}}{n}\big( n(n-2)\beta(n,2)^{\frac{2}{n}-1}\big)^{\frac{n}{2}}=O((\ln k)^{-1})+1.
\end{align*}
This yields at once that 
$$\|\xi_k\|^{\frac{n}{2}}_{\frac{n}{2}}=\|\Delta \xi_k\|_{\frac{n}{2}}^{\frac{n}{2}}+\|\Delta \xi_k\|_p^{\frac{n}{2}}\to 1\quad\text{as}~k\to\infty .$$
It follows that $\|\xi_k\|\to 1$ as $k\to\infty$. Let us assume $\alpha>\beta_{\gamma,n}$, then there exists $\delta>0$ such that $\alpha=(1+\delta)\beta_{\gamma,n}$. Moreover, one has
\begin{align*}
    \sup_{u\in E,~\|u\|\leq 1}\int_{\mathbb{R}^n}\frac{\Phi_{\alpha,j_0}(u)}{|x|^\gamma}\,& \mathrm{d}x \geq \int_{\{x\in\mathbb{R}^n\colon~|x|< k^{-\frac{1}{n}} \}}\frac{\Phi_{\alpha,j_0}(\xi_k/\|\xi_k\|)}{|x|^\gamma}\,\mathrm{d}x \\
    & =\omega_{n-1} \int_{0}^{k^{-\frac{1}{n}}}\frac{\Phi_{\alpha,j_0}\bigg(\frac{1}{\|\xi_k\|}\bigg(\Big(\frac{\ln k}{\beta(n,2)}\Big)^{1-\frac{2}{n}}+(1-k^{\frac{2}{n}}r^2)\frac{n \beta(n,2)^{\frac{2}{n}-1}}{2(\ln k)^{\frac{2}{n}}}\bigg)\bigg)}{r^{\gamma-n+1}}\,\mathrm{d}r\\
    &\geq \frac{\omega_{n-1}}{(n-\gamma)k^{\frac{n-\gamma}{n}}}~\Phi_{\alpha,j_0}\bigg(\frac{1}{\|\xi_k\|}\bigg(\frac{\ln k}{\beta(n,2)}\bigg)^{1-\frac{2}{n}}\bigg) \\
    &\sim \frac{\omega_{n-1}}{(n-\gamma)k^{\frac{n-\gamma}{n}}}~\operatorname{\exp}\Bigg(\alpha \Bigg|\frac{1}{\|\xi_k\|}\bigg(\frac{\ln k}{\beta(n,2)}\bigg)^{1-\frac{2}{n}}\Bigg|^{\frac{n}{n-2}}\Bigg)\\
    & =\frac{\omega_{n-1}}{(n-\gamma)}~\operatorname{\exp}\Bigg[\bigg(1-\frac{\gamma}{n}\bigg)\ln k\bigg(\frac{1+\delta}{\|\xi_k\|^{\frac{n}{n-2}}}-1\bigg)\Bigg]\to+\infty\quad\text{as}\quad k\to\infty.\end{align*} This completes the proof of Theorem \ref{thm1.1}.
\end{proof}
To end this section, we prove the following result, which is a byproduct of the previously discussed theorem.
\begin{corollary}\label{cor3.1}
Let $1<p<\frac{n}{2}$, $0\leq \gamma<n$ and $n\geq 4$ be hold. Then for all $\alpha\geq 0$ and $u\in E$, there holds
\begin{align*}
    \frac{\Phi_{\alpha,j_0}(u)}{|x|^\gamma}\in L^1(\mathbb{R}^n).
\end{align*}    
\end{corollary}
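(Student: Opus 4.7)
The plan is to reduce to Theorem \ref{thm1.1} by a homogeneity/scaling step and to handle the leftover range via a level-set decomposition of $|u|$ combined with the compact weighted embedding of Theorem \ref{thm1.4}. The cases $\alpha=0$ or $u\equiv 0$ are immediate since $\Phi_{0,j_0}\equiv 0$. Otherwise, setting $v:=u/\|u\|$ (so that $\|v\|=1$) and exploiting the homogeneity
\[
\Phi_{\alpha,j_0}(u)=\Phi_{\tilde\alpha,j_0}(v),\qquad \tilde\alpha:=\alpha\|u\|^{\frac{n}{n-2}},
\]
the task reduces to bounding $\int_{\mathbb{R}^n}\Phi_{\tilde\alpha,j_0}(v)/|x|^\gamma\,\mathrm{d}x$ for each $\tilde\alpha>0$ and each $v\in E$ with $\|v\|=1$. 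When $\tilde\alpha\le \beta_{\gamma,n}$, this is an immediate consequence of Theorem \ref{thm1.1}, since the integral lies below the finite supremum taken over the unit ball of $E$.

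For the regime $\tilde\alpha>\beta_{\gamma,n}$, I split the domain along the level sets of $|v|$, choosing a cutoff $K>0$. On $\{|v|\le K\}$, factoring out the leading power yields the polynomial estimate
\[
\Phi_{\tilde\alpha,j_0}(v)\le |v|^{\frac{nj_0}{n-2}}\sum_{j\ge j_0}\frac{\tilde\alpha^j K^{\frac{n(j-j_0)}{n-2}}}{j!}=:C(\tilde\alpha,K)\,|v|^{\frac{nj_0}{n-2}}.
\]
Since $nj_0/(n-2)\ge p^\ast$ by the definition of $j_0$ in \eqref{eq1.55}, the compact embedding $E\hookrightarrow L^{nj_0/(n-2)}(\mathbb{R}^n,|x|^{-\gamma}\,\mathrm{d}x)$ supplied by Theorem \ref{thm1.4} renders this contribution finite. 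On the hot set $\{|v|>K\}$, which by Chebyshev has Lebesgue measure at most $K^{-p^\ast}\|v\|_{p^\ast}^{p^\ast}$ and hence shrinks as $K\to\infty$, I intend to apply the bounded-domain singular Adams' inequality of Lam-Lu \cite{MR3039840} to the shifted function $w:=(|v|-K)^+\operatorname{sign}(v)$. For $K$ large, $\|\Delta w\|_{L^{n/2}(\{|v|>K\})}$ becomes arbitrarily small by integrability of $|\Delta v|^{n/2}$, bringing the effective Adams' exponent for $w$ below $\beta_{\gamma,n}$; the elementary bound $|v|^{n/(n-2)}\le 2^{\frac{n}{n-2}-1}(|w|^{n/(n-2)}+K^{n/(n-2)})$ then recovers the original exponential integral modulo a bounded multiplicative factor.

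The principal technical obstacle is the limited regularity of $w$, which is only Lipschitz across the level surface $\{|v|=K\}$ and does not automatically belong to $W^{2,n/2}_{\mathcal{N}}(\{|v|>K\})$ in the strong sense. I would resolve this by first establishing the bound for $v\in C_c^\infty(\mathbb{R}^n)$, where $\{|v|>K\}$ is a bounded open set with smooth boundary for generic $K$ (by Sard's theorem) and $w$ enjoys vanishing Navier trace there, and then transferring the estimate to general $v\in E$ via density of $C_c^\infty(\mathbb{R}^n)$ in $E$ combined with a pointwise almost-everywhere convergent subsequence and Fatou's lemma.
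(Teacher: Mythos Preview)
Your approach differs substantially from the paper's and contains a gap in the density step. The paper does not truncate $u$ by its own level sets; instead it chooses $\psi\in C_c^\infty(\mathbb{R}^n)$ with $\|u-\psi\|<\varepsilon$, splits $\mathbb{R}^n$ according to whether $|u-\psi|\le 1$ or $|u-\psi|>1$, and on the latter set applies Theorem~\ref{thm1.1} directly to the \emph{difference} $u-\psi$, whose $E$-norm is below $\varepsilon$. Choosing $\varepsilon,\eta$ so that $2\alpha(1+\eta)\varepsilon^{n/(n-2)}\le\beta_{\gamma,n}$ brings the effective exponent into the admissible range, while the $\psi$-contribution is dispatched trivially by compact support. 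This bypasses both the level-set regularity question for $w$ and the bounded-domain Lam--Lu inequality entirely; it is shorter and uses only Theorem~\ref{thm1.1} as a black box.

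Your level-set route (which is actually closer to the proof of Theorem~\ref{thm1.1} itself) does work for each fixed smooth $v$, but the closing Fatou passage is incomplete as written: the bound you obtain for $\int_{\mathbb{R}^n}\Phi_{\tilde\alpha,j_0}(v_j)/|x|^\gamma\,\mathrm{d}x$ depends on $v_j$ through the choice of $K$ and through $|\{|v_j|>K\}|$, and Fatou yields a finite limit only if the bound is \emph{uniform} in $j$. This is repairable---strong convergence $\Delta v_j\to\Delta v$ in $L^{n/2}(\mathbb{R}^n)$ gives equi-integrability of $\{|\Delta v_j|^{n/2}\}_j$, and $|\{|v_j|>K\}|\le CK^{-p^\ast}$ uniformly via Lemma~\ref{lem2.5}, so a single $K$ works for all large $j$---but you must make this explicit. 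A minor additional point: Theorem~\ref{thm1.4} is stated only for $\gamma\in(0,n)$, so for $\gamma=0$ you should invoke Lemma~\ref{lem2.5} instead (you need only the continuous embedding, not compactness).
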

\begin{proof}
By employing the denseness of $C^\infty_0(\mathbb{R}^n)$ in $E$, without loss of generality, for a given $\varepsilon>0$, one can assume $\psi\in C^\infty_0(\mathbb{R}^n)$ such that $\|u-\psi\|<\varepsilon$. Moreover, we define two sets $S_1$ and $S_2$ as follows $$S_1:=\{x\in \mathbb{R}^n\colon~|u(x)-\psi(x)|\leq 1\}\quad\text{and}\quad S_2:=\{x\in \mathbb{R}^n\colon~|u(x)-\psi(x)|> 1\}.$$ Notice that $|u(x)|\leq 1+\sup_{x\in \mathbb{R}^n}|\psi(x)|
:=c$ for all $x\in S_1$, where $c>0$ is a suitable constant. By employing Lemma \ref{lem2.5} and using the fact that $\frac{nj_0}{n-2}\geq p^\ast$, we have
\begin{equation}\label{eq3.5}
\begin{aligned}
       \int_{S_1}\frac{\Phi_{\alpha,j_0}(u)}{|x|^\gamma}\,\mathrm{d}x&=\int_{S_1}\sum_{j=j_0}^{\infty}\frac{\big(\alpha c^{\frac{n}{n-2}}\big)^j}{j!}\bigg|\frac{u}{c}\bigg|^{\frac{nj}{n-2}}\,\frac{\mathrm{d}x}{|x|^\gamma}\leq \sum_{j=j_0}^{\infty}\frac{\big(\alpha c^{\frac{n}{n-2}}\big)^j}{j!}\int_{S_1}\bigg|\frac{u}{c}\bigg|^{\frac{nj_0}{n-2}}\,\frac{\mathrm{d}x}{|x|^\gamma}\\ &
       \lesssim  \int_{S_1}\frac{|u|^{\frac{nj_0}{n-2}}}{|x|^\gamma}\,\mathrm{d}x\lesssim \int_{\{x\in \mathbb{R}^n:~|x|<1\}} \frac{1}{|x|^\gamma} \,\mathrm{d}x+\int_{\{x\in \mathbb{R}^n:~|x|\geq 1\}} |u|^{\frac{nj_0}{n-2}} \,\mathrm{d}x<+\infty.
   \end{aligned}
   \end{equation}
   Recall that for a given $\eta>0$ and $\wp>1$, there holds
   \begin{align}\label{eq3.6}
     (a+b)^\wp\leq (1+\eta)a^\wp+C_\eta b^\wp,~\forall~a,b>0,  
\end{align}
where $C_\eta=\big(1-(1+\eta)^{-\frac{1}{\wp-1}}\big)^{1-\wp}$. Moreover, by using \eqref{eq3.6} and Young's inequality, one has
\begin{align*}
   \int_{S_2}\frac{\Phi_{\alpha,j_0}(u)}{|x|^\gamma}\,\mathrm{d}x & \leq \int_{S_2}\frac{\operatorname{\exp}(\alpha|u|^{\frac{n}{n-2}})}{|x|^\gamma}\,\mathrm{d}x\leq \int_{S_2 \cap~ \text{supp}(\psi)}\frac{\operatorname{\exp}(\alpha(1+\eta)|u-\psi|^{\frac{n}{n-2}}+C_\eta\alpha|\psi|^{\frac{n}{n-2}})}{|x|^\gamma}\,\mathrm{d}x\\
   & \lesssim \int_{S_2 \cap~ \text{supp}(\psi)}\frac{\operatorname{\exp}(2\alpha(1+\eta)|u-\psi|^{\frac{n}{n-2}})}{|x|^\gamma}\,\mathrm{d}x+\int_{\text{supp}(\psi)}\frac{\operatorname{\exp}(2C_\eta\alpha|\psi|^{\frac{n}{n-2}})}{|x|^\gamma}\,\mathrm{d}x.
\end{align*}
Let $\eta,\varepsilon>0$ be sufficiently small enough such that $2\alpha(1+\eta)\varepsilon^{\frac{n}{n-2}}< \beta_{\gamma,n}$ holds. Hence, we obtain by using the definition of $S_2$ and Theorem \ref{thm1.1} that
\begin{align*}
\int_{S_2 \cap~ \text{supp}(\psi)}\frac{\operatorname{\exp}(2\alpha(1+\eta)|u-\psi|^{\frac{n}{n-2}})}{|x|^\gamma}\,\mathrm{d}x &\lesssim \int_{\mathbb{R}^n}\frac{\Phi_{2\alpha(1+\eta),j_0}(u-\psi)}{|x|^\gamma}\,\mathrm{d}x\\
&\lesssim\int_{\mathbb{R}^n}\frac{\Phi_{\beta_{\gamma,n},j_0}(u-\psi/\|u-\psi\|)}{|x|^\gamma}\,\mathrm{d}x <+\infty.  
\end{align*}
By using $\psi$ has compact support, we obtain
$$ \int_{\text{supp}(\psi)}\frac{\operatorname{\exp}(2C_\eta\alpha|\psi|^{\frac{n}{n-2}})}{|x|^\gamma}\,\mathrm{d}x<+\infty. $$
This immediately implies that
\begin{align}\label{eq3.7}
      \int_{S_2}\frac{\Phi_{\alpha,j_0}(u)}{|x|^\gamma}\,\mathrm{d}x<+\infty.  
\end{align}
Based on \eqref{eq3.5} and \eqref{eq3.7}, we infer that the assertion of the corollary is well-established.   
\end{proof}
\section{Sharp singular concentration-compactness principle: Proof of Theorem \ref{thm1.2}}\label{sec4}
This section focuses on proving the concentration-compactness principle of the singular Adams' type inequality stated in the previous section (see Theorem \ref{thm1.1}), which relies on conventional Schwarz symmetrization techniques. More information for a detailed analysis in this direction can be found in \cite{do O-Macedo-2014, Nguyen-2019}. The proof of Theorem \ref{thm1.2} is based on the following crucial lemma. More precisely, we have 
\begin{lemma}\label{lem3.2}
  Let the hypotheses of Theorem \ref{thm1.2} be satisfied. Then there exists a sequence $\{\overset{\sim}{u}_k\}_k\subset C^\infty_0(\mathbb{R}^n)$ such that $\|\overset{\sim}{u}_k\|=1$ and $\overset{\sim}{u}_k\rightharpoonup u$ in $E$ as $k\to\infty$. In addition, for some suitable conatant $C$ independent of $k$, there holds
  \begin{align*}
   \sup_{k\in\mathbb{N}}\int_{\mathbb{R}^n}\frac{\Phi_{\ell \beta_{\gamma,n},j_0}(u_k)}{|x|^\gamma}\,\mathrm{d}x\leq C\sup_{k\in\mathbb{N}}\int_{\mathbb{R}^n}\frac{\Phi_{\ell^\prime \beta_{\gamma,n},j_0}(\overset{\sim}{u}_k)}{|x|^\gamma}\, \mathrm{d}x,~\forall~\ell^\prime\in(\ell,L_n(u)).\end{align*}
\end{lemma}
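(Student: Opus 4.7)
The plan decomposes into two tasks: constructing the smooth approximating sequence $\{\tilde u_k\}$, and then comparing the integrals. For the first task, I would exploit the density of $C_0^\infty(\mathbb{R}^n)$ in $E$: for each $k$, pick $\hat u_k \in C_0^\infty(\mathbb{R}^n)$ with $\|u_k - \hat u_k\| \leq 1/k$, and set $\tilde u_k := \hat u_k / \|\hat u_k\|$. Because $\|u_k\|=1$ and $\|u_k-\hat u_k\| \to 0$, we have $\|\hat u_k\| \to 1$, so $\tilde u_k$ is well-defined for large $k$ with $\|\tilde u_k\|=1$. Weak convergence $\tilde u_k \rightharpoonup u$ follows from the decomposition $\tilde u_k - u = (\tilde u_k - \hat u_k) + (\hat u_k - u_k) + (u_k - u)$: the first two summands tend to zero strongly in $E$ (the first because $\|\hat u_k\| \to 1$, the second by choice), while the third converges weakly to zero by hypothesis.

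For the integral comparison, set $w_k := u_k - \tilde u_k$, so $\|w_k\| \to 0$. Given $\ell<\ell'<L_n(u)$, fix $\eta>0$ small and a Hölder pair $(p,q)$, $1/p+1/q=1$, so that $p\ell(1+\eta) \leq \ell'$; this is feasible since $\ell<\ell'$ (choose $p$ slightly greater than $1$, close to $\ell'/\ell$, and $\eta$ small). Starting from the elementary inequality $(a+b)^{n/(n-2)} \leq (1+\eta) a^{n/(n-2)} + C_\eta b^{n/(n-2)}$ together with $\Phi_{\alpha,j_0}(s) \leq \exp(\alpha|s|^{n/(n-2)})$, factor
$$\Phi_{\ell\beta_{\gamma,n},j_0}(u_k) \leq \exp\bigl(\ell(1+\eta)\beta_{\gamma,n}|\tilde u_k|^{n/(n-2)}\bigr) \cdot \exp\bigl(\ell C_\eta \beta_{\gamma,n}|w_k|^{n/(n-2)}\bigr),$$
and apply Hölder with exponents $(p,q)$ against the measure $|x|^{-\gamma}\,\mathrm{d}x$ to separate the two contributions.

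The second factor (carrying $w_k$) is controlled via Theorem \ref{thm1.1}: since $\|w_k\| \to 0$, for $k$ large $q\ell C_\eta \|w_k\|^{n/(n-2)} \leq 1$, so applying the sharp singular Adams inequality to $w_k/\|w_k\|$ together with $\exp(\alpha|s|^{n/(n-2)}) = \Phi_{\alpha,j_0}(s) + \sum_{j=0}^{j_0-1} (\alpha^j/j!)|s|^{nj/(n-2)}$ yields a uniform bound; the polynomial remainder is absorbed via Lemma \ref{lem2.5}. For the first factor, the coefficient in the exponent is at most $\ell'\beta_{\gamma,n}$, and splitting $\exp = \Phi_{\ell'\beta_{\gamma,n},j_0}(\tilde u_k) + (\text{Taylor tail})$ produces the dominating integral $\int \Phi_{\ell'\beta_{\gamma,n},j_0}(\tilde u_k)/|x|^\gamma\,\mathrm{d}x$ plus polynomial pieces that are bounded uniformly in $k$. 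To convert the resulting additive bound into a multiplicative one matching the claim, I would use that $\sup_k \int \Phi_{\ell'\beta_{\gamma,n},j_0}(\tilde u_k)/|x|^\gamma\,\mathrm{d}x$ is bounded below by a strictly positive constant: indeed $\Phi_{\ell'\beta_{\gamma,n},j_0}(s) \gtrsim |s|^{nj_0/(n-2)}$ with $nj_0/(n-2) \geq p^*$, so the compact embedding $E \hookrightarrow L^{nj_0/(n-2)}(\mathbb{R}^n,|x|^{-\gamma}\mathrm{d}x)$ of Theorem \ref{thm1.4}, combined with $\tilde u_k \rightharpoonup u \neq 0$, supplies the required lower bound.

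The main obstacle is managing the Taylor-remainder polynomial terms that appear on both sides when interchanging $\exp$ and $\Phi_{\alpha,j_0}$. For exponents $nj/(n-2)$ with $j<j_0$, these integrals fall strictly below $p^*$ and are not directly covered by Theorem \ref{thm1.4}; they require splitting into the regions $\{|x|\leq 1\}$ and $\{|x|>1\}$, Lebesgue interpolation via Lemma \ref{lem2.5}, and careful use of the smallness of $\|w_k\|$ together with uniform boundedness of $\|\tilde u_k\|$. The second delicate point is tuning $\eta$, $p$ and $q$ simultaneously so that the sharp Adams threshold $\beta_{\gamma,n}$ is respected on the $w_k$-side, while still leaving usable slack $\ell'-\ell$ on the $\tilde u_k$-side.
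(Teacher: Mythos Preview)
Your construction of $\tilde u_k$ is correct and coincides with the paper's. The integral comparison, however, has a genuine gap. After you factor
\[
\Phi_{\ell\beta_{\gamma,n},j_0}(u_k)\le \exp\bigl(\ell(1+\eta)\beta_{\gamma,n}|\tilde u_k|^{\frac{n}{n-2}}\bigr)\cdot\exp\bigl(\ell C_\eta\beta_{\gamma,n}|w_k|^{\frac{n}{n-2}}\bigr)
\]
and apply H\"older against $|x|^{-\gamma}\,\mathrm{d}x$, each resulting factor is an integral of a full exponential over all of $\mathbb{R}^n$. Since $\exp(\cdot)\ge 1$ everywhere and $\int_{\mathbb{R}^n}|x|^{-\gamma}\,\mathrm{d}x=+\infty$ for $\gamma<n$, both factors are infinite; the bound is vacuous. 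Your proposed remedy of splitting $\{|x|\le 1\}$ versus $\{|x|>1\}$ and handling the Taylor polynomial pieces does not save this: the $j=0$ term in the remainder is the constant $1$, and $\int_{\{|x|>1\}}|x|^{-\gamma}\,\mathrm{d}x$ still diverges. This is exactly why one works with the truncated function $\Phi_{\alpha,j_0}$ rather than $\exp$ in $\mathbb{R}^n$, and the global H\"older step destroys that truncation.

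The paper circumvents this via a level-set decomposition instead of a global product estimate. It first splits off $\{|u_k|\le 2\}$, where the power series for $\Phi$ is dominated by a single monomial $|u_k|^{p^\ast}$ and the integral is bounded by a fixed constant. On $\{|u_k|>2\}$ it further splits according to whether $|u_k-\tilde u_k|\le 1$ and whether $|\tilde u_k|\ge 1$. The point of these cuts is that on each piece either the relevant function is bounded below by $1$ (so that $\exp(\alpha|s|^{n/(n-2)})\lesssim \Phi_{\alpha,j_0}(s)$ pointwise), or the integral can be bounded directly by Theorem~\ref{thm1.1} applied to $(u_k-\tilde u_k)/\|u_k-\tilde u_k\|$. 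Only after this localization can one safely pass between $\exp$ and $\Phi$; your argument needs an analogous decomposition before any H\"older or product step.
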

\begin{proof}
  By the standard density argument, we can assume a sequence $\{\psi_k\}_k\subset C^\infty_0(\mathbb{R}^n)$ such that $\|u_k-\psi_k\|<\frac{1}{k}$ for all $k\in\mathbb{N}$. Suppose that $\overset{\sim}{u}_k:=\frac{\psi_k}{\|\psi_k\|}$ for all $k\in\mathbb{N}$. It follows that the sequence $\{\overset{\sim}{u}_k\}_k\subset C^\infty_0(\mathbb{R}^n)$ and satisfying 
  \begin{align}\label{eq3.999}
   \|\overset{\sim}{u}_k\|=1, \quad\|u_k-\overset{\sim}{u}_k\|\leq \frac{2}{k},~\forall~k\in\mathbb{N}\quad\text{and}\quad\overset{\sim}{u}_k\rightharpoonup u\quad\text{in}\quad E\quad\text{as}\quad k\to\infty.   
  \end{align}
 Divide the whole space $\mathbb{R}^n$ as follows
  \begin{align*}
      S_1:=\{x\in \mathbb{R}^n\colon~|u_k(x)|\leq 2\}\quad\text{and}\quad S_2:=\{x\in \mathbb{R}^n\colon~|u_k(x)|> 2\}.
  \end{align*}
Notice that
$$\int_{\mathbb{R}^n}\frac{\Phi_{\ell \beta_{\gamma,n},j_0}(u_k)}{|x|^\gamma}\, \mathrm{d}x=\int_{S_1}\frac{\Phi_{\ell \beta_{\gamma,n},j_0}(u_k)}{|x|^\gamma}\, \mathrm{d}x+\int_{S_2}\frac{\Phi_{\ell \beta_{\gamma,n},j_0}(u_k)}{|x|^\gamma}\,\mathrm{d}x:=J_1+J_2. $$
Due to Lemma \ref{lem2.5}, we obtain for all $k\in\mathbb{N}$ that
\begin{equation}\label{eq3.8}
\begin{aligned}
J_1 & = \int_{S_1}\frac{1}{|x|^\gamma}\sum_{j=j_0}^\infty \frac{(2^{\frac{n}{n-2}}\ell\beta_{\gamma,n})^j}{j!}\Big|\frac{u_k}{2}\Big|^{\frac{nj}{n-2}}\, \mathrm{d}x \leq \int_{S_1}\frac{1}{|x|^\gamma}\sum_{j=j_0}^\infty \frac{(2^{\frac{n}{n-2}}\ell\beta_{\gamma,n})^j}{j!}\Big|\frac{u_k}{2}\Big|^{p^\ast}\,\mathrm{d}x \\ 
 & \leq \operatorname{\exp}(2^{\frac{n}{n-2}}\ell\beta_{\gamma,n})\Bigg[ \int_{\{x\in S_1\colon~|x|<1\}}\frac{|u_k|^{p^\ast}}{2^{p^\ast}|x|^\gamma}\,\mathrm{d}x+ \int_{\{x\in S_1\colon~|x|\geq1\}}\frac{|u_k|^{p^\ast}}{2^{p^\ast}|x|^\gamma}\, \mathrm{d}x\Bigg] \\ 
& \leq \operatorname{\exp}(2^{\frac{n}{n-2}}\ell\beta_{\gamma,n})\Bigg[ \int_{\{x\in S_1\colon~|x|<1\}}\frac{1}{|x|^\gamma}\,\mathrm{d}x+  \int_{\{x\in S_1\colon~|x|\geq1\}}|u_k|^{p^\ast}\, \mathrm{d}x\Bigg] \\ & \leq C(\gamma,\ell,p,n),
\end{aligned}
\end{equation}
where $C(\gamma,\ell,p,n)>0$ is a constant depending only on $\gamma,\ell,p$ and $n$ but independent on $k$. Further, we can also write
$$
 J_2=\int_{\Gamma_1}\frac{\Phi_{\ell \beta_{\gamma,n},j_0}(u_k)}{|x|^\gamma}\,\mathrm{d}x+\int_{\Gamma_2}\frac{\Phi_{\ell \beta_{\gamma,n},j_0}(u_k)}{|x|^\gamma}\,\mathrm{d}x:=J_{2,\Gamma_1}+J_{2,\Gamma_2},   
$$
where 
$$
  \Gamma_1:=\{x\in S_2\colon~|u_k(x)-\overset{\sim}{u}_k(x)|\leq 1\}\quad\text{and}\quad\Gamma_2:=\{x\in S_2\colon~|u_k(x)-\overset{\sim}{u}_k(x)|> 1\}.   
$$
By direct calculation, one has $|\overset{\sim}{u}_k(x)|>1$ for all $x\in \Gamma_1$ and $k\in\mathbb{N}$. In virtue of \eqref{eq3.6} with $\eta=\frac{\ell^\prime-\ell}{\ell}$, we get
\begin{equation}\label{eq3.9}
\begin{aligned}
    J_{2,\Gamma_1}& \leq \int_{\Gamma_1}\frac{\operatorname{\exp}(\ell\beta_{\gamma,n}|u_k|^{\frac{n}{n-2}})}{|x|^\gamma}\, \mathrm{d}x\leq \int_{\Gamma_1}\frac{\operatorname{\exp}(\ell^\prime\beta_{\gamma,n}|\overset{\sim}{u}_k|^{\frac{n}{n-2}}+C_\eta \ell\beta_{\gamma,n})}{|x|^\gamma}\, \mathrm{d}x \\  &\lesssim \int_{\mathbb{R}^n} \frac{\Phi_{\ell^\prime \beta_{\gamma,n},j_0}(\overset{\sim}{u}_k)}{|x|^\gamma}\, \mathrm{d}x\lesssim \sup_{k\in\mathbb{N}}\int_{\mathbb{R}^n}\frac{\Phi_{\ell^\prime \beta_{\gamma,n},j_0}(\overset{\sim}{u}_k)}{|x|^\gamma}\, \mathrm{d}x.
\end{aligned}
\end{equation}
We again split the integral $J_{2,\Gamma_2}$ as follows
$$
 J_{2,\Gamma_2}=\int_{\Gamma_{21}}\frac{\Phi_{\ell \beta_{\gamma,n},j_0}(u_k)}{|x|^\gamma}\, \mathrm{d}x+\int_{\Gamma_{22}}\frac{\Phi_{\ell \beta_{\gamma,n},j_0}(u_k)}{|x|^\gamma}\, \mathrm{d}x:=J_{2,\Gamma_{21}}+J_{2,\Gamma_{22}},   
$$
where 
$$\Gamma_{21}:=\Gamma_{2}\cap \{x\in \mathbb{R}^n\colon~|\overset{\sim}{u}_k(x)|< 1\}\quad\text{and}\quad\Gamma_{22}:=\Gamma_{2}\cap \{x\in \mathbb{R}^n\colon~|\overset{\sim}{u}_k(x)|\geq 1\}.   
$$
By utilizing the inequality \eqref{eq3.6} with $\eta=1$, we obtain from the definition of $\Gamma_2$ that
\begin{equation}\label{eq3.10}
\begin{aligned}
    J_{2,\Gamma_{21}}&  \leq \int_{\Gamma_{21}}\frac{\operatorname{\exp}(\ell\beta_{\gamma,n}|u_k|^{\frac{n}{n-2}})}{|x|^\gamma}\,\mathrm{d}x\leq \int_{\Gamma_{21}}\frac{\operatorname{\exp}(2\ell\beta_{\gamma,n}|u_k-\overset{\sim}{u}_k|^{\frac{n}{n-2}}+C_1 \ell\beta_{\gamma,n})}{|x|^\gamma}\, \mathrm{d}x \\  &\lesssim \int_{\mathbb{R}^n} \frac{\Phi_{2\ell \beta_{\gamma,n},j_0}(u_k-\overset{\sim}{u}_k)}{|x|^\gamma}\, \mathrm{d}x.
\end{aligned}
\end{equation}
Set $w_k:=\frac{u_k-\overset{\sim}{u}_k}{\|u_k-\overset{\sim}{u}_k\|}$ and take $k_0\in\mathbb{N}$ sufficiently large enough such that $2\ell\beta_{\gamma,n}\big(\frac{2}{k}\big)^{\frac{n}{n-2}}< \beta_{\gamma,n}$ for all $k\geq k_0$. Then by using \eqref{eq3.999}, Corollary \ref{cor2.6} and Theorem \ref{thm1.1}, we obtain for all $k\geq k_0$ that
$$ \int_{\mathbb{R}^n} \frac{\Phi_{2\ell \beta_{\gamma,n},j_0}(u_k-\overset{\sim}{u}_k)}{|x|^\gamma}\, \mathrm{d}x\leq \int_{\mathbb{R}^n} \frac{\Phi_{ \beta_{\gamma,n},j_0}(w_k)}{|x|^\gamma}\, \mathrm{d}x\leq \overset{\sim}{C},$$
for some suitable constant $\overset{\sim}{C}>0$, which is independent on the choice of $k$. In addition, by using Corollary \ref{cor3.1}, one has
$$ C=\max\bigg\{\overset{\sim}{C},\int_{\mathbb{R}^n} \frac{\Phi_{2\ell \beta_{\gamma,n},j_0}(u_1-\overset{\sim}{u}_1)}{|x|^\gamma}\, \mathrm{d}x,\dots,\int_{\mathbb{R}^n} \frac{\Phi_{2\ell \beta_{\gamma,n},j_0}(u_{k_0}-\overset{\sim}{u}_{k_0})}{|x|^\gamma}\, \mathrm{d}x\bigg\}<+\infty. $$
In conclusion, we deduce that
\begin{align}\label{eq3.11}
\int_{\mathbb{R}^n} \frac{\Phi_{2\ell \beta_{\gamma,n},j_0}(u_k-\overset{\sim}{u}_k)}{|x|^\gamma}\, \mathrm{d}x\leq C,~\forall~k\in\mathbb{N}.   
\end{align}
From \eqref{eq3.10} and \eqref{eq3.11}, one can notice that
\begin{align}\label{eq3.12}
    J_{2,\Gamma_{21}}= \int_{\Gamma_{21}}\frac{\Phi_{\ell \beta_{\gamma,n},j_0}(u_k)}{|x|^\gamma}\, \mathrm{d}x\leq C,~\forall~k\in\mathbb{N}, 
\end{align}
for some suitable constant $C>0$ independent on the choice of  $k$. Let $t,t^\prime>1$ be such $\frac{1}{t}+\frac{1}{t^\prime}=1$, where $t=\frac{2\ell^\prime}{\ell+\ell^\prime}$. Choose $\eta=\frac{\ell^\prime-\ell}{2\ell}$, then by using \eqref{eq3.6} and Young's inequality, we have
\begin{equation}\label{eq3.13}
\begin{aligned}
    J_{2,\Gamma_{22}}&  \leq \int_{\Gamma_{22}}\frac{\operatorname{\exp}(\ell\beta_{\gamma,n}|u_k|^{\frac{n}{n-2}})}{|x|^\gamma}\, \mathrm{d}x\leq \int_{\Gamma_{22}}\frac{\operatorname{\exp}((1+\eta)\ell\beta_{\gamma,n}|\overset{\sim}{u}_k|^{\frac{n}{n-2}}+C_\eta \ell\beta_{\gamma,n}|u_k-\overset{\sim}{u}_k|^{\frac{n}{n-2}})}{|x|^\gamma}\, \mathrm{d}x \\ 
    & \leq \frac{1}{t}\int_{\Gamma_{22}}\frac{\operatorname{\exp}(\ell^\prime\beta_{\gamma,n}|\overset{\sim}{u}_k|^{\frac{n}{n-2}})}{|x|^\gamma}\, \mathrm{d}x+ \frac{1}{t^\prime}\int_{\Gamma_{22}}\frac{\operatorname{\exp}(t^\prime C_\eta\ell\beta_{\gamma,n}|u_k-\overset{\sim}{u}_k|^{\frac{n}{n-2}})}{|x|^\gamma}\, \mathrm{d}x \\
    & \leq C_1 \int_{\mathbb{R}^n}\frac{\Phi_{\ell^\prime \beta_{\gamma,n},j_0}(\overset{\sim}{u}_k)}{|x|^\gamma}\, \mathrm{d}x+C_2\int_{\mathbb{R}^n}\frac{\Phi_{t^\prime C_\eta\ell\beta_{\gamma,n},j_0}(u_k-\overset{\sim}{u}_k)}{|x|^\gamma}\, \mathrm{d}x,
\end{aligned}
\end{equation}
where $C_1>0$ and $C_2>0$ are some suitable constants independent on the choice of $k$. Let $k_0\in\mathbb{N}$ be sufficiently large enough such that $t^\prime C_\eta\ell\beta_{\gamma,n}\big(\frac{2}{k}\big)^{\frac{n}{n-2}}< \beta_{\gamma,n}$ for all $k\geq k_0$. Then arguing as before, it is not difficult to prove that
\begin{align}\label{eq3.14}
    \int_{\mathbb{R}^n}\frac{\Phi_{t^\prime C_\eta\ell\beta_{\gamma,n},j_0}(u_k-\overset{\sim}{u}_k)}{|x|^\gamma}\, \mathrm{d}x\leq C_3,~\forall~k\in\mathbb{N}, 
\end{align}
for some suitable constant $C_3>0$ independent of $k$. Thus, by using \eqref{eq3.13} and \eqref{eq3.14}, we infer that 
\begin{align}\label{eq3.15}
 \sup_{k\in\mathbb{N}} J_{2,\Gamma_{22}} \leq C_1\sup_{k\in\mathbb{N}} \int_{\mathbb{R}^n}\frac{\Phi_{\ell^\prime \beta_{\gamma,n},j_0}(\overset{\sim}{u}_k)}{|x|^\gamma}\,\mathrm{d}x+C_4,
\end{align}
where $C_4>0$ is some suitable constant independent of the choice of $k$. Hence,
by using \eqref{eq3.8}, \eqref{eq3.9}, \eqref{eq3.12} and \eqref{eq3.15}, we conclude the proof.  
\end{proof}

Now, we are in a position to prove the concentration-compactness principle as follows.
\begin{proof}[\bf{Proof of Theorem \ref{thm1.2}}.]
To prove Theorem \ref{thm1.2}, first one can see that $\Phi_{\alpha,j_0}(|s|)=\Phi_{\alpha,j_0}(s)$ and thus, without loss of generality, we assume that $\{u_k\}_k\subset E$ and its weak limit $u$ are nonnegative functions. Moreover, it follows from the weak lower semicontinuity of the norm in $E$ that
$$\|u\|\leq\liminf_{k\to\infty}\|u_k\|=1. $$
Thus, the proof of Theorem \ref{thm1.2} is based on two cases as follows:

\textbf{Case-1:} In this case, we consider the situation $\|u\|=1$. Recall that $E$ is a reflexive Banach space, $u_k\rightharpoonup u$ in $E$ and $\|u_k\|\to \|u\|$ in $\mathbb{R}$ as $k\to\infty$. Hence, one can deduce from \cite[Corollary A.2]{Autuori-Pucci-2013} that $u_k\to u$ in $E$ as $k\to\infty$. It follows from Lemma \ref{lem2.8} that up to a subsequence of $\{u_k\}_k\subset E$ still denoted by same symbol, there exists some $w\in E$ such that $|u_k(x)|\leq w(x)$ for a.e. $\mathbb{R}^n$ for all $k\in\mathbb{N}$. Thus, we obtain from Corollary \ref{cor3.1} and the Lebesgue dominated convergence theorem that
\begin{align}\label{eq3.898}
\lim_{k\to\infty} \int_{\mathbb{R}^n}\frac{\Phi_{\ell\beta_{\gamma,n},j_0}(u_k)}{|x|^\gamma}\,\mathrm{d}x=\int_{\mathbb{R}^n}\frac{\Phi_{\ell\beta_{\gamma,n},j_0}(u)}{|x|^\gamma}\,\mathrm{d}x<+\infty.   
\end{align}
Now, by using Corollary \ref{cor3.1} and \eqref{eq3.898}, we conclude that \eqref{eq1.13} holds.

\textbf{Case-2:} In this case, we consider the situation $0<\|u\|<1$. The proof of the first part of Theorem \ref{thm1.2} is based on the method of contradiction. Due to Lemma \ref{lem3.2}, it suffices to prove Theorem \ref{thm1.2} for compactly supported smooth functions. Indeed, let $\{u_k\}_k\subset C^\infty_0(\mathbb{R}^n)$ be such that for any $\ell\in(0,L_n(u))$, we have
\begin{align}\label{eq3.17}
\|u_k\|=1,\quad u_k\rightharpoonup u\neq 0\quad\text{in}\quad E\quad\text{as}\quad k\to\infty\quad\text{and}\quad\lim_{k\to\infty}\int_{\mathbb{R}^n}\frac{\Phi_{\ell\beta_{\gamma,n},j_0}(u_k)}{|x|^\gamma}\,\ \mathrm{d}x=+\infty .
\end{align}
Let $u_k^\ast$ be the Schwarz symmetrization of $u_k$. Notice that $(1/|x|^\gamma)^\ast=1/|x|^\gamma$ and therefore, by applying Hardy-Littlewood inequality, for instance, see Lemma \ref{lem2.1} and  \eqref{eq2.1}, we have
\begin{align}\label{eq3.18}
  \int_{\mathbb{R}^n}\frac{\Phi_{\ell\beta_{\gamma,n},j_0}(u_k)}{|x|^\gamma}\,\ \mathrm{d}x\leq \int_{\mathbb{R}^n}\frac{\Phi_{\ell\beta_{\gamma,n},j_0}(u^\ast_k)}{|x|^\gamma}\, \mathrm{d}x.  
\end{align}
From \eqref{eq3.17} and \eqref{eq3.18}, one has
\begin{align}\label{eq3.19}
 \int_{\mathbb{R}^n}\frac{\Phi_{\ell\beta_{\gamma,n},j_0}(u^\ast_k)}{|x|^\gamma}\, \mathrm{d}x\nearrow +\infty \quad\text{as}\quad k\to\infty.    
\end{align}
Let $R>0$ be fixed and join us in writing
$$\int_{\mathbb{R}^n}\frac{\Phi_{\ell\beta_{\gamma,n},j_0}(u^\ast_k)}{|x|^\gamma}\, \mathrm{d}x=\int_{B_R}\frac{\Phi_{\ell\beta_{\gamma,n},j_0}(u^\ast_k)}{|x|^\gamma}\, \mathrm{d}x+\int_{B_R^c}\frac{\Phi_{\ell\beta_{\gamma,n},j_0}(u^\ast_k)}{|x|^\gamma}\, \mathrm{d}x,~\forall~ k\in\mathbb{N}. $$
It follows directly that
\begin{align*}
 \int_{B_R^c}\frac{\Phi_{\ell\beta_{\gamma,n},j_0}(u^\ast_k)}{|x|^\gamma}\, \mathrm{d}x\leq \frac{1}{R^\gamma} \int_{B_R^c}\Phi_{\ell\beta_{\gamma,n},j_0}(u^\ast_k)\,\ \mathrm{d}x.  
\end{align*}
Due to the monotone convergence theorem together with Lemma \ref{lem2.5} and \eqref{eq3.17} yields
\begin{align*}
  \int_{B_R^c}\Phi_{\ell\beta_{\gamma,n},j_0}(u^\ast_k)\, \mathrm{d}x &= \int_{B_R^c}\sum_{j=j_0}^{\infty} \frac{(\ell\beta_{\gamma,n})^j}{j!}|u^\ast_k|^{\frac{nj}{n-2}}\, \mathrm{d}x=\sum_{j=j_0}^{\infty}\int_{B_R^c} \frac{(\ell\beta_{\gamma,n})^j}{j!}|u^\ast_k|^{\frac{nj}{n-2}}\, \mathrm{d}x\\ 
  &\leq \frac{(\ell\beta_{\gamma,n})^{j_0}}{j_0!}\|u_k\|^{\frac{nj_0}{n-2}}_{\frac{nj_0}{n-2}}+\sum_{j=j_0+1}^{\infty}\frac{(\ell\beta_{\gamma,n})^j}{j!}\int_{B_R^c}|u^\ast_k|^{\frac{nj}{n-2}}\, \mathrm{d}x\\
  &\leq \frac{(\ell\beta_{\gamma,n})^{j_0}} {j_0!}c_1+\sum_{j=j_0+1}^{\infty}\frac{(\ell\beta_{\gamma,n})^j}{j!}\int_{B_R^c}|u^\ast_k|^{\frac{nj}{n-2}}\, \mathrm{d}x,
\end{align*}
where $c_1>0$ is a suitable constant independent of the choice of $k$. Now, by using Lemma \ref{lem2.4}, Lemma \ref{lem2.5} and \eqref{eq3.17}, one sees that
$$ |u_k^\ast(x)|\leq \frac{1}{ |x|^{\frac{n}{p^\ast}}}\bigg(\frac{n}{\omega_{n-1}}\bigg)^{\frac{1}{p^\ast}}\|u_k\|_{p^\ast}\leq \bigg(\frac{nc_2^{p^\ast}}{\omega_{n-1}}\bigg)^{\frac{1}{p^\ast}}\frac{1}{ |x|^{\frac{n}{p^\ast}}},~\forall~~x\neq 0,$$
where $c_2>0$ is again a suitable constant independent on the choice of $k$. The above inequality with the fact that $\frac{n^2j}{(n-2)p^\ast}>n$ for all $j\geq j_0+1$ implies that
\begin{align*}
 \int_{B_R^c}|u^\ast_k|^{\frac{nj}{n-2}}\, \mathrm{d}x &\leq \omega_{n-1}\bigg(\frac{nc_2^{p^\ast}}{\omega_{n-1}}\bigg)^{\frac{nj}{(n-2)p^\ast}}\int_{R}^{\infty} r^{n-\frac{n}{p^\ast}\frac{nj}{n-2}-1}\, \mathrm{d}r\\
 & \leq \frac{\omega_{n-1}(n-2)p^\ast R^n}{n\big(n(j_0+1)-(n-2)p^\ast\big)}\bigg(\frac{nc_2^{p^\ast}}{\omega_{n-1}R^n}\bigg)^{\frac{nj}{(n-2)p^\ast}},~\forall~ j\geq j_0+1.
\end{align*}
By combining all the above information, one has 
\begin{align*}
    \int_{B_R^c}\Phi_{\ell\beta_{\gamma,n},j_0}(u^\ast_k)\, \mathrm{d}x &\leq \frac{(\ell\beta_{\gamma,n})^{j_0}} {j_0!}c_1+\frac{\omega_{n-1}(n-2)p^\ast R^n}{n\big(n(j_0+1)-(n-2)p^\ast\big)}\operatorname{\exp}\Bigg[\ell\beta_{\gamma,n}\bigg(\frac{nc_2^{p^\ast}}{\omega_{n-1}R^n}\bigg)^{\frac{n}{(n-2)p^\ast}}\Bigg]\\
    &:=C(\ell,\gamma,R,p,n)<+\infty,
\end{align*}
where $C(\ell,\gamma,R,p,n)>0$ is a constant depending only on $\ell,\gamma,R,p$ and $n$. In conclusion, we infer that
\begin{equation}\label{eq3.20} \sup_{k\in\mathbb{N}}\int_{B_R^c}\frac{\Phi_{\ell\beta_{\gamma,n},j_0}(u^\ast_k)}{|x|^\gamma}\, \mathrm{d}x\leq C(\ell,\gamma,R,p,n)<+\infty. 
\end{equation}
In addition, a direct consequence of \eqref{eq3.19} and \eqref{eq3.20} implies that
\begin{equation}\label{eq3.21}
\lim_{k\to\infty} \int_{B_R}\frac{\operatorname{\exp}(\ell\beta_{\gamma,n}|u_k^\ast(x)|^{\frac{n}{n-2}})}{|x|^\gamma}\, \mathrm{d}x = +\infty.   
\end{equation}
Further, assume that $g_k,g\in C^\infty_0(\mathbb{R}^n)$ for all $k\in\mathbb{N}$ and consider the following problems 
$$-\Delta u_k=g_k\quad\text{in}\quad \mathbb{R}^n\quad\text{and}\quad -\Delta u=g\quad\text{in}\quad \mathbb{R}^n.$$
It is obvious that $g_k,g\in L^t(\mathbb{R}^n)$ for all $k\in\mathbb{N}$ and for any $t\in\{p,\frac{n}{2}\}$. Define $g_k^\ast(x)=g_k^\sharp(\sigma_n |x|^n)$ for all $x\in\mathbb{R}^n$ and $k\in\mathbb{N}$. Hence, we obtain from \eqref{eq3.17} that $\|g_k\|_t\leq 1$ for all $k\in\mathbb{N}$ and $g_k\rightharpoonup g$ in $L^t(\mathbb{R}^n)$ as $k\to\infty$ for any $t\in\{p,\frac{n}{2}\}$. Now, by using 
Lemma \ref{lem2.2}, there exists a subsequence of $\{g_k\}_k$ still not relabeled and $h\in L^t([0,+\infty))$ such that for any $t\in\{p,\frac{n}{2}\}$, we have
\begin{align}\label{eq3.22}
    g_k^\sharp\to h~\text{a.e. in}~[0,+\infty)~~\text{as}~~ k\to\infty~~\text{and}~~\|h\|_{L^t([0,+\infty))}\geq \|g^\sharp\|_{L^t([0,+\infty))}=\|g\|_t.\end{align}
It follows from Lemma \ref{lem2.3}, \eqref{eq2.22}, and integration by parts formula that, for all $0<s_1<s_2<+\infty$, for all $k\in\mathbb{N}$, one has
\begin{align*}
  \quad\quad u_k^\sharp(s_1)-u_k^\sharp(s_2) &\leq \frac{1}{n^2\sigma_n^\frac{2}{n}}\int_{s_1}^{s_2}\frac{g_k^{\sharp\sharp}(t)}{t^{1-\frac{2}{n}}}\, \mathrm{d}t+D(n)\|g_k\|_{\frac{n}{2}} \\ & \leq \frac{1}{n^2\sigma_n^\frac{2}{n}}\int_{s_1}^{s_2}\bigg(\int_{0}^{t}g_k^{\sharp}(\eta)\,\ \mathrm{d}\eta\bigg)t^{\frac{2}{n}-2}\, \mathrm{d}t+D(n)\\
  &=\frac{1}{\beta(n,2)^{1-\frac{2}{n}}}\Bigg[\frac{1}{s_1^{1-\frac{2}{n}}}\int_{0}^{s_1}g_k^{\sharp}(t)\,\mathrm{d}t-\frac{1}{s_2^{1-\frac{2}{n}}}\int_{0}^{s_2}g_k^{\sharp}(t)\, \mathrm{d}t+\int_{s_1}^{s_2}\frac{g_k^{\sharp}(t)}{t^{1-\frac{2}{n}}}\,\mathrm{d}t\Bigg] +D(n).  
\end{align*}
Due to the H\"older's inequality and \eqref{eq3.22}, we obtain for all $s\in(0,+\infty)$ that
\begin{align*}
  \frac{1}{s^{1-\frac{2}{n}}}\int_{0}^{s}g_k^{\sharp}(t)\, \mathrm{d}t\leq \|g_k^{\sharp}\|^{\frac{n}{2}}_{L^{\frac{n}{2}}([0,+\infty))}=\|g_k\|^{\frac{n}{2}}_{\frac{n}{2}}\leq 1,~\forall~ k\in\mathbb{N}.
\end{align*}
Moreover, we deduce from the above inequalities that there exists a suitable constant $\overset{\sim}{D}(n)>0$ which depends on $n$ but is independent of the choice of $k$ and there holds
\begin{align}\label{eq3.23}
  u_k^\sharp(s_1)-u_k^\sharp(s_2)\leq  \frac{1}{\beta(n,2)^{1-\frac{2}{n}}} \int_{s_1}^{s_2}\frac{g_k^{\sharp}(t)}{t^{1-\frac{2}{n}}}\,\ \mathrm{d}t+\overset{\sim}{D}(n),~\forall~ k\in\mathbb{N}\quad\text{and}\quad 0<s_1<s_2<+\infty.
\end{align}
Define the function $w_k$ on $(0,|B_R|]$ by
\begin{align*}
    w_k(s):=\frac{1}{\beta(n,2)^{1-\frac{2}{n}}} \int_{s}^{|B_R|}\frac{g_k^{\sharp}(t)}{t^{1-\frac{2}{n}}}\,\ \mathrm{d}t,~\forall~s\in (0,|B_R|].
\end{align*}
Notice that the map $\cdot\mapsto w_k(\cdot)$ is decreasing on $(0,|B_R|]$ and $w_k(|B_R|)=0$. It follows from \eqref{eq3.17}, \eqref{eq3.23}, Lemma \ref{lem2.4} and Lemma \ref{lem2.5} that
\begin{align*}
    u_k^\sharp(s)&\leq  w_k(s)+u_k^\sharp(|B_R|)+\overset{\sim}{D}(n)\leq  w_k(s)+\overset{\sim}{D}(p,n,R),~\forall~ k\in\mathbb{N}\quad\text{and}\quad s\in(0,|B_R|), 
\end{align*}
where $\overset{\sim}{D}(p,n,R)>0$ is a constant depending only on $p$, $n$ and $R$. Choose $\eta>0$ sufficiently small enough such that $\bar{\ell}=(1+\eta)\ell<L_n(u)$, then from the above inequality together with $\eqref{eq3.6}$ yields
\begin{align*}
  | u_k^\sharp(s)|^{\frac{n}{n-2}}\leq(1+\eta)| w_k(s)|^{\frac{n}{n-2}}+C_\eta \overset{\sim}{D}(p,n,R)^{\frac{n}{n-2}},~\forall~ k\in\mathbb{N}\quad\text{and}\quad s\in(0,|B_R|).
\end{align*}
This together with the properties of the Schwarz symmetrization ensures that
\begin{align*}
    \int_{B_R}\frac{\operatorname{\exp}(\ell\beta_{\gamma,n}|u_k^\ast(x)|^{\frac{n}{n-2}})}{|x|^\gamma}\,& \mathrm{d}x =\int_{0}^{|B_R|}\Big(\frac{\sigma_n}{s}\Big)^{\frac{\gamma}{n}}\operatorname{\exp}(\ell\beta_{\gamma,n}|u_k^\sharp(s)|^{\frac{n}{n-2}})\, \mathrm{d}s\\
    &\leq \operatorname{exp}(\ell\beta_{\gamma,n}C_\eta \overset{\sim}{D}(p,n,R)^{\frac{n}{n-2}})\int_{0}^{|B_R|}\Big(\frac{\sigma_n}{s}\Big)^{\frac{\gamma}{n}}\operatorname{\exp}(\bar{\ell}\beta_{\gamma,n}|w_k(s)|^{\frac{n}{n-2}})\,\mathrm{d}s.
\end{align*}
It follows from the above inequality and \eqref{eq3.21} that
\begin{align}\label{eq3.94}
    \lim_{k\to\infty}\int_{0}^{|B_R|}\Big(\frac{\sigma_n}{s}\Big)^{\frac{\gamma}{n}}\operatorname{\exp}(\bar{\ell}\beta_{\gamma,n}|w_k(s)|^{\frac{n}{n-2}})\,\mathrm{d}s=+\infty.
\end{align}
In virtue of the Hölder's inequality and \eqref{eq3.22}, for all $s\in(0,|B_R|)$, we have
\begin{equation}\label{eq3.95}
\begin{aligned}
    w_k(s) & \leq \bigg(\int_{0}^{|B_R|}|g_k^\sharp(t)|^{\frac{n}{2}}\, \mathrm{d}t\bigg)^{\frac{2}{n}} \bigg(\int_{s}^{|B_R|}\frac{1}{\beta(n,2)t}\, \mathrm{d}t\bigg)^{1-\frac{2}{n}} \\
    &\leq \|g^\ast_k\|_{\frac{n}{2}}\bigg(\frac{1}{\beta(n,2)}\ln\bigg(\frac{|B_R|}{s}\bigg)\bigg)^{1-\frac{2}{n}} \leq\bigg(\frac{1}{\beta(n,2)}\ln\bigg(\frac{|B_R|}{s}\bigg)\bigg)^{1-\frac{2}{n}},~\forall~ k\in\mathbb{N}.
\end{aligned}
\end{equation}
Next, we claim that for any $\ell^\prime\in(\bar{\ell},L_n(u)),~k_0\in\mathbb{N}$ and $s_0\in(0,|B_R|)$, there exist $k\in\mathbb{N}$ and $s\in(0,s_0)$ satisfying
\begin{align*}
   w_k(s)\geq \bigg(\frac{1}{\ell^\prime\beta(n,2)}\ln\bigg(\frac{|B_R|}{s}\bigg)\bigg)^{1-\frac{2}{n}},~\forall~k>k_0.  
\end{align*}
Indeed, if not, let there exists $k_0\in\mathbb{N}$ and $s_0\in(0,|B_R|)$ be such that for some $\ell^\prime\in(\bar{\ell},L_n(u))$, we have
\begin{align}\label{eq3.25}
  w_k(s)< \bigg(\frac{1}{\ell^\prime\beta(n,2)}\ln\bigg(\frac{|B_R|}{s}\bigg)\bigg)^{1-\frac{2}{n}},~\forall~ k\geq k_0\quad\text{and}\quad s\in(0,s_0).    
\end{align}
By using \eqref{eq3.95} and \eqref{eq3.25}, for sufficiently large $k$, we have
\begin{align*}
\int_{0}^{|B_R|}\Big(\frac{\sigma_n}{s}\Big)^{\frac{\gamma}{n}}&\operatorname{\exp}(\bar{\ell}\beta_{\gamma,n}|w_k(s)|^{\frac{n}{n-2}})\,\mathrm{d}s \\
&=\bigg(\int_{0}^{s_0}+\int_{s_0}^{|B_R|}\bigg)\Big(\frac{\sigma_n}{s}\Big)^{\frac{\gamma}{n}}\operatorname{\exp}(\bar{\ell}\beta_{\gamma,n}|w_k(s)|^{\frac{n}{n-2}})\,\mathrm{d}s \\
    & \leq \int_{0}^{s_0} \Big(\frac{\sigma_n}{s}\Big)^{\frac{\gamma}{n}}\bigg(\frac{|B_R|}{s}\bigg)^{(1-\frac{\gamma}{n})\frac{\bar{\ell}}{\ell^\prime}}\, \mathrm{d}s + \int_{s_0}^{|B_R|}\Big(\frac{\sigma_n}{s}\Big)^{\frac{\gamma}{n}}\bigg(\frac{|B_R|}{s}\bigg)^{(1-\frac{\gamma}{n})\bar{\ell}}\, \mathrm{d}s<+\infty.    
\end{align*}
In view of the above inequality and \eqref{eq3.94}, we arrive at a contradiction. This completes the proof of our claim. Thus, there exists a subsequence $\{s_k\}_k\subset(0,s_0)$ with $s_k\leq\frac{1}{k}$ for all $k\in\mathbb{N}$ such that we can assume
\begin{align}\label{eq3.26}
   w_k(s_k)\geq \bigg(\frac{1}{\ell^\prime\beta(n,2)}\ln\bigg(\frac{|B_R|}{s_k}\bigg)\bigg)^{1-\frac{2}{n}}.  
\end{align}
For $L>0$ and $v\geq 0$, we define 
$$ H^L(v):=\min\{v,L\} ~\quad\text{and}\quad H_L(v):=v-H^L(v).$$
In view of \eqref{eq3.26}, one can easily see that $w_k(s_k)\nearrow +\infty$ as $k\to\infty$ and also we have $w_k(|B_R|)=0$. This shows that we can find a subsequence of $\{s_k\}_k$ still not relabeled and the existence of $\xi_k\in (s_k,|B_R|)$ satisfying $w_k(\xi_k)=L$ for each $k\in \mathbb{N}$. In addition, since the map $\cdot\mapsto g_k^\sharp(\cdot)$ is always nonnegative and nonincreasing, therefore by using the
H\"older's inequality, one has
\begin{align*}
   \frac{1}{\beta(n,2)^{1-\frac{2}{n}}} \int_{s_k}^{\xi_k}\frac{g_k^{\sharp}(t)}{t^{1-\frac{2}{n}}}\, \mathrm{d}t\leq  \frac{n g_k^{\sharp}(s_k)}{2\beta(n,2)^{1-\frac{2}{n}}}\Big(|B_R|^{\frac{2}{n}}-s_k^{\frac{2}{n}}\Big). 
\end{align*}
The above inequality together with \eqref{eq3.26} implies that
\begin{align}\label{eq3.27}
 \bigg(\frac{1}{\ell^\prime\beta(n,2)}\ln\bigg(\frac{|B_R|}{s_k}\bigg)\bigg)^{1-\frac{2}{n}}-L\leq  w_k(s_k)- w_k(\xi_k)\leq  \frac{n g_k^{\sharp}(s_k)}{2\beta(n,2)^{1-\frac{2}{n}}}\Big(|B_R|^{\frac{2}{n}}-s_k^{\frac{2}{n}}\Big),~\forall~k\in\mathbb{N}. 
\end{align}
Now, from \eqref{eq3.27}, we infer that $g_k^\sharp(s_k)\nearrow +\infty$ as $k\to\infty$ and thus, there exists $k_0\in\mathbb{N}$ such that $g_k^\sharp(s_k)>L$ for all $k\geq k_0$. Consequently, up to a subsequence of $\{s_k\}_k$ still not relabeled, we can find  $\eta_k\in (s_k,|B_R|)$ satisfying $g_k^\sharp(\eta_k)=L$ for each $k\in \mathbb{N}$ and $g_k^\sharp(\eta)< L$ for all $\eta>\eta_k$. For all $k\geq k_0$, we set $\theta_k:=\min\{\eta_k,\xi_k\}$, then one sees from \eqref{eq3.26} that
\begin{equation}\label{eq3.28}
\begin{aligned}
 \bigg(\frac{1}{\ell^\prime\beta(n,2)}&\ln\bigg(\frac{|B_R|}{s_k}\bigg)\bigg)^{1-\frac{2}{n}}-L \\ &\leq  w_k(s_k)- w_k(\xi_k)=  \frac{1}{\beta(n,2)^{1-\frac{2}{n}}} \int_{s_k}^{\xi_k}\frac{g_k^{\sharp}(t)}{t^{1-\frac{2}{n}}}\, \mathrm{d}t \\
 &=\frac{1}{\beta(n,2)^{1-\frac{2}{n}}}\Bigg[\int_{s_k}^{\theta_k}\frac{g_k^{\sharp}(t)-L}{t^{1-\frac{2}{n}}}\, \mathrm{d}t+\int_{\theta_k}^{\xi_k}\frac{g_k^{\sharp}(t)-L}{t^{1-\frac{2}{n}}}\, \mathrm{d}t  +L \int_{s_k}^{\xi_k}\frac{1}{t^{1-\frac{2}{n}}}\, \mathrm{d}t\Bigg] \\
 & \leq \frac{1}{\beta(n,2)^{1-\frac{2}{n}}}\Bigg[\int_{s_k}^{\theta_k}\frac{g_k^{\sharp}(t)-L}{t^{1-\frac{2}{n}}}\, \mathrm{d}t+\frac{nL}{2} \Big(|B_R|^{\frac{2}{n}}-s_k^{\frac{2}{n}}\Big)\Bigg].
\end{aligned}
\end{equation}
Moreover, the application of the H\"older's inequality yields
\begin{equation}\label{eq3.29}
\begin{aligned}
   \frac{1}{\beta(n,2)^{1-\frac{2}{n}}}\int_{s_k}^{\theta_k}\frac{g_k^{\sharp}(t)-L}{t^{1-\frac{2}{n}}}\, \mathrm{d}t
   &\leq \bigg(\int_{s_k}^{\theta_k}|g_k^{\sharp}(t)-L|^{\frac{n}{2}} \, \mathrm{d}t\bigg)^{\frac{2}{n}} \bigg(\int_{s_k}^{\theta_k}\frac{1}{\beta(n,2)t}\, \mathrm{d}t\bigg)^{1-\frac{2}{n}}\\ 
   &\leq \bigg(\int_{0}^{\eta_k}|g_k^{\sharp}(t)-L|^{\frac{n}{2}} \, \mathrm{d}t\bigg)^{\frac{2}{n}} \bigg(\int_{s_k}^{|B_R|}\frac{1}{\beta(n,2)t}\, \mathrm{d}t\bigg)^{1-\frac{2}{n}} \\
   &\leq \bigg(\int_{0}^{|B_R|}|H_L(g_k^\sharp(t))|^{\frac{n}{2}}\, \mathrm{d}t\bigg)^{\frac{2}{n}} \bigg(\frac{1}{\beta(n,2)}\ln\bigg(\frac{|B_R|}{s_k}\bigg)\bigg)^{1-\frac{2}{n}}\\ 
   &\leq \|H_L(g^\sharp_k)\|_{L^{\frac{n}{2}}([0,+\infty))}\bigg(\frac{1}{\beta(n,2)}\ln\bigg(\frac{|B_R|}{s_k}\bigg)\bigg)^{1-\frac{2}{n}}.
\end{aligned}
\end{equation}
If possible, let us choose $\overset{\sim}{\ell}\in(\ell^\prime,L_n(u))$. It is obvious that $\ln\Big(\frac{|B_R|}{s_k}\Big)\to +\infty$ as $k\to\infty$. Hence, we obtain from \eqref{eq3.28} and \eqref{eq3.29} as $k\to\infty$ that
\begin{equation}\label{eq3.30}
\begin{aligned}
    \Bigg(\frac{1}{\overset{\sim}{\ell}}\Bigg)^{\frac{n-2}{2}} &\notag\leq \|H_L(g^\sharp_k)\|^{\frac{n}{2}}_{L^{\frac{n}{2}}([0,+\infty))}+o_k(1)\\
    &\leq \|H_L(g^\sharp_k)\|^{\frac{n}{2}}_{L^{\frac{n}{2}}([0,+\infty))}+\|H_L(g^\sharp_k)\|^{\frac{n}{2}}_{L^{p}([0,+\infty))}+o_k(1).
\end{aligned}
\end{equation}
It may be deduced from direct calculations that
\begin{equation}\label{eq3.31}
\begin{aligned}
    1 &=\|u_k\|^\frac{n}{2}=\|g_k\|^\frac{n}{2}_\frac{n}{2}+\|g_k\|^\frac{n}{2}_p \\ 
    &=\|g^\ast_k\|^\frac{n}{2}_\frac{n}{2}+\|g^\ast_k\|^\frac{n}{2}_p =\|g^\sharp_k\|^\frac{n}{2}_{L^\frac{n}{2}([0,+\infty))}+ \|g^\sharp_k\|^\frac{n}{2}_{L^p([0,+\infty))} \\ 
    & \geq \|H_L(g^\sharp_k)\|^\frac{n}{2}_{L^\frac{n}{2}([0,+\infty))}+ \|H_L(g^\sharp_k)\|^\frac{n}{2}_{L^p([0,+\infty))} \\ 
    &\quad + \|g^\sharp_k-H_L(g^\sharp_k)\|^\frac{n}{2}_{L^\frac{n}{2}([0,+\infty))}+ \|g^\sharp_k-H_L(g^\sharp_k)\|^\frac{n}{2}_{L^p([0,+\infty))}.
\end{aligned}
\end{equation}
In view of \eqref{eq3.30} and \eqref{eq3.31}, we get as $k\to\infty$ that
\begin{align}\label{eq3.32}
    1-\Bigg(\frac{1}{\overset{\sim}{\ell}}\Bigg)^{\frac{n-2}{2}}\geq \|g^\sharp_k-H_L(g^\sharp_k)\|^\frac{n}{2}_{L^\frac{n}{2}([0,+\infty))}+ \|g^\sharp_k-H_L(g^\sharp_k)\|^\frac{n}{2}_{L^p([0,+\infty))}+o_k(1).
\end{align}
By the definition of $H_L$ and \eqref{eq3.22}, for a.a. $s\in[0,+\infty)$, one has
$$ h(s)-H_L(h(s))\to h(s)\quad\text{as}\quad L\to\infty$$ and $$ g^\sharp_k(s)-H_L(g^\sharp_k(s))\to  h(s)-H_L(h(s)) \quad\text{as}\quad k\to\infty. $$
Letting $k\to\infty$ in \eqref{eq3.32} and applying Fatou's lemma, we have
\begin{align*}
     1-\Bigg(\frac{1}{\overset{\sim}{\ell}}\Bigg)^{\frac{n-2}{2}}\geq \|h-H_L(h)\|^\frac{n}{2}_{L^\frac{n}{2}([0,+\infty))}+ \|h-H_L(h)\|^\frac{n}{2}_{L^p([0,+\infty))}.
\end{align*}
Despite this, again by Fatou's lemma, we obtain from the above inequality as $L\to\infty$ that
\begin{align*}
     1-\Bigg(\frac{1}{\overset{\sim}{\ell}}\Bigg)^{\frac{n-2}{2}} &\geq \|h\|^\frac{n}{2}_{L^\frac{n}{2}([0,+\infty))}+ \|h\|^\frac{n}{2}_{L^p([0,+\infty))}\geq \|g^\sharp\|^\frac{n}{2}_{L^\frac{n}{2}([0,+\infty))}+ \|g^\sharp\|^\frac{n}{2}_{L^p([0,+\infty))} \\ \notag
&=\|g^\ast\|^\frac{n}{2}_\frac{n}{2}+\|g^\ast\|^\frac{n}{2}_p  =\|g\|^\frac{n}{2}_\frac{n}{2}+\|g\|^\frac{n}{2}_p 
=\|u\|^\frac{n}{2}=1-\bigg(\frac{1}{L_n(u)}\bigg)^{\frac{n-2}{2}},
\end{align*}
which contradicts $\overset{\sim}{\ell}\in(\ell^\prime,L_n(u))$ and thus, we complete the proof of the first part of Theorem \ref{thm1.2}. To complete the proof, we only have to prove the sharpness of $L_n(u)$. For this, we shall have to contruct a sequence $\{u_k\}_k\subset E$ and a function $u\in E$ satisfying
\begin{align}\label{eq3.33}
    \|u_k\|=1,\quad u_k\rightharpoonup u\quad\text{in}\quad E\quad\text{as}\quad k\to\infty,\quad\|u\|=\delta<1
\end{align}
but 
\begin{align*}
\int_{\mathbb{R}^n}\frac{\Phi_{\ell\beta_{\gamma,n},j_0}(u_k)}{|x|^\gamma}\,\mathrm{d}x\nearrow +\infty\quad\text{as}\quad k\to\infty,~\forall~ \ell> L_n(u).
\end{align*}
Let $\{\xi_k\}_k\subset E$ be a sequence of radial functions defined as in \eqref{eq3.4}. It follows from the standard arguments that
$$ \xi_k\rightharpoonup 0 \quad\text{in}\quad E,\quad \|\Delta \xi_k\|_{\frac{n}{2}}^{\frac{n}{2}}\to 1 \quad \text{and}\quad \|\Delta \xi_k\|^p_p\to 0 \quad\text{as}\quad k\to\infty.$$
Take a radial function $u\in C^\infty_0(\mathbb{R}^n)$ such that $\|u\|:=\delta<1$ and $\text{supp}(u)\subset\{x\in \mathbb{R}^n:~2<|x|<3\}$. Define $v_k:=u+(1-\delta^{\frac{n}{2}})^{\frac{2}{n}}\xi_k$ for all $k\in\mathbb{N}$. Notice that $\Delta u$ and $\Delta \xi_k$ have disjoint supports. It follows at once that
$$\|\Delta v_k\|_{\frac{n}{2}}^{\frac{n}{2}}=\|\Delta u\|_{\frac{n}{2}}^{\frac{n}{2}}+ (1-\delta^{\frac{n}{2}})+\zeta_k\quad\text{and}\quad \|\Delta v_k\|^p_p=\|\Delta u\|^p_p+\zeta_k,$$
where $\zeta_k\to 0$ as $k\to\infty$. Hence, it is easy to see that $\|v_k\|=1+\zeta_k$. Denote $u_k:=\frac{v_k}{1+\zeta_k}$. This implies the validity of \eqref{eq3.33}. Moreover, for any $\ell>L_n(u)$, there exists $\varepsilon>0$ such that one can assume $\ell=(1+\varepsilon)L_n(u)$ and thus, we have 
\begin{align*}
   \int_{\mathbb{R}^n}&\frac{\Phi_{\ell\beta_{\gamma,n},j_0}(u_k)}{|x|^\gamma} \,\mathrm{d}x\geq \int_{\{x\in\mathbb{R}^n\colon~|x|< k^{-\frac{1}{n}} \}}\frac{\Phi_{\ell \beta_{\gamma,n},j_0}\big((1+\zeta_k)^{-1}(1-\delta^{\frac{n}{2}})^{\frac{2}{n}}\xi_k\big)}{|x|^\gamma}\,\mathrm{d}x \\
    &=\omega_{n-1} \int_{0}^{k^{-\frac{1}{n}}}\frac{\Phi_{\ell \beta_{\gamma,n},j_0}\bigg((1+\zeta_k)^{-1}(1-\delta^{\frac{n}{2}})^{\frac{2}{n}}\bigg(\Big(\frac{\ln k}{\beta(n,2)}\Big)^{1-\frac{2}{n}}+(1-k^{\frac{2}{n}}r^2)\frac{n \beta(n,2)^{\frac{2}{n}-1}}{2(\ln k)^{\frac{2}{n}}}\bigg)\bigg)}{r^{\gamma-n+1}}\,\mathrm{d}r\\
    &\geq \frac{\omega_{n-1}}{(n-\gamma)k^{\frac{n-\gamma}{n}}}~\Phi_{\ell \beta_{\gamma,n},j_0}\Bigg((1+\zeta_k)^{-1}(1-\delta^{\frac{n}{2}})^{\frac{2}{n}}\bigg(\frac{\ln k}{\beta(n,2)}\bigg)^{1-\frac{2}{n}}\Bigg) \\
    &\sim \frac{\omega_{n-1}}{(n-\gamma)k^{\frac{n-\gamma}{n}}}~\operatorname{\exp}\Bigg(\ell \beta_{\gamma,n} \Bigg|(1+\zeta_k)^{-1}(1-\delta^{\frac{n}{2}})^{\frac{2}{n}}\bigg(\frac{\ln k}{\beta(n,2)}\bigg)^{1-\frac{2}{n}}\Bigg|^{\frac{n}{n-2}}\Bigg)\\
    & =\frac{\omega_{n-1}}{(n-\gamma)}~\operatorname{\exp}\Bigg[\bigg(1-\frac{\gamma}{n}\bigg)\ln k\Bigg(\frac{1+\varepsilon}{(1+\zeta_k)^{\frac{n}{n-2}}}-1\Bigg)\Bigg]\to+\infty\quad\text{as}\quad k\to\infty.
\end{align*}
This finishes the proof of Theorem \ref{thm1.2}.
\end{proof}
\section{Asymptotic behavior of subcritical sharp singular Adams' type Inequality: Proof of Theorem \ref{thm5.1}}\label{sec5}
Basically, the purpose of this section is to prove Theorem \ref{thm5.1}. More precisely, we shall demonstrate the asymptotic behavior of the supremum $\text{ATSC}(\ell, \gamma)$ for the subcritical sharp singular Adams' type inequality. To prove this, we establish the following crucial result.
\begin{lemma}\label{lem7.1}
   There holds
   $$\text{ATSC}(\ell,\gamma)=\sup_{u\in E,~\|\Delta u\|_{\frac{n}{2}}\leq 1,~ \|\Delta u\|_p=1}\int_{\mathbb{R}^n}\frac{\Phi_{\ell(1-\frac{\gamma}{n}),j_0}(u)}{|x|^\gamma}\,\mathrm{d}x. $$
\end{lemma}
\begin{proof}
  Let $u\in E$ be such that $\|\Delta u\|_{\frac{n}{2}}\leq 1$. Define the function $w$ as follows $$ w(x)=u(\lambda x)\quad\text{with}\quad \lambda=\|\Delta u\|_p^{\frac{p}{n-2p}}>0. $$
  By direct calculations, one has
  $$\Delta w(x)=\lambda^2 \Delta u(\lambda x),\quad\|\Delta w\|_{\frac{n}{2}}=\|\Delta u\|_{\frac{n}{2}}\quad\text{and}\quad\|\Delta w\|_p=\lambda^{-\frac{n-2p}{p}}\|\Delta u\|_p=1. $$
  It follows that
  \begin{align*}
    \int_{\mathbb{R}^n}\frac{\Phi_{\ell(1-\frac{\gamma}{n}),j_0}(w)}{|x|^\gamma}\,\mathrm{d}x&=\int_{\mathbb{R}^n}\frac{\Phi_{\ell(1-\frac{\gamma}{n}),j_0}(u(\lambda x))}{|x|^\gamma}\,\mathrm{d}x=\frac{1}{\lambda^{n-\gamma}}\int_{\mathbb{R}^n}\frac{\Phi_{\ell(1-\frac{\gamma}{n}),j_0}(u)}{|x|^\gamma}\,\mathrm{d}x\\
    &=\|\Delta u\|_p^{-p^\ast(1-\frac{\gamma}{n})}\int_{\mathbb{R}^n}\frac{\Phi_{\ell(1-\frac{\gamma}{n}),j_0}(u)}{|x|^\gamma}\,\mathrm{d}x.
  \end{align*}
Hence, we infer from the above equality that the proof of Lemma \ref{lem7.1} is completed.
\end{proof}
Due to Lemma \ref{lem7.1}, one can assume $\|\Delta u\|_p=1$ in the subcritical sharp singular Adams' type inequality. Now, we finish this section by  proving Theorem \ref{thm5.1} as follows.

\begin{proof}[\bf{Proof of Theorem \ref{thm5.1}}.]
 By exploiting the denseness of $C_0^\infty(\mathbb{R}^n)$ in $E$, $\Phi_{\alpha,j_0}(s)= \Phi_{\alpha,j_0}(|s|)$ and Lemma \ref{lem7.1}, without loss of generality, one can assume $u\in C_0^\infty(\mathbb{R}^n)\setminus\{0\}$, $u\geq 0$, $\|\Delta u\|_{\frac{n}{2}}\leq 1$ and $\|\Delta u\|_p=1$. Define the set $S$ as follows
 $$S:=\Bigg\{x\in \mathbb{R}^n\colon~u(x)>\Bigg(1-\bigg(\frac{\ell}{\beta(n,2)}\bigg)^{\frac{n-2}{2}}\Bigg)^{\frac{2}{n}}\Bigg\}. $$
Now, using again the denseness of $C_0^\infty(\mathbb{R}^n)$ in $D^{2,p}(\mathbb{R}^n)$ and the embedding $D^{2,p}(\mathbb{R}^n)\hookrightarrow L^{p^\ast}(\mathbb{R}^n)$, one sees that $S$ is a bounded domain. Indeed, we have
\begin{align*}
    |S|=\int_S 1 \,\mathrm{d}x &\leq \|u\|^{p^\ast}_{p^\ast}\Bigg(1-\bigg(\frac{\ell}{\beta(n,2)}\bigg)^{\frac{n-2}{2}}\Bigg)^{-\frac{2p^\ast}{n}}\leq C(p,n)\Bigg(1-\bigg(\frac{\ell}{\beta(n,2)}\bigg)^{\frac{n-2}{2}}\Bigg)^{-\frac{2p^\ast}{n}},
\end{align*}
where $C(p,n)>0$ is a constant depending only on $p$ and $n$. Next, join us in writing
$$ \int_{\mathbb{R}^n}\frac{\Phi_{\ell(1-\frac{\gamma}{n}),j_0}(u)}{|x|^\gamma}\,\mathrm{d}x= \int_{S}\frac{\Phi_{\ell(1-\frac{\gamma}{n}),j_0}(u)}{|x|^\gamma}\,\mathrm{d}x+ \int_{\mathbb{R}^n\setminus S}\frac{\Phi_{\ell(1-\frac{\gamma}{n}),j_0}(u)}{|x|^\gamma}\,\mathrm{d}x=:I_1+I_2. $$
To evaluate $I_2$, we first notice that $\mathbb{R}^n\setminus S\subset\{x\in \mathbb{R}^n\colon~u(x)\leq 1\}$. By using the embedding $D^{2,p}(\mathbb{R}^n)\hookrightarrow L^{p^\ast}(\mathbb{R}^n)$ and the fact that $\|\Delta u\|_p=1$, we get
\begin{equation}\label{eq7.1}
\begin{aligned}
I_2 & \leq \int_{\{x\in \mathbb{R}^n\colon~ u(x)\leq 1\}}\frac{1}{|x|^\gamma}\sum_{j=j_0}^\infty \frac{\big(\ell(1-\frac{\gamma}{n})\big)^j}{j!}|u|^{\frac{nj}{n-2}}\,\mathrm{d}x\leq \int_{\{x\in \mathbb{R}^n\colon~ u(x)\leq 1\}}\frac{1}{|x|^\gamma}\sum_{j=j_0}^\infty \frac{\big(\ell(1-\frac{\gamma}{n})\big)^j}{j!}|u|^{p^\ast}\,\mathrm{d}x \\ 
 & \leq \operatorname{\exp}\bigg(\ell\bigg(1-\frac{\gamma}{n}\bigg)\bigg)\Bigg[ \int_{\{x\in \mathbb{R}^n\colon~ u(x)\leq 1,~|x|<1\}}\frac{|u|^{p^\ast}}{|x|^\gamma}\,\mathrm{d}x+  \int_{\{x\in \mathbb{R}^n\colon~u(x)\leq1,~|x|\geq1\}}\frac{|u|^{p^\ast}}{|x|^\gamma}\,\mathrm{d}x\Bigg]\\ 
& \leq \operatorname{\exp}\bigg(\ell\bigg(1-\frac{\gamma}{n}\bigg)\bigg)\Bigg[ \int_{\{ x\in \mathbb{R}^n\colon~|x|<1\}}\frac{1}{|x|^\gamma}\,\mathrm{d}x+ \int_{\{ x\in \mathbb{R}^n\colon~|x|\geq1\}}|u|^{p^\ast}\,\mathrm{d}x\Bigg] \\
&\leq C(\gamma,p,n),
\end{aligned}
\end{equation}
where $C(\gamma,p,n)>0$ is a constant depending only on  $\gamma$, $p$ and $n$. Finally, to evaluate $I_1$, we define
$$w(x):=u(x)-\bigg(1-\Big(\frac{\ell}{\beta(n,2)}\Big)^{\frac{n-2}{2}}\bigg)^{\frac{2}{n}}\quad\text{in}\quad S. $$
It follows that $w\in W^{2,\frac{n}{2}}_\mathcal{N}(S)$ and $\|\Delta w\|_{\frac{n}{2}}\leq 1$. By employing \eqref{eq3.6} with $\eta=\frac{\beta(n,2)}{\ell}-1$, we obtain
\begin{align*}
 |u|^{\frac{n}{n-2}} &\leq\Bigg(|w|+\Bigg(1-\bigg(\frac{\ell}{\beta(n,2)}\bigg)^{\frac{n-2}{2}}\Bigg)^{\frac{2}{n}}\Bigg)^{\frac{n}{n-2}}\leq (1+\eta)|w|^{\frac{n}{n-2}}+C_\eta \Bigg(1-\bigg(\frac{\ell}{\beta(n,2)}\bigg)^{\frac{n-2}{2}}\Bigg)^{\frac{2}{n-2}}\\
 &=\frac{\beta(n,2)}{\ell}|w|^{\frac{n}{n-2}}+\Bigg(1-\bigg(\frac{\ell}{\beta(n,2)}\bigg)^{\frac{n-2}{2}}\Bigg)^{-\frac{2}{n-2}}\Bigg(1-\bigg(\frac{\ell}{\beta(n,2)}\bigg)^{\frac{n-2}{2}}\Bigg)^{\frac{2}{n-2}}\\
 &=\frac{\beta(n,2)}{\ell}|w|^{\frac{n}{n-2}}+1\quad\text{in}\quad S.
\end{align*}
By gathering all the above information, we obtain from the singular Adams' inequality \cite[Theorem 1.2]{Lam-Lu-2012@@@} with homogeneous Navier boundary conditions that
\begin{equation}\label{eq7.2}
\begin{aligned}
 I_1 &  \leq \int_S\frac{\operatorname{\exp}\big(\ell(1-\frac{\gamma}{n}) |u|^{\frac{n}{n-2}}\big)}{|x|^\gamma}\,\mathrm{d}x\leq \operatorname{\exp}\bigg(\ell\bigg(1-\frac{\gamma}{n}\bigg)\bigg)\int_S\frac{\operatorname{\exp}\big(\beta_{\gamma,n} |w|^{\frac{n}{n-2}}\big)}{|x|^\gamma}\,\mathrm{d}x \\ 
 &\leq \operatorname{\exp}\bigg(\ell\bigg(1-\frac{\gamma}{n}\bigg)\bigg)C(\gamma,n)|S|^{1-\frac{\gamma}{n}}\leq C(\gamma,p,n)\bigg(1-\bigg(\frac{\ell}{\beta(n,2)}\bigg)^{\frac{n-2}{2}}\bigg)^{-\frac{2p^\ast}{n}\big(1-\frac{\gamma}{n}\big)},
\end{aligned}
\end{equation}
where $C(\gamma,p,n)>0$ is a constant depending only on $\gamma$, $p$ and $n$. Hence, we obtain from \eqref{eq7.1} and \eqref{eq7.2} that
$$\text{ATSC}(\ell,\gamma)\leq C(\gamma,p,n)\bigg(1-\bigg(\frac{\ell}{\beta(n,2)}\bigg)^{\frac{n-2}{2}}\bigg)^{-\frac{2p^\ast}{n}\big(1-\frac{\gamma}{n}\big)}. $$
To prove the sharpness of $\beta(n,2)$, that is, $\text{ATSC}(\beta(n,2),\gamma)=+\infty$, inspired by Adams \cite{Adams-1988}, we consider $f\in C^\infty([0,1])$ satisfying
$$ f(0)=f^\prime(0)=f^{\prime\prime}(0)=0;\quad f(1)=f^\prime(1)=1\quad\text{and}\quad f^{\prime\prime}(1)=0.$$
Fix $0<\varepsilon<\frac{1}{2}$ and define the function
$$\Psi(s)=\begin{cases}
    \varepsilon f(\frac{s}{\varepsilon})\quad\quad\quad\quad\quad\quad\text{if}\quad 0<s\leq\varepsilon,\\
    s\quad\quad\quad\quad\quad\quad\quad\quad\text{if}\quad \varepsilon<s\leq1-\varepsilon,\\
    1-\varepsilon f(\frac{1-s}{\varepsilon})\quad\quad\quad\text{if}\quad 1-\varepsilon<s\leq 1,\\
    1 \quad\quad\quad\quad\quad\quad\quad\quad\text{if}\quad s>1,
\end{cases} $$
with $\text{supp}(\Psi)\subset(0,+\infty)$. Let $v\colon~(0,+\infty)\to\mathbb{R}$ be such that
\begin{align}\label{eq7.3}
 v(s)=\Psi\bigg(\frac{\log\frac{1}{s}}{\log\frac{1}{r}}\bigg),~\forall~s>0\quad\text{and}\quad r\in(0,1).   
\end{align}
It follows that $v(|x|)\in E$ and $v(|x|)=1$ for all $x\in B_r$. Moreover, one can easily obtain
$$ \Delta v(|x|)=(-1)\frac{\alpha(n,2)}{\omega_{n-1}}\Psi^\prime\bigg(\frac{\log\frac{1}{s}}{\log\frac{1}{r}}\bigg)\Big(\log\frac{1}{r}\Big)^{-1}|x|^{-2}+O\Big(\Big(\log\frac{1}{r}\Big)^{-2}\Big)|x|^{-2}, $$
where $\alpha(n,2)=\omega_{n-1}(n-2)$ and the big $O$ term involves terms that are constant multiple of $\Psi^{\prime\prime}\Big(\frac{\log\frac{1}{s}}{\log\frac{1}{r}}\Big)\big(\log\frac{1}{r}\big)^{-2}$. In addition, by direct computations, we also have
$$\int_{\mathbb{R}^n}|\Delta v(|x|)|^{\frac{n}{2}}\,\mathrm{d}x=\omega_{n-1}^{1-\frac{n}{2}}\alpha(n,2)^{\frac{n}{2}}\Big(\log\frac{1}{r}\Big)^{1-\frac{n}{2}}\Theta_{\frac{n}{2}}, $$
where
\begin{align*}
  \Theta_{\frac{n}{2}}&=\Big(\log\frac{1}{r}\Big)^{-1}\int_{0}^{\infty} \Bigg|(-1)\Psi^\prime\bigg(\frac{\log\frac{1}{s}}{\log\frac{1}{r}}\bigg)+O\Big(\Big(\log\frac{1}{r}\Big)^{-1}\Big)\Bigg|^{\frac{n}{2}}\,\frac{\mathrm{d}s}{s}\\
  &=\Big(\log\frac{1}{r}\Big)^{-1}\Bigg(\int_{0}^{r}+\int_{r}^{r^{1-\varepsilon}}+\int_{r^{1-\varepsilon}}^{r^\varepsilon}+\int_{r^\varepsilon}^{1}+\int_{1}^{\infty}\Bigg) \Bigg|(-1)\Psi^\prime\bigg(\frac{\log\frac{1}{s}}{\log\frac{1}{r}}\bigg)+O\Big(\Big(\log\frac{1}{r}\Big)^{-1}\Big)\Bigg|^{\frac{n}{2}}\,\frac{\mathrm{d}s}{s}\\
  &\leq 1+2\varepsilon\Big(\|f^\prime\|_\infty+O\Big(\Big(\log\frac{1}{r}\Big)^{-1}\Big)\Big)^{\frac{n}{2}},
\end{align*}
where we have used the following estimates
\begin{itemize}
    \item[\textnormal{(a)}] There holds $$\Big(\log\frac{1}{r}\Big)^{-1}\int_{r^{1-\varepsilon}}^{r^\varepsilon} \Bigg|(-1)\Psi^\prime\bigg(\frac{\log\frac{1}{s}}{\log\frac{1}{r}}\bigg)+O\Big(\Big(\log\frac{1}{r}\Big)^{-1}\Big)\Bigg|^{\frac{n}{2}}\,\frac{\mathrm{d}s}{s}\leq 1, $$
    \item[\textnormal{(b)}] For all $(a,b)\in\{(0,r),(1,+\infty)\}$, there holds
    $$\Big(\log\frac{1}{r}\Big)^{-1}\int_{a}^{b} \Bigg|(-1)\Psi^\prime\bigg(\frac{\log\frac{1}{s}}{\log\frac{1}{r}}\bigg)+O\Big(\Big(\log\frac{1}{r}\Big)^{-1}\Big)\Bigg|^{\frac{n}{2}}\,\frac{\mathrm{d}s}{s}=0, $$
    \item[\textnormal{(c)}] For all $(a,b)\in\{(r,r^{1-\varepsilon}),(r^{\varepsilon},1)\}$, there holds
    \begin{align*} \Big(\log\frac{1}{r}\Big)^{-1}\int_{a}^{b} \Bigg|(-1)\Psi^\prime\bigg(\frac{\log\frac{1}{s}}{\log\frac{1}{r}}\bigg)+O\Big(\Big(\log\frac{1}{r}\Big)^{-1}\Big)\Bigg|^{\frac{n}{2}}\,\frac{\mathrm{d}s}{s}\leq&~ \varepsilon\Big(\|f^\prime\|_\infty+O\Big(\Big(\log\frac{1}{r}\Big)^{-1}\Big)\Big)^{\frac{n}{2}}.\end{align*} 
\end{itemize}
Similarly, we can prove that
$$\int_{\mathbb{R}^n}|\Delta v(|x|)|^{p}\,\mathrm{d}x=\omega_{n-1}^{1-p}\alpha(n,2)^p\Big(\log\frac{1}{r}\Big)^{1-p}\Theta_p, $$
where
\begin{align*}
  \Theta_p\leq 1+2\varepsilon\Big(\|f^\prime\|_\infty+O\Big(\Big(\log\frac{1}{r}\Big)^{-1}\Big)\Big)^{p}.
\end{align*}
Define $\bar{v}\colon(0,+\infty)\to\mathbb{R}$ by
$$\bar{v}(s)=\Big(\log\frac{1}{r}\Big)^{1-\frac{2}{n}}v(s),~\forall~s>0\quad\text{and}\quad r\in(0,1). $$
It follows immediately that
$$ \bar{v}(|x|)=\Big(\log\frac{1}{r}\Big)^{1-\frac{2}{n}},~\forall~x\in B_r,\quad\|\Delta \bar{v}\|^{\frac{n}{2}}_{\frac{n}{2}}=\omega_{n-1}^{1-\frac{n}{2}}\alpha(n,2)^{\frac{n}{2}}\Theta_{\frac{n}{2}} $$
and $$\|\Delta \bar{v}\|^p_p=\Big(\log\frac{1}{r}\Big)^{1-\frac{2p}{n}}\omega_{n-1}^{1-p}\alpha(n,2)^p\Theta_p <\Big(\frac{1}{r}\Big)^{1-\frac{2p}{n}}\omega_{n-1}^{1-p}\alpha(n,2)^p\Theta_p,$$
where the last inequality is obtained by using the elementary inequality $\frac{x-1}{x}\leq\log x\leq x-1$ for all $x>0 $. In virtue of the above information, one has
\begin{align*}
    &\text{ATSC}((\beta(n,2),\gamma)\\ &\geq\cfrac{1}{\Big\|\Delta\Big(\frac{\bar{v}}{\|\Delta \bar{v}\|_{\frac{n}{2}}}\Big)\Big\|_p^{p^\ast(1-\frac{\gamma}{n})}}\int_{\{x\in \mathbb{R}^n\colon~|x|<r\}}\cfrac{\Phi_{\beta_{\gamma,n},j_0}\Big(\frac{\bar{v}}{\|\Delta \bar{v}\|_{\frac{n}{2}}}\Big)}{|x|^\gamma}\,\mathrm{d}x\\
    &>\frac{\omega_{n-1}}{n-\gamma} ~C(n,\varepsilon,r,\|f^\prime\|_\infty)~r^{(1+n)(1-\frac{\gamma}{n})}~\Phi_{\beta_{\gamma,n},j_0}\Bigg(\cfrac{\big(\log\frac{1}{r}\big)^{1-\frac{2}{n}}}{\big(\omega_{n-1}^{1-\frac{n}{2}}\alpha(n,2)^{\frac{n}{2}}\Theta_{\frac{n}{2}} \big)^{\frac{2}{n}}}\Bigg)\\
    &\gtrsim \frac{\omega_{n-1}}{n-\gamma} ~C(n,\varepsilon,r,\|f^\prime\|_\infty)~r^{1-\frac{\gamma}{n}}~\Phi_{\beta_{\gamma,n},j_0}\Bigg(\cfrac{\big(\log\frac{1}{r}\big)^{1-\frac{2}{n}}}{\big(\omega_{n-1}^{1-\frac{n}{2}}\alpha(n,2)^{\frac{n}{2}}\Theta_{\frac{n}{2}} \big)^{\frac{2}{n}}}\Bigg)\\
    &\sim  \frac{\omega_{n-1}}{n-\gamma} ~C(n,\varepsilon,r,\|f^\prime\|_\infty)~r^{1-\frac{\gamma}{n}}~\operatorname{\exp}\Bigg(\beta_{\gamma,n} \Bigg|\cfrac{\big(\log\frac{1}{r}\big)^{1-\frac{2}{n}}}{\big(\omega_{n-1}^{1-\frac{n}{2}}\alpha(n,2)^{\frac{n}{2}}\Theta_{\frac{n}{2}} \big)^{\frac{2}{n}}}\Bigg|^{\frac{n}{n-2}}\Bigg)\\
    & =\frac{\omega_{n-1}}{n-\gamma} ~C(n,\varepsilon,r,\|f^\prime\|_\infty)\operatorname{\exp}\Bigg[\bigg(1-\frac{\gamma}{n}\bigg)\ln \frac{1}{r}\Bigg(\frac{n}{\Big(1+2\varepsilon\Big(\|f^\prime\|_\infty+O\Big(\Big(\log\frac{1}{r}\Big)^{-1}\Big)\Big)^{\frac{n}{2}}\Big)^{\frac{2}{n-2}}}-1\Bigg)\Bigg]\\
    &\to +\infty\quad\text{as}\quad r\to 0^{+},~\text{if we take }~\varepsilon>0~\text{sufficiently small enough},
\end{align*}
where
$$ C(n,\varepsilon,r,\|f^\prime\|_\infty)=\Bigg(\cfrac{\big(\omega_{n-1}^{1-\frac{n}{2}}\alpha(n,2)^{\frac{n}{2}}\Theta_{\frac{n}{2}} \big)^{\frac{2}{n}}}{\big(\omega_{n-1}^{1-p}\alpha(n,2)^{p}\Theta_{p} \big)^{\frac{1}{p}}}\Bigg)^{p^\ast(1-\frac{\gamma}{n})}. $$
This shows the validity of the sharpness of $\beta(n,2)$ and thus, the claim is well-established. Next, we consider the same sequence $\{\xi_k\}_k\subset E$ that is used in the proof of Theorem \ref{thm1.1}. Notice that the following estimates hold
$$\|\Delta \xi_k\|^{\frac{n}{2}}_{\frac{n}{2}}=1+O((\ln k)^{-1})\quad\text{and} \quad\|\Delta \xi_k\|^p_p\leq  C_1 (\ln k)^{-\frac{2p}{n}}+C_2 (\ln k)^{-\frac{2p}{n}}\frac{1}{k^{\frac{n-2p}{n}}},$$
where $C_1$ and $C_2$ are two positive constants. Now, we define the sequence $\{w_k\}_k$ as follows
$$ w_k=\frac{\xi_k}{\|\Delta \xi_k\|_{\frac{n}{2}}},~\forall~k\in\mathbb{N}. $$
It follows at once that
$$\|\Delta w_k\|_{\frac{n}{2}}=1\quad\text{and}\quad\|\Delta w_k\|^p_p\leq  \|\Delta \xi_k\|^p_p\leq C_1 (\ln k)^{-\frac{2p}{n}}+C_2 (\ln k)^{-\frac{2p}{n}}\frac{1}{k^{\frac{n-2p}{n}}}. $$
Moreover, we have
\begin{align*}
    &\text{ATSC}(\ell,\gamma)\\
    &\geq \cfrac{1}{\|\Delta w_k\|_p^{p^\ast(1-\frac{\gamma}{n})}}\int_{\{x\in\mathbb{R}^n\colon~|x|< k^{-\frac{1}{n}} \}}\frac{\Phi_{\ell(1-\frac{\gamma}{n}),j_0}(w_k)}{|x|^\gamma}\,\mathrm{d}x\\
    &\geq \frac{\omega_{n-1}(\ln k)^{\frac{2p^\ast}{n}(1-\frac{\gamma}{n})}}{\big(C_1+C_2 k^{-(\frac{n-2p}{n})}\big)^{\frac{p^\ast}{p}(1-\frac{\gamma}{n})}}\int_{0}^{k^{-\frac{1}{n}}}\frac{\Phi_{\ell(1-\frac{\gamma}{n}),j_0}\bigg(\frac{1}{\|\Delta \xi_k\|_{\frac{n}{2}}}\bigg(\Big(\frac{\ln k}{\beta(n,2)}\Big)^{1-\frac{2}{n}}+(1-k^{\frac{2}{n}}r^2)\frac{n \beta(n,2)^{\frac{2}{n}-1}}{2(\ln k)^{\frac{2}{n}}}\bigg)\bigg)}{r^{\gamma-n+1}}\,\mathrm{d}r\\
    &\geq \frac{(\ln k)^{\frac{2p^\ast}{n}(1-\frac{\gamma}{n})}}{\big(C_1+C_2 k^{-(\frac{n-2p}{n})}\big)^{\frac{p^\ast}{p}(1-\frac{\gamma}{n})}}~\frac{\omega_{n-1}}{(n-\gamma)k^{\frac{n-\gamma}{n}}}~\Phi_{\ell(1-\frac{\gamma}{n}),j_0}\bigg(\frac{1}{\|\Delta \xi_k\|_{\frac{n}{2}}}\Big(\frac{\ln k}{\beta(n,2)}\Big)^{1-\frac{2}{n}}\bigg)\\
    &\sim \frac{(\ln k)^{\frac{2p^\ast}{n}(1-\frac{\gamma}{n})}}{\big(C_1+C_2 k^{-(\frac{n-2p}{n})}\big)^{\frac{p^\ast}{p}(1-\frac{\gamma}{n})}}~\frac{\omega_{n-1}}{(n-\gamma)}~\operatorname{\exp}\Bigg[\Big(1-\frac{\gamma}{n}\Big)\frac{\ell}{\beta(n,2)}\ln k\Bigg(\frac{1}{\|\Delta \xi_k\|_{\frac{n}{2}}^{\frac{n}{n-2}}}-\frac{\beta(n,2)}{\ell}\Bigg)\Bigg].
\end{align*}
For sufficiently large $k$, one has
$$\frac{1}{\big(C_1+C_2 k^{-(\frac{n-2p}{n})}\big)^{\frac{p^\ast}{p}(1-\frac{\gamma}{n})}}\thickapprox \frac{1}{C_1^{\frac{p^\ast}{p}(1-\frac{\gamma}{n})}}\quad\text{and}\quad \frac{1}{\|\Delta \xi_k\|_{\frac{n}{2}}^{\frac{n}{n-2}}}-\frac{\beta(n,2)}{\ell} \thickapprox 1-\frac{\beta(n,2)}{\ell}.$$
In conclusion, we have
$$\text{ATSC}(\ell,\gamma)\gtrsim (\ln k)^{\frac{2p^\ast}{n}(1-\frac{\gamma}{n})}~\operatorname{\exp}\Bigg[\Big(1-\frac{\gamma}{n}\Big)\ln k\bigg(\frac{\ell}{\beta(n,2)}-1\bigg)\Bigg].$$
Notice that when $\ell\underset{\approx}{<}\beta(n,2)$, we can able to choose $k$ sufficiently large enough such that
$$ \ln k\thickapprox\frac{p}{1-\frac{\ell}{\beta(n,2)}} \quad\text{and }\quad  \Big(1-\frac{\gamma}{n}\Big)\ln k\Big(\frac{\ell}{\beta(n,2)}-1\Big)\thickapprox -\Big(1-\frac{\gamma}{n}\Big)p.$$
Therefore, whenever $\ell\underset{\approx}{<}\beta(n,2)$, we have
$$\text{ATSC}(\ell,\gamma)\gtrsim \cfrac{\operatorname{\exp}\Big(-\Big(1-\frac{\gamma}{n}\Big)p\Big)}{\Big(1-\frac{\ell}{\beta(n,2)}\Big)^{\frac{2p^\ast}{n}\big(1-\frac{\gamma}{n}\big)}}\thickapprox \cfrac{\big(\frac{n-2}{2}\big)^{\frac{2p^\ast}{n}\big(1-\frac{\gamma}{n}\big)} ~\operatorname{\exp}\Big(-\Big(1-\frac{\gamma}{n}\Big)p\Big)}{\bigg(1-\big(\frac{\ell}{\beta(n,2)}\big)^{\frac{n-2}{2}}\bigg)^{\frac{2p^\ast}{n}\big(1-\frac{\gamma}{n}\big)}},$$
where we have used
$$\frac{1-\Big(\frac{\ell}{\beta(n,2)}\Big)^{\frac{n-2}{2}}}{1-\frac{\ell}{\beta(n,2)}}\thickapprox\frac{n-2}{2}\quad\text{for}\quad \ell\underset{\approx}{<}\beta(n,2). $$
This yields that
$$ \text{ATSC}(\ell,\gamma)\geq c(n,\gamma,p)\bigg(1-\bigg(\frac{\ell}{\beta(n,2)}\bigg)^{\frac{n-2}{2}}\bigg)^{-\frac{2p^\ast}{n}\big(1-\frac{\gamma}{n}\big)},$$
where $c(n,\gamma,p)>0$ is a constant depending only on $n$, $\gamma$ and $p$. This finishes the proof.
\end{proof}
\section{Equivalence of critical and subcritical sharp singular Adams' type inequalities: Proof of
 Theorem \ref{thm5.22}}\label{sec6}
The purpose of this section is to establish the relationship between the suprema for the critical and subcritical sharp singular Adams' type inequalities. Initially, we prove the important lemma shown below.
\begin{lemma}\label{lem7.2}
 The subcritical sharp singular Adams' type inequality in $E$ is a consequence of the critical sharp singular Adams' type inequality in $E$. More specifically, if $\text{ATC}_{a,b}(\gamma)<+\infty$, then $\text{ATSC}(\ell,\gamma)<+\infty$. Furthermore, there holds
 $$ \text{ATSC}(\ell,\gamma)\leq \Bigg(\cfrac{(\frac{\ell}{\beta(n,2)}\big)^{(\frac{n-2}{n})b}}{1-\big(\frac{\ell}{\beta(n,2)}\big)^{(\frac{n-2}{n})a}}\Bigg)^{\frac{p^\ast}{b}\big(1-\frac{\gamma}{n}\big)}\text{ATC}_{a,b}(\gamma).$$
 In particular, if $\text{ATC}_{\frac{n}{2},\frac{n}{2}}(\gamma)<+\infty$, then
  $$ \text{ATSC}(\ell,\gamma)\leq \Bigg(\cfrac{\big(\frac{\ell}{\beta(n,2)}\big)^{\frac{n-2}{2}}}{1-\big(\frac{\ell}{\beta(n,2)}\big)^{\frac{n-2}{2}}}\Bigg)^{\frac{2p^\ast}{n}\big(1-\frac{\gamma}{n}\big)}\text{ATC}_{\frac{n}{2},\frac{n}{2}}(\gamma).$$
\end{lemma}
\begin{proof}
Let $u\in E$ be such that $\|\Delta u\|_{\frac{n}{2}}\leq 1$ and $\|\Delta u\|_p=1$. Define a new function $w$ as follows
$$ w(x)=\bigg(\frac{\ell}{\beta(n,2)}\bigg)^{\frac{n-2}{n}}u(\lambda x)\quad\text{with}\quad \lambda=\Bigg(\cfrac{(\frac{\ell}{\beta(n,2)}\big)^{(\frac{n-2}{n})b}}{1-\big(\frac{\ell}{\beta(n,2)}\big)^{(\frac{n-2}{n})a}}\Bigg)^{\frac{p}{b(n-2p)}}. $$
By direct calculations, we have
$$\|\Delta w\|^a_{\frac{n}{2}}=\bigg(\frac{\ell}{\beta(n,2)}\bigg)^{\big(\frac{n-2}{n}\big)a}\|\Delta u\|^a_{\frac{n}{2}}\leq \bigg(\frac{\ell}{\beta(n,2)}\bigg)^{\big(\frac{n-2}{n}\big)a} $$
and
\begin{align*}
 \|\Delta w\|^b_p &=\bigg(\frac{\ell}{\beta(n,2)}\bigg)^{(\frac{n-2}{n})b}\lambda^{-b\big(\frac{n-2p}{p}\big)}\|\Delta u\|^b_p= 1- \bigg(\frac{\ell}{\beta(n,2)}\bigg)^{(\frac{n-2}{n})a}.
\end{align*}
It follows that $\|\Delta w\|^a_{\frac{n}{2}}+\|\Delta w\|^b_p\leq 1$. Now, we deduce from the definition of $\text{ATC}_{a,b}(\gamma)$ that 
\begin{align*}
 \int_{\mathbb{R}^n}\frac{\Phi_{\ell(1-\frac{\gamma}{n}),j_0}(u)}{|x|^\gamma}\,\mathrm{d}x &= \int_{\mathbb{R}^n}\frac{\Phi_{\ell(1-\frac{\gamma}{n}),j_0}(u(\lambda x))}{|\lambda x|^\gamma}\,\mathrm{d}(\lambda x)=\lambda^{n-\gamma} \int_{\mathbb{R}^n}\frac{\Phi_{\beta_{\gamma,n},j_0}(w)}{|x|^\gamma}\,\mathrm{d}x\\
 &\leq \Bigg(\cfrac{\big(\frac{\ell}{\beta(n,2)}\big)^{(\frac{n-2}{n})b}}{1-\big(\frac{\ell}{\beta(n,2)}\big)^{(\frac{n-2}{n})a}}\Bigg)^{\frac{p^\ast}{b}\big(1-\frac{\gamma}{n}\big)}\text{ATC}_{a,b}(\gamma).
\end{align*}
The above inequality ensures that the proof of Lemma \ref{lem7.2} is finished at this point.
\end{proof}
Now, we are in a suitable position to prove Theorem \ref{thm5.22} as follows.
\begin{proof}[\bf{Proof of Theorem \ref{thm5.22}}.]
Let $0<b\leq\frac{n}{2}$ and $u\in E\setminus\{0\}$ be such that $\|\Delta u\|^a_{\frac{n}{2}}+\|\Delta u\|^b_p\leq 1$. Now, we define
$$ \|\Delta u\|_{\frac{n}{2}}=\theta\in(0,1)\quad\text{and}\quad\|\Delta u\|^b_p\leq 1-\theta^a.$$
If $\cfrac{1}{2^{\frac{n-2p}{p}}}<\theta<1$, then we again define
$$w(x)=\frac{u(\lambda x)}{\theta}\quad\text{and}\quad\lambda=\bigg(\frac{(1-\theta^a)^{\frac{1}{b}}}{\theta}\bigg)^{\frac{p}{n-2p}} .$$
This yields that
$$\|\Delta w\|_{\frac{n}{2}}=1\quad\text{and}\quad \|\Delta w\|^p_p=\frac{\|\Delta u\|^p_p}{\lambda^{n-2p}\theta^p}\leq\frac{(1-\theta^a)^{\frac{p}{b}}}{\lambda^{n-2p}\theta^p}=1.$$
Now, by employing Theorem \ref{thm5.1}, one has
\begin{align*}
   \int_{\mathbb{R}^n}\frac{\Phi_{\beta_{\gamma,n},j_0}(u)}{|x|^\gamma}\,\mathrm{d}x&=\int_{\mathbb{R}^n}\frac{\Phi_{\beta_{\gamma,n},j_0}(u(\lambda x))}{|\lambda x|^\gamma}\,\mathrm{d}(\lambda x)\leq \lambda^{n-\gamma}\int_{\mathbb{R}^n}\frac{\Phi_{\theta^{\frac{n}{n-2}}\beta_{\gamma,n},j_0}(u)}{|x|^\gamma}\,\mathrm{d}x \\
   &\leq \lambda^{n-\gamma}\text{ATSC}(\theta^{\frac{n}{n-2}}\beta(n,2),\gamma)\leq \bigg(\frac{(1-\theta^a)^{\frac{1}{b}}}{\theta}\bigg)^{p^\ast(1-\frac{\gamma}{n})}\frac{C(\gamma,p,n)}{(1-\theta^{\frac{n}{2}})^{\frac{2p^\ast}{n}(1-\frac{\gamma}{n})}}\\
   &\leq \frac{(1-\theta^a)^{\frac{p^\ast}{b}(1-\frac{\gamma}{n})}}{(1-\theta^{\frac{n}{2}})^{\frac{2p^\ast}{n}(1-\frac{\gamma}{n})}}C(\gamma,p,n)\leq C(\gamma,p,n,a,b),   
\end{align*}
 thanks to the fact that $b\leq\frac{n}{2}$, where $C(\gamma,p,n,a,b)>0$ is a constant depending only on $ \gamma,p,n,a$ and $b$.
If $0<\theta\leq \cfrac{1}{2^{\frac{n-2p}{p}}} $, then we consider
$$ w(x)=2^{\frac{n-2p}{p}}u(2x). $$
It follows directly that
$$ \|\Delta w\|_{\frac{n}{2}}=2^{\frac{n-2p}{p}} \|\Delta u\|_{\frac{n}{2}}\leq 1\quad\text{and}\quad \|\Delta w\|_p=\|\Delta u\|_p\leq 1.$$
Due to Theorem \ref{thm5.1}, we have
\begin{align*}
   \int_{\mathbb{R}^n}\frac{\Phi_{\beta_{\gamma,n},j_0}(u)}{|x|^\gamma}\,\mathrm{d}x&=\int_{\mathbb{R}^n}\frac{\Phi_{\beta_{\gamma,n},j_0}(u(2 x))}{|2 x|^\gamma}\,\mathrm{d}(2 x)\leq 2^{n-\gamma}\int_{\mathbb{R}^n}\frac{\Phi_{ 2^{-\big(\frac{n^2}{(n-2)p^\ast}\big)}\beta_{\gamma,n},j_0}(w)}{|x|^\gamma}\,\mathrm{d}x \\
   &\leq 2^{n-\gamma}\text{ATSC}\Big(2^{-\big(\frac{n^2}{(n-2)p^\ast}\big)}\beta(n,2),\gamma\Big)\\
   &\leq 2^{n-\gamma}\frac{C(\gamma,p,n)}{\Big(1-2^{-\frac{n^2}{2p^\ast}}\Big)^{\frac{2p^\ast}{n}(1-\frac{\gamma}{n})}}\leq C(\gamma,p,n) ,   
\end{align*}
where $C(\gamma,p,n)>0$ is a constant depending only on $ \gamma,n$ and $p$. Hence, we deduce that $\text{ATC}_{a,b}(\gamma)<+\infty$ for $b\leq \frac{n}{2}$. Next, to prove the sharpness of $\beta(n,2)$, inspired by Theorem \ref{thm5.1}, we assume that
$$\bar{v}(s)=\Big(\log\frac{1}{r}\Big)^{1-\frac{2}{n}}v(s),~\forall~s>0\quad\text{and}\quad r\in(0,1), $$
where $v$ is defined as in \eqref{eq7.3}. Further, we define another function $\bar{w}$ as follows
$$\bar{w}(s)=\delta_r\frac{\bar{v}}{\|\Delta \bar{v}\|_{\frac{n}{2}}},~\forall~s>0\quad\text{and}\quad \delta_r\in(0,1), $$
with $\delta_r\to 1$ as $r\to 0^+$ and there holds
$$\delta^a_r+\delta^b_r\frac{\|\Delta \bar{v}\|^b_p}{\|\Delta \bar{v}\|^b_{\frac{n}{2}}}=1. $$
Notice that $\|\Delta \bar{w}\|^a_{\frac{n}{2}}+\|\Delta \bar{w}\|^b_p=1$. Moreover, by choosing $\ell>\beta(n,2)$, we obtain
\begin{align*}
    \int_{\mathbb{R}^n}&\frac{\Phi_{\ell(1-\frac{\gamma}{n}),j_0}(\bar{w})}{|x|^\gamma}\,\mathrm{d}x\geq \int_{\{x\in \mathbb{R}^n\colon~|x|<r\}}\cfrac{\Phi_{\ell(1-\frac{\gamma}{n}),j_0}\Big(\delta_r\frac{\bar{v}}{\|\Delta \bar{v}\|_{\frac{n}{2}}}\Big)}{|x|^\gamma}\,\mathrm{d}x\\
    &=\frac{\omega_{n-1}}{n-\gamma}~r^{n-\gamma}~\Phi_{\ell(1-\frac{\gamma}{n}),j_0}\bigg(\frac{\delta_r(\log\frac{1}{r})^{1-\frac{2}{n}}}{\big(\omega_{n-1}^{1-\frac{n}{2}}\alpha(n,2)^{\frac{n}{2}}\Theta_{\frac{n}{2}} \big)^{\frac{2}{n}}}\bigg)\\
    &\sim \frac{\omega_{n-1}}{n-\gamma}~r^{n-\gamma}~\operatorname{\exp}\Bigg[\ell\Big(1-\frac{\gamma}{n}\Big)\Bigg|\frac{\delta_r(\log\frac{1}{r})^{1-\frac{2}{n}}}{\big(\omega_{n-1}^{1-\frac{n}{2}}\alpha(n,2)^{\frac{n}{2}}\Theta_{\frac{n}{2}} \big)^{\frac{2}{n}}}\Bigg|^{\frac{n}{n-2}}\Bigg]\\
    &=\frac{\omega_{n-1}}{n-\gamma}~\operatorname{\exp}\Bigg[n\Big(1-\frac{\gamma}{n}\Big)\log\frac{1}{r}\Bigg(\frac{\ell}{\beta(n,2)}\frac{\delta_r^{\frac{n}{n-2}}}{\Big(1+2\varepsilon\Big(\|f^\prime\|_\infty+O\Big(\Big(\log\frac{1}{r}\Big)^{-1}\Big)\Big)^{\frac{n}{2}}\Big)^{\frac{2}{n-2}}}-1\Bigg)\Bigg]\\
    &\to+\infty\quad\text{as}\quad r\to 0^+,~\text{if we take }~\varepsilon>0~\text{sufficiently small enough}.
\end{align*}
This finishes the proof of the sharpness of $\beta(n,2)$. Finally, we deduce from Lemma \ref{lem7.2} that
\begin{align}\label{eq7.4}
    \sup_{\ell\in(0,\beta(n,2))}\Bigg(\cfrac{1-\big(\frac{\ell}{\beta(n,2)}\big)^{(\frac{n-2}{n})a}}{(\frac{\ell}{\beta(n,2)}\big)^{(\frac{n-2}{n})b}}\Bigg)^{\frac{p^\ast}{b}\big(1-\frac{\gamma}{n}\big)}\text{ATSC}(\ell,\gamma)\leq \text{ATC}_{a,b}(\gamma).
\end{align}
Let $\{v_k\}_k\subset E\setminus\{0\}$ be a maximizing sequence of $\text{ATC}_{a,b}(\gamma)$. It follows that $\|\Delta v_k\|^a_{\frac{n}{2}}+\|\Delta v_k\|^b_p\leq 1$ and there holds
$$\lim_{k\to\infty}\int_{\mathbb{R}^n}\frac{\Phi_{\beta_{\gamma,n},j_0}(v_k)}{|x|^\gamma}\,\mathrm{d}x=\text{ATC}_{a,b}(\gamma). $$
For each $k\in\mathbb{N}$, we define the  sequence $\{w_k\}_k$ as follows
$$ w_k(x)=\frac{v_k(\lambda_k x)}{\|\Delta v_k\|_{\frac{n}{2}}},\quad\text{where}\quad\lambda_k=\bigg(\frac{1-\|\Delta v_k\|^a_{\frac{n}{2}}}{\|\Delta v_k\|^b_{\frac{n}{2}}}\bigg)^{\frac{p^\ast}{nb}} .$$
Thus, we have
$$ \|\Delta w_k\|_{\frac{n}{2}}=1\quad\text{and}\quad \|\Delta w_k\|_p=\frac{\|\Delta v_k\|_p}{\lambda_k^{\frac{n-2p}{p}}\|\Delta v_k\|_{\frac{n}{2}}}\leq \frac{(1-\|\Delta v_k\|^a_{\frac{n}{2}})^{\frac{1}{b}}}{\lambda_k^{\frac{n-2p}{p}}\|\Delta v_k\|_{\frac{n}{2}}}=1.$$
Hence, we obtain from the definition of $\text{ATSC}(\ell,\gamma)$ that
\begin{align*}
  \int_{\mathbb{R}^n}\frac{\Phi_{\beta_{\gamma,n},j_0}(v_k)}{|x|^\gamma}\,\mathrm{d}x&=\int_{\mathbb{R}^n}\frac{\Phi_{\beta_{\gamma,n},j_0}(v_k(\lambda_k x))}{|\lambda_k x|^\gamma}\,\mathrm{d}(\lambda_k x)\leq \lambda_k^{n-\gamma}\int_{\mathbb{R}^n}\frac{\Phi_{ \|\Delta v_k\|_{\frac{n}{2}}^{\frac{n}{n-2}}\beta_{\gamma,n},j_0}(w_k)}{|x|^\gamma}\,\mathrm{d}x \\
   &\leq \lambda_k^{n-\gamma}\text{ATSC}\Big(\|\Delta v_k\|_{\frac{n}{2}}^{\frac{n}{n-2}}\beta(n,2),\gamma\Big)\\
   &\leq \sup_{\ell\in(0,\beta(n,2))}\Bigg(\cfrac{1-\big(\frac{\ell}{\beta(n,2)}\big)^{(\frac{n-2}{n})a}}{\big(\frac{\ell}{\beta(n,2)}\big)^{(\frac{n-2}{n})b}}\Bigg)^{\frac{p^\ast}{b}\big(1-\frac{\gamma}{n}\big)}\text{ATSC}(\ell,\gamma).
\end{align*}
From the above inequality, we have
\begin{align}\label{eq7.5}
  \text{ATC}_{a,b}(\gamma)\leq \sup_{\ell\in(0,\beta(n,2))}\Bigg(\cfrac{1-\big(\frac{\ell}{\beta(n,2)}\big)^{(\frac{n-2}{n})a}}{\big(\frac{\ell}{\beta(n,2)}\big)^{(\frac{n-2}{n})b}}\Bigg)^{\frac{p^\ast}{b}\big(1-\frac{\gamma}{n}\big)}\text{ATSC}(\ell,\gamma).  
\end{align}
In view of \eqref{eq7.4} and \eqref{eq7.5}, we deduce that \eqref{eq1.15} holds. To complete the proof, we claim that $\text{ATC}_{a,b}(\gamma)<+\infty$ implies $b\leq \frac{n}{2}$. Indeed, if not, let there exists some $b>\frac{n}{2}$ such that $\text{ATC}_{a,b}(\gamma)<+\infty$. Thus, one can obtain from \eqref{eq1.15} that
\begin{align}\label{eq7.6}
    \limsup_{\ell\to\beta(n,2)^-}\Bigg(\cfrac{1-\big(\frac{\ell}{\beta(n,2)}\big)^{(\frac{n-2}{n})a}}{\big(\frac{\ell}{\beta(n,2)}\big)^{(\frac{n-2}{n})b}}\Bigg)^{\frac{p^\ast}{b}\big(1-\frac{\gamma}{n}\big)}\text{ATSC}(\ell,\gamma)<+\infty.
\end{align}
On the other hand, by Theorem \ref{thm5.1}, we have
\begin{align}\label{eq7.7}
    \limsup_{\ell\to\beta(n,2)^-}\Bigg(\cfrac{1-\big(\frac{\ell}{\beta(n,2)}\big)^{(\frac{n-2}{n})a}}{\big(\frac{\ell}{\beta(n,2)}\big)^{(\frac{n-2}{n})b}}\Bigg)^{\frac{2p^\ast}{n}\big(1-\frac{\gamma}{n}\big)}\text{ATSC}(\ell,\gamma)>0.
\end{align}
In virtue of \eqref{eq7.6} and \eqref{eq7.7}, one has
$$  \liminf_{\ell\to\beta(n,2)^-}\Bigg(\cfrac{1-\big(\frac{\ell}{\beta(n,2)}\big)^{(\frac{n-2}{n})a}}{\big(\frac{\ell}{\beta(n,2)}\big)^{(\frac{n-2}{n})b}}\Bigg)^{\frac{p^\ast}{b}\big(1-\frac{\gamma}{n}\big)}\Bigg(\cfrac{\big(\frac{\ell}{\beta(n,2)}\big)^{(\frac{n-2}{n})b}}{1-\big(\frac{\ell}{\beta(n,2)}\big)^{(\frac{n-2}{n})a}}\Bigg)^{\frac{2p^\ast}{n}\big(1-\frac{\gamma}{n}\big)}<+\infty, $$
which is a contradiction, thanks to $b>\frac{n}{2}$ and thus, the claim is verified. This finishes the proof.
\end{proof}
\section{The compact embedding result: Proof of Theorem \ref{thm1.4}}\label{sec7}
This section is concerned with the attainability of the best constant in the embedding $E\hookrightarrow L^\varrho(\mathbb{R}^n,|x|^{-\gamma}\mathrm{d}x)$ for all $\varrho\geq p^\ast$ and $\gamma\in(0,n)$. For this purpose, we first need to prove Theorem \ref{thm1.4}.
\begin{proof}[\bf{Proof of Theorem \ref{thm1.4}}.]
First, we claim that the continuous embedding $E\hookrightarrow L^\varrho(\mathbb{R}^n,|x|^{-\gamma}\mathrm{d}x)$ holds for all $\varrho\geq p^\ast$ and $\gamma\in(0,n)$. Indeed, let $v\in E$ and $\eta,~\eta^\prime>1$ be such that $\frac{1}{\eta}+\frac{1}{\eta^\prime}=1$ satisfying $\gamma \eta^\prime<n$. By using the Hölder's inequality and Lemma \ref{lem2.5}, one has
\begin{align*}
   \int_{\mathbb{R}^n}\frac{|v|^\varrho}{|x|^\gamma}\,\mathrm{d}x &= \bigg(\int_{B_1}+\int_{B^c_1}\bigg)\frac{|v|^\varrho}{|x|^\gamma}\,\mathrm{d}x\leq \bigg(\int_{B_1}|v|^{\varrho\eta}\,\mathrm{d}x\bigg)^{\frac{1}{\eta}}\bigg(\int_{B_1}\frac{1}{|x|^{\gamma \eta^\prime}}\,\mathrm{d}x\bigg)^{\frac{1}{\eta^\prime}}+\int_{\mathbb{R}^n}|v|^\varrho\,\mathrm{d}x\lesssim\|v\|^\varrho,
\end{align*}
which concludes the claim. It remains to check the compactness of the above embedding. To prove this, take a bounded sequence $\{v_k\}_k\subset E$, then one can find a subsequence $\{v_{k_j}\}_j$ and $v$ in $E$ satisfying
\begin{align*}
  v_{k_j}\rightharpoonup v~\text{in} ~ E,~ v_{k_j}\to v~\text{in} ~ L^\theta(B_R)~~\text{for all}~ R>0~~\text{and}~~\theta\geq 1,~v_{k_j}\to v~\text{a.e. in}~\mathbb{R}^n~\text{as}~j\to\infty,
\end{align*}
we thank to Lemma \ref{lem2.5}. Now, thanks to the Egoroff's theorem, we can notice that for any open ball $B_R$ and $\delta>0$, there exists $A_\delta\subset B_R$ with $|A_\delta|<\delta$ satisfying $v_{k_j}\to v$ uniformly in $B_R\setminus A_\delta$ as $j\to\infty$. Further, let us write
\begin{equation}\label{eq4.1}
\begin{aligned}
\lim_{j\to\infty}\int_{\mathbb{R}^n}\frac{|v_{k_j}- v|^\varrho}{|x|^\gamma}\,\mathrm{d}x  &= \lim_{R\to\infty}\lim_{\delta\to 0}\lim_{j\to\infty}\int_{\mathbb{R}^n}\frac{|v_{k_j}- v|^\varrho}{|x|^\gamma}\,\mathrm{d}x\\ 
    &=\lim_{R\to\infty}\lim_{\delta\to 0}\lim_{j\to\infty}\int_{ A_\delta}\frac{|v_{k_j}- v|^\varrho}{|x|^\gamma}\,\mathrm{d}x+\lim_{R\to\infty}\lim_{\delta\to 0}\lim_{j\to\infty}\int_{B_R\setminus A_\delta}\frac{|v_{k_j}- v|^\varrho}{|x|^\gamma}\,\mathrm{d}x\\ &\quad+\lim_{R\to\infty}\lim_{\delta\to 0}\lim_{j\to\infty}\int_{\mathbb{R}^n\setminus B_R}\frac{|v_{k_j}- v|^\varrho}{|x|^\gamma}\,\mathrm{d}x:=J_1+J_2+J_3.
\end{aligned}
\end{equation}
Suppose that $1<t<\frac{n}{\gamma}$ and $1<r<\frac{n}{t\gamma}$ such that $\frac{1}{\tau}+\frac{1}{\tau^\prime}=1$ for all $\tau\in\{t,r\}$. Hence, by using the Hölder's inequality and Lemma \ref{lem2.5}, we have
\begin{align*}
 \int_{ A_\delta}\frac{|v_{k_j}- v|^\varrho}{|x|^\gamma}\,\mathrm{d}x &\leq |A_\delta|^{\frac{1}{t^\prime}}\bigg(\int_{ A_\delta}\frac{|v_{k_j}- v|^{\varrho t}}{|x|^{t\gamma }}\,\mathrm{d}x \bigg)^{\frac{1}{t}}\leq|A_\delta|^{\frac{1}{t^\prime}}\bigg(\int_{ A_\delta}\frac{1}{|x|^{rt \gamma}}\,\mathrm{d}x\bigg)^{\frac{1}{rt}} \bigg(\int_{ A_\delta}|v_{k_j}- v|^{\varrho r^\prime t}\,\mathrm{d}x\bigg)^{\frac{1}{r^\prime t}} \\ 
 & \leq |A_\delta|^{\frac{1}{t^\prime}}\bigg(\int_{B_R}\frac{1}{|x|^{rt \gamma}}\,\mathrm{d}x\bigg)^{\frac{1}{rt}} \bigg(\int_{ \mathbb{R}^n}|v_{k_j}- v|^{\varrho r^\prime t}\,\mathrm{d}x\bigg)^{\frac{1}{r^\prime t}} \\ &
 \lesssim  |A_\delta|^{\frac{1}{t^\prime}}\bigg(\int_{B_R}\frac{1}{|x|^{rt \gamma}}\,\mathrm{d}x\bigg)^{\frac{1}{rt}} \sup_{j\in\mathbb{N}}\|v_{k_j}- v\|^\varrho\lesssim |A_\delta|^{\frac{1}{t^\prime}},
\end{align*}
which directly implies that
\begin{equation}\label{eq4.2}
    J_1=\lim_{R\to\infty}\lim_{\delta\to 0}\lim_{j\to\infty}\int_{ A_\delta}\frac{|v_{k_j}- v|^\varrho}{|x|^\gamma}\,\mathrm{d}x=0.
\end{equation}
Moreover, we also have
\begin{align*}
 \int_{B_R\setminus A_\delta}\frac{|v_{k_j}- v|^\varrho}{|x|^\gamma}\,\mathrm{d}x\leq \sup_{B_R\setminus A_\delta}|v_{k_j}- v|^\varrho\int_{B_R} \frac{1}{|x|^{ \gamma}}\,\mathrm{d}x\lesssim \sup_{B_R\setminus A_\delta}|v_{k_j}- v|^\varrho, 
\end{align*}
which together with the fact that $v_{k_j}\to v$ uniformly in $B_R\setminus A_\delta$ as $j\to\infty$ ensures that
\begin{align}\label{eq4.3}
  J_2=\lim_{R\to\infty}\lim_{\delta\to 0}\lim_{j\to\infty}\int_{B_R\setminus A_\delta}\frac{|v_{k_j}- v|^\varrho}{|x|^\gamma}\,\mathrm{d}x=0.  
\end{align}
Finally, one can observe that
\begin{align*}
  \int_{\mathbb{R}^n\setminus B_R}\frac{|v_{k_j}- v|^\varrho}{|x|^\gamma}\,\mathrm{d}x\leq \frac{1}{R^\gamma}  \int_{\mathbb{R}^n}|v_{k_j}- v|^\varrho\,\mathrm{d}x\lesssim \frac{1}{R^\gamma}\sup_{j\in\mathbb{N}}\|v_{k_j}- v\|^\varrho\lesssim \frac{1}{R^\gamma},
\end{align*}
which implies at once that
\begin{align}\label{eq4.4}
 J_3=\lim_{R\to\infty}\lim_{\delta\to 0}\lim_{j\to\infty}\int_{\mathbb{R}^n\setminus B_R}\frac{|v_{k_j}- v|^\varrho}{|x|^\gamma}\,\mathrm{d}x=0.   
\end{align}
In virtue of \eqref{eq4.2}, \eqref{eq4.3} and \eqref{eq4.4}, we obtain from \eqref{eq4.1} that $v_{k_j}\to v$ in $L^\varrho(\mathbb{R}^n,|x|^{-\gamma}\mathrm{d}x)$ as $j\to\infty$. This completes the proof.
\end{proof}
The following corollary is an immediate consequence of the above theorem.
\begin{corollary}
 The best constant $S_\varrho$ in the embedding $E\hookrightarrow L^\varrho(\mathbb{R}^n,|x|^{-\gamma}\mathrm{d}x)$ for  all $\varrho\geq p^\ast$ and $\gamma\in(0,n)$ is achieved, where $S_\varrho$ is given by
$$ S_\varrho:=\inf_{u\in E\setminus\{0\}}\frac{\|u\|}{~\|u\|_{\varrho,\gamma}}. $$   
\end{corollary}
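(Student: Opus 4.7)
The plan is to carry out the direct method of the calculus of variations, using Theorem \ref{thm1.4} as the key compactness ingredient. First, I would select a minimizing sequence $\{u_k\}_k \subset E \setminus \{0\}$ for the infimum defining $S_\varrho$. By the homogeneity of the quotient $\|u\|/\|u\|_{\varrho,\gamma}$, I may normalize so that $\|u_k\|_{\varrho,\gamma} = 1$ for every $k$, which forces $\|u_k\| \to S_\varrho$. In particular, $\{u_k\}_k$ is bounded in $E$.

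Since $(E, \|\cdot\|)$ is a reflexive Banach space, I would extract (up to a not relabelled subsequence) a weak limit $u_k \rightharpoonup u$ in $E$. Now I would invoke Theorem \ref{thm1.4}: the embedding $E \hookrightarrow L^\varrho(\mathbb{R}^n, |x|^{-\gamma}\,\mathrm{d}x)$ is compact for all $\varrho \geq p^\ast$ and $\gamma \in (0,n)$. Hence $u_k \to u$ strongly in $L^\varrho(\mathbb{R}^n, |x|^{-\gamma}\,\mathrm{d}x)$, so
\[
\|u\|_{\varrho,\gamma} = \lim_{k \to \infty} \|u_k\|_{\varrho,\gamma} = 1.
\]
In particular $u \neq 0$, which is essential to ensure that $u$ is an admissible competitor.

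Next, since $\|\cdot\|$ is a norm on the reflexive Banach space $E$, it is convex and continuous, hence weakly lower semicontinuous. Therefore
\[
\|u\| \leq \liminf_{k \to \infty} \|u_k\| = S_\varrho.
\]
On the other hand, by the very definition of $S_\varrho$ and the fact that $\|u\|_{\varrho,\gamma} = 1$, we have $\|u\| \geq S_\varrho \|u\|_{\varrho,\gamma} = S_\varrho$. Combining these two inequalities gives $\|u\| = S_\varrho$, so $u$ attains the infimum.

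This argument is essentially routine once Theorem \ref{thm1.4} is available. The only delicate point, and the reason Theorem \ref{thm1.4} is needed rather than just the continuous embedding, is the non-triviality of the weak limit $u$: without compactness, the minimizing sequence could concentrate or escape to infinity, forcing $u \equiv 0$ and breaking the normalization. Compactness in $L^\varrho(\mathbb{R}^n, |x|^{-\gamma}\,\mathrm{d}x)$ rules this out and converts the $L^\varrho$-norm into a continuous (not merely lower semicontinuous) functional along the sequence, which is the single step where the singular weight and the restriction $\varrho \geq p^\ast$ are actually used.
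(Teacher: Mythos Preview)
Your proof is correct and follows essentially the same argument as the paper: normalize a minimizing sequence in $L^\varrho(\mathbb{R}^n,|x|^{-\gamma}\,\mathrm{d}x)$, use reflexivity of $E$ to extract a weak limit, apply Theorem \ref{thm1.4} to get strong convergence of the $L^\varrho$-norm (hence $u\neq 0$), and conclude via weak lower semicontinuity of $\|\cdot\|$ together with the definition of $S_\varrho$. Your additional remarks on why compactness (rather than mere continuity) of the embedding is the crucial input are accurate and add helpful context.
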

\begin{proof}
 Indeed, let $\{v_k\}_k\subset E$ be a sequence such that $\|v_k\|_{\varrho,\gamma}=1~\text{and}~\|v_k\|\to S_\varrho~\text{ as}~ k\to\infty$. It follows that $\{v_k\}_k\subset E$ is bounded. Thus, up to a subsequence, still denoted by itself and $v\in E$ such that 
$$ v_{k}\rightharpoonup v~\text{in} ~ E,~ v_{k}\to v~\text{in} ~ L^\varrho(\mathbb{R}^n,|x|^{-\gamma}\mathrm{d}x)~~\text{for all}~~\varrho\geq p^\ast~\text{and}~v_{k}\to v~\text{a.e. in}~\mathbb{R}^n~\text{as}~k\to\infty,$$ we thank to Theorem \ref{thm1.4}. This infers that $\|v\|_{\varrho,\gamma}=\lim_{k\to\infty}\|v_k\|_{\varrho,\gamma}=1$. Moreover, due to the weak lower semicontinuity of the norm, we have $\|v\|\leq \liminf_{k\to\infty} \|v_k\|=S_\varrho\leq \|v\|$. It follows that the best constant $S_\varrho$ is achieved. This finishes the proof. 
\end{proof}
\section{Existence of nontrivial weak solutions: Proof of Theorem \ref{thm1.5}}\label{sec8}
In this section, we shall use the mountain pass theorem, standard topological tools, singular Adams' inequality, a new concentration-compactness principle due to Lions (see, for example, Theorem \ref{thm1.1} and Theorem \ref{thm1.2}) and Theorem \ref{thm1.4} to prove Theorem \ref{thm1.5}. Evidently, the problem \eqref{main problem} has a variational structure. Indeed, since we are interested in studying the problem \eqref{main problem}, we define the Euler–Lagrange variational energy functional $J \colon E\to \mathbb{R}$ by
\begin{align}
   J(u):=\frac{1}{p}\|\Delta u\|^p_p+\frac{2}{n}\|\Delta u\|^{\frac{n}{2}}_{\frac{n}{2}}-\int_{\mathbb{R}^n}\frac{G(x,u)}{|x|^\gamma}\,\mathrm{d}x,~\forall~ u\in E.  
\end{align}
It is easy to see that $J$ is well-defined, of class $C^1(E,\mathbb{R})$ and its Gâteaux derivative is given by 
\begin{align*}
    \langle{J^\prime(u),v}\rangle:=\int_{\mathbb{R}^n} |\Delta u|^{p-2}\Delta u\Delta v\,\mathrm{d}x+\int_{\mathbb{R}^n} |\Delta u|^{\frac{n}{2}-2}\Delta u\Delta v\,\mathrm{d}x-\int_{\mathbb{R}^n}\frac{g(x,u)v}{|x|^\gamma}\,\mathrm{d}x,~\forall~u,v\in E.
\end{align*}
Moreover, the critical points of $J$ are exactly weak solutions of the problem \eqref{main problem}. To prove our existence results, we need a version of the mountain pass theorem of Rabinowitz \cite{Rabinowitz-1986}, as stated below.
\begin{definition}(\text{Palais-Smale compactness condition})
 Let $X$ be a Banach space and $\mathcal{I}:X\to\mathbb{R}$ be a functional of class $C^1(X,\mathbb{R})$. We say that  $\mathcal{I}$ satisfies the Palais-Smale compactness condition at a suitable level $c\in \mathbb{R}$, if for any sequence $\{u_k\}_k\subset X$ such that
 \begin{equation}\label{eq5.1}
  \mathcal{I} (u_k)\to c\quad\text{and}\quad\sup_{\|\varphi\|_{X}=1}|\langle{\mathcal{I}^\prime(u_k),\varphi}\rangle|\to 0\quad\text{as}\quad k\to\infty   
 \end{equation}
has a strongly convergent subsequence in  $X$. Note that the sequence $\{u_k\}_k\subset X$ satisfying \eqref{eq5.1} is known as a Palais-Smale sequence at level $c\in \mathbb{R}$, which is denoted by the symbol $(PS)_c$.
\end{definition}
\begin{theorem}(\text{The mountain pass theorem})\label{thm5.2}
Let $X$ be a real Banach space and $\mathcal{I}\in C^1(X,\mathbb{R})$. Suppose $\mathcal{I}(0)=0$ and there hold:
\begin{itemize}
    \item[\textnormal{(a)}] There exist two constants $\alpha,\rho>0$ such that $\mathcal{I}(u)\geq \alpha$ for all $u\in X$ with $\|u\|=\rho$;
     \item[\textnormal{(b)}] There exists $e\in X$ satisfying $\|e\|>\rho$ such that $\mathcal{I}(e)<0$.  
\end{itemize}
Define $$\Gamma:=\big\{\gamma\in C^1([0,1],X)\colon \gamma(0)=0,~\gamma(1)=e\big\}. $$
Then $$c:=\inf_{\gamma\in \Gamma}\max_{t\in[0,1]} \mathcal{I}(\gamma(t))\geq \alpha $$
and there exists a $(PS)_c$ sequence $\{u_k\}_k$ for $\mathcal{I}$ in $X$. Consequently, if $\mathcal{I}$ satisfies the Palais-Smale condition, then $\mathcal{I}$ possesses a critical value $c\geq\alpha$.   
\end{theorem}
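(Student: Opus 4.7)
The plan is to follow the classical Ambrosetti--Rabinowitz argument, which has two parts: first establishing the inequality $c\geq \alpha$ by an elementary topological observation, and then producing a $(PS)_c$ sequence by contradiction via a quantitative deformation lemma built from a pseudo-gradient vector field.

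For the lower bound, fix any $\gamma\in\Gamma$. Since $\gamma(0)=0$ and $\|e\|>\rho$, the continuous real-valued map $t\mapsto \|\gamma(t)\|$ takes the value $0$ at $t=0$ and exceeds $\rho$ at $t=1$, so by the intermediate value theorem there is $t_\gamma\in(0,1)$ with $\|\gamma(t_\gamma)\|=\rho$. Hypothesis $(a)$ forces $I(\gamma(t_\gamma))\geq \alpha$, hence $\max_{t\in[0,1]}I(\gamma(t))\geq \alpha$, and taking the infimum over $\Gamma$ yields $c\geq \alpha$.

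The core step is to produce a Palais--Smale sequence at level $c$. Argue by contradiction: suppose no such sequence exists. Then there exist $\bar\varepsilon,\delta>0$, with $\bar\varepsilon$ small enough that $c-2\bar\varepsilon>0$, such that $\|I'(u)\|_{X^*}\geq \delta$ on $A:=I^{-1}([c-2\bar\varepsilon,\,c+2\bar\varepsilon])$. Since $X$ is merely a Banach space and $I\in C^1$, invoke the Palais pseudo-gradient construction to obtain a locally Lipschitz field $V$ on $\{u\in X:I'(u)\neq 0\}$ satisfying $\|V(u)\|\leq 1$ and $\langle I'(u),V(u)\rangle\geq \tfrac{1}{2}\|I'(u)\|_{X^*}$. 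Choose a locally Lipschitz cutoff $\psi:X\to [0,1]$ equal to $1$ on $I^{-1}([c-\bar\varepsilon,c+\bar\varepsilon])$ and vanishing outside $A$, and define $W:=-\psi V$, extended by $0$ where $V$ is not defined; $W$ is globally defined, bounded and locally Lipschitz. Let $\eta(s,u)$ be the solution of $\tfrac{d}{ds}\eta=W(\eta)$, $\eta(0,u)=u$, which is a global flow by boundedness of $W$. Along any trajectory that stays in $I^{-1}([c-\bar\varepsilon,c+\bar\varepsilon])$ on $[0,T]$,
\begin{equation*}
\tfrac{d}{ds}I(\eta(s,u))=\langle I'(\eta),W(\eta)\rangle\leq -\tfrac{1}{2}\delta,
\end{equation*}
so choosing $T:=4\bar\varepsilon/\delta$ forces the trajectory to exit $I\geq c-\bar\varepsilon$, yielding $I(\eta(T,u))\leq c-\bar\varepsilon$ whenever $I(u)\leq c+\bar\varepsilon$.

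Now pick $\gamma_0\in\Gamma$ with $\max_{t\in[0,1]} I(\gamma_0(t))\leq c+\bar\varepsilon$, which exists by definition of $c$. Since $I(0)=0<c-\bar\varepsilon$ and $I(e)<0<c-\bar\varepsilon$, both endpoints lie outside $A$ (shrinking $\bar\varepsilon$ if needed), so $W$ vanishes there and $\eta(T,0)=0$, $\eta(T,e)=e$. Thus $\widetilde\gamma(t):=\eta(T,\gamma_0(t))$ belongs to $\Gamma$ and satisfies $\max_{t\in[0,1]}I(\widetilde\gamma(t))\leq c-\bar\varepsilon<c$, contradicting the definition of $c$ as an infimum. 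Therefore a $(PS)_c$ sequence $\{u_k\}_k$ must exist. Finally, if $I$ satisfies the $(PS)_c$ condition, a strongly convergent subsequence $u_{k_j}\to u^\ast$ yields $I(u^\ast)=c\geq\alpha$ and $I'(u^\ast)=0$ by continuity of $I$ and $I'$, producing the desired critical point at level $c$.

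The main obstacle is the construction of the deformation $\eta$: because $X$ is only a Banach space, the gradient of $I$ need not exist, so one must rely on the Palais pseudo-gradient field, and the cutoff $\psi$ has to be engineered carefully so that the flow strictly decreases $I$ on the strip near level $c$ while leaving the endpoints $0$ and $e$ fixed, ensuring the deformed path remains admissible in $\Gamma$.
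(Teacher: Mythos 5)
The paper states this theorem as a quoted classical result of Ambrosetti--Rabinowitz (citing Rabinowitz \cite{MR845785}) and gives no proof of its own, so there is no "paper's proof" to compare against. Your argument is a correct reproduction of the standard quantitative-deformation proof: the lower bound $c\geq\alpha$ follows from the intermediate value theorem applied to $t\mapsto\|\gamma(t)\|$ (here you have silently corrected the paper's typo $\gamma(0)=1$ to $\gamma(0)=0$), the negation of the existence of a $(PS)_c$ sequence correctly produces the thresholds $\bar\varepsilon,\delta$ with $\|I'\|_{X^*}\geq\delta$ on the strip $I^{-1}([c-2\bar\varepsilon,c+2\bar\varepsilon])$, and the cutoff-plus-normalized-pseudo-gradient flow decreases $I$ at rate at least $\delta/2$ there, yielding the deformation in time $T=4\bar\varepsilon/\delta$; because $I(0)=0$ and $I(e)<0$ both lie strictly below $c-2\bar\varepsilon$, the endpoints are fixed and the deformed path remains admissible, giving the contradiction. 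Two small cosmetic remarks: the clause "shrinking $\bar\varepsilon$ if needed" at the end is redundant since you had already imposed $c-2\bar\varepsilon>0$; and for the extension of $W$ by zero to be locally Lipschitz one relies on the fact that $A$ is closed and contained in the open set $\{I'\neq 0\}$, so that any point where $V$ is undefined admits a neighborhood where $\psi\equiv 0$ — you implicitly use this but it is worth spelling out. With that said, the proof is complete and sound, and it is essentially the canonical argument one finds in Rabinowitz or Willem.
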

To use Theorem \ref{thm5.2}, we first prove the mountain pass geometry of the energy functional $J$ as follows.
\begin{lemma}\label{lem5.3}
  Let the hypotheses \ref{g1}--\ref{g4} be satisfied. Then the following results are true:
    \begin{itemize}
        \item[\textnormal{(i)}] There exist $\delta,\rho>0$ such that $J(u)\geq \delta$ for all $u\in E$ and $\|u\|=\rho$;
        \item[\textnormal{(ii)}] There exists $e\in E$ with $\|e\|>\rho$ such that $J(e)<0$.
    \end{itemize}
\end{lemma}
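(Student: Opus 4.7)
My plan is to treat the two geometric conditions separately, with part (i) requiring the delicate interplay between the sharp singular Adams' inequality (Theorem \ref{thm1.1}) and the compact embedding (Theorem \ref{thm1.4}), while part (ii) will follow from the coercivity lower bound $(g_6)$.

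For part (i), I would start from the pointwise growth estimate \eqref{eq1.2} and split
\begin{equation*}
\int_{\mathbb{R}^n}\frac{G(x,u)}{|x|^\gamma}\,\mathrm{d}x\leq \zeta\,\|u\|_{\tau,\gamma}^{\tau}+D_\zeta\int_{\mathbb{R}^n}\frac{|u|^{q}\Phi_{\alpha,j_0}(u)}{|x|^\gamma}\,\mathrm{d}x .
\end{equation*}
To the second integral I would apply H\"older's inequality with conjugate exponents $t,t'>1$ (to be fixed), then use Corollary \ref{cor2.6} to majorize $\Phi_{\alpha,j_0}(u)^{t'}\leq \Phi_{\alpha t',j_0}(u)$. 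Writing $u=\|u\|\,(u/\|u\|)$ and insisting that $\|u\|=\rho$ be small enough so that $\alpha t'\rho^{n/(n-2)}\leq \beta_{\gamma,n}$, Theorem \ref{thm1.1} applied to $u/\|u\|$ bounds the exponential factor by a constant. The remaining $L^{qt}$-factor together with the $\zeta$-term is controlled by the continuous embedding $E\hookrightarrow L^\varrho(\mathbb{R}^n,|x|^{-\gamma}\mathrm{d}x)$ for $\varrho\geq p^\ast$ (here I would need $qt,\tau\geq p^\ast$, which is automatic since $q,\tau>\max\{p^\ast,n/2\}$ and $t$ can be taken close to $1$). This yields
\begin{equation*}
\int_{\mathbb{R}^n}\frac{G(x,u)}{|x|^\gamma}\,\mathrm{d}x\leq C_1\|u\|^{\tau}+C_2\|u\|^{q},
\end{equation*}
with both exponents strictly greater than $n/2$.

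Next I would bound the principal part from below. Since $\|u\|^{n/2}=\|\Delta u\|_{n/2}^{n/2}+\|\Delta u\|_p^{n/2}$, at least one of the two summands is $\geq \|u\|^{n/2}/2$. In the first case one gets $\tfrac{2}{n}\|\Delta u\|_{n/2}^{n/2}\geq \tfrac{1}{n}\|u\|^{n/2}$; in the second, $\|\Delta u\|_p\geq 2^{-2/n}\|u\|$ and hence $\tfrac{1}{p}\|\Delta u\|_p^p\geq C\|u\|^p$. Because $p<n/2$ and $\|u\|=\rho\leq 1$, the minimum of the two lower bounds is proportional to $\rho^{n/2}$, so
\begin{equation*}
\tfrac{1}{p}\|\Delta u\|_p^p+\tfrac{2}{n}\|\Delta u\|_{n/2}^{n/2}\geq C_0\rho^{n/2}.
\end{equation*}
Combining, $J(u)\geq C_0\rho^{n/2}-C_1\rho^{\tau}-C_2\rho^{q}$; since $\tau,q>n/2$, the nonlinear terms are $o(\rho^{n/2})$ and I can shrink $\rho$ further so that $J(u)\geq \delta:=C_0\rho^{n/2}/2>0$ whenever $\|u\|=\rho$, proving (i).

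For part (ii), I fix any $u_0\in C^\infty_0(\mathbb{R}^n\setminus\{0\})$ with $u_0\not\equiv 0$, so that $\int_{\mathbb{R}^n}|u_0|^\vartheta |x|^{-\gamma}\mathrm{d}x$ is a finite positive number. Hypothesis $(g_6)$ yields $G(x,s)\geq \lambda|s|^\vartheta$, whence
\begin{equation*}
J(tu_0)\leq \frac{t^{p}}{p}\|\Delta u_0\|_p^{p}+\frac{2t^{n/2}}{n}\|\Delta u_0\|_{n/2}^{n/2}-\lambda t^{\vartheta}\int_{\mathbb{R}^n}\frac{|u_0|^\vartheta}{|x|^\gamma}\,\mathrm{d}x,\qquad t>0.
\end{equation*}
Since $\vartheta>\max\{p^\ast,n/2\}>\max\{p,n/2\}$, the negative term dominates as $t\to+\infty$ and $J(tu_0)\to -\infty$. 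I then take $t$ so large that simultaneously $\|t u_0\|>\rho$ and $J(t u_0)<0$, and set $e:=tu_0$.

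The main obstacle is the argument in (i): ensuring that both the exponential integral stays bounded (which forces $\|u\|$ to be small enough for Theorem \ref{thm1.1} to apply after H\"older) and the embedding exponent $qt$ remains at least $p^\ast$ (which requires $t$ to be chosen close to $1$) must be balanced, and this is the one step that genuinely uses the sharp singular Adams' inequality together with the singular-weight compact embedding.
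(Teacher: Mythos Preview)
Your treatment of part (i) is essentially the paper's: bound $G$ via \eqref{eq1.2}, apply H\"older with exponents $t,t'$, use Corollary \ref{cor2.6} and Theorem \ref{thm1.1} to make the exponential factor a constant once $\alpha t'\|u\|^{n/(n-2)}\leq\beta_{\gamma,n}$, and finish with the embedding of Theorem \ref{thm1.4}. The paper gets the lower bound on the principal part more directly---for $\|u\|=\rho\leq 1$ one has $\|\Delta u\|_p\leq 1$, hence $\tfrac{1}{p}\|\Delta u\|_p^p\geq \tfrac{2}{n}\|\Delta u\|_p^{n/2}$ and so $\tfrac{1}{p}\|\Delta u\|_p^p+\tfrac{2}{n}\|\Delta u\|_{n/2}^{n/2}\geq \tfrac{2}{n}\|u\|^{n/2}$---but your case split reaches the same conclusion.

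There is, however, a genuine gap in part (ii): the lemma is stated under $(g_1)$--$(g_4)$ only, and you invoke $(g_6)$, which is not among the hypotheses. The paper's argument stays within $(g_4)$: from $0<\mu G(x,s)\leq g(x,s)s$ one checks that $s\mapsto s^{-\mu}G(x,su)$ is nondecreasing on $[1,\infty)$ for $u>0$, whence $G(x,su)\geq s^\mu G(x,u)$ for all $s\geq 1$. Taking any $u\in E\setminus\{0\}$ with $u\geq 0$ and $\|u\|=1$, $(g_4)$ also guarantees $\int_{\mathbb{R}^n}G(x,u)|x|^{-\gamma}\,\mathrm{d}x>0$, and then
\[
J(su)\leq \tfrac{2}{p}\,s^{n/2}-s^\mu\int_{\mathbb{R}^n}\frac{G(x,u)}{|x|^\gamma}\,\mathrm{d}x\longrightarrow -\infty\quad(s\to\infty),
\]
since $\mu>n/2$. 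Your $(g_6)$-based argument is correct when $(g_6)$ is available, but it does not prove the lemma as stated; replace it with the Ambrosetti--Rabinowitz scaling above.
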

\begin{proof}
    Let $t,t^\prime>1$ be such that $\frac{1}{t}+\frac{1}{t^\prime}=1$. Moreover, we assume $\kappa>0$ such that $\alpha t^\prime\kappa^{\frac{n}{n-2}}\leq\beta_{\gamma,n}$ whenever $\|u\|\leq\kappa$. From \eqref{eq1.2}, we obtain by using the Hölder's inequality, Theorem \ref{thm1.4} and Corollary \ref{cor2.6} that
    \begin{align*}
      \int_{\mathbb{R}^n}\frac{G(x,u)}{|x|^\gamma}\,\mathrm{d}x &\leq \zeta \|u\|^\tau_{\tau,\gamma}+D_\zeta \|u\|^q_{qt,\gamma}\bigg( \int_{\mathbb{R}^n}\frac{\Phi_{\alpha t^\prime,j_0}(u)}{|x|^\gamma}\,\mathrm{d}x \bigg)^{\frac{1}{t^\prime}}\\
      &\leq \zeta S^{-\tau}_\tau\|u\|^\tau+D_\zeta S^{-q}_{qt}\|u\|^q\bigg( \int_{\mathbb{R}^n}\frac{\Phi_{\beta_{\gamma,n},j_0}(u/\|u\|)}{|x|^\gamma}\,\mathrm{d}x \bigg)^{\frac{1}{t^\prime}}\\
      &\leq C(\zeta \|u\|^\tau+\|u\|^q),~\forall~\zeta>0,
    \end{align*}
where we have used Theorem \ref{thm1.1} to obtain the constant $C>0$, which is given by
\begin{align*}
    C:=\max\Bigg\{S^{-\tau}_\tau,D_\zeta S^{-q}_{qt}\bigg( \int_{\mathbb{R}^n}\frac{\Phi_{\beta_{\gamma,n},j_0}(u/\|u\|)}{|x|^\gamma}\,\mathrm{d}x \bigg)^{\frac{1}{t^\prime}}\Bigg\}<+\infty.
\end{align*}
Define $\rho=\min\{1,\kappa\}$. Hence, for all $u\in E$ with $\|u\|=\rho$, we have
\begin{align*}
    J(u)\geq \|u\|^{\frac{n}{2}}\bigg[\frac{2}{n}- C\big(\zeta \|u\|^{\tau-\frac{n}{2}}+\|u\|^{q-\frac{n}{2}}\big)\bigg].
\end{align*}
Let us assume $0<\zeta<\frac{2}{nC}$. Notice that $\tau,q>\frac{n}{2}$, therefore we may choose $\rho>0$ small enough such that $$\frac{2}{n}- C\big(\zeta \rho^{\tau-\frac{n}{2}}+\rho^{q-\frac{n}{2}}\big)>0.$$
It follows that
\begin{align*}
  J(u)\geq \rho^{\frac{n}{2}}\bigg[\frac{2}{n}- C\big(\zeta \rho^{\tau-\frac{n}{2}}+\rho^{q-\frac{n}{2}}\big)\bigg] :=\delta>0\quad\text{for}\quad \|u\|=\rho.   
\end{align*}
This finishes the proof of the first part of Lemma \ref{lem5.3}. Next, for the proof of the second part, we assume that $u\in E\setminus\{0\}$ and $u\geq 0$ with $\|u\|=1$. Define the map $\Psi\colon [1,+\infty)\to\mathbb{R}$ by $\Psi(s):=s^{-\mu}G(x,su)-G(x,u)$ for all $s\geq 1$ and $u\in\mathbb{R}^+:=(0,+\infty)$. By using \ref{g4}, one can easily check that the map $\cdot\mapsto \Psi(\cdot)$ is an increasing function on $[1,+\infty)$ and thus, we have $G(x,su)\geq s^{\mu}G(x,u)$ for all $s\geq 1$ and $u\in\mathbb{R}^+$. Now, choosing $s\geq 1$ sufficiently large enough, one has
\begin{align*}
    J(su)\leq \frac{2}{p}s^{\frac{n}{2}}-s^\mu \int_{\mathbb{R}^n}\frac{G(x,u)}{|x|^\gamma}\,\mathrm{d}x\to-\infty\quad\text{as}\quad s\to\infty,
\end{align*}
we thank to $\mu>\frac{n}{2}$. The proof is now completed by choosing $e=su$ with $s$ sufficiently large. Hence, the lemma is well-established.
\end{proof}
Due to Lemma \ref{lem5.3} and Theorem \ref{thm5.2}, we can  define the mountain pass minimax level as follows
 $$c:=\inf_{\gamma\in \Gamma}\max_{t\in[0,1]} J(\gamma(t))\geq \rho>0,$$
 where $$\Gamma:=\{\gamma\in C^1([0,1],E)\colon \gamma(0)=0,~ J(\gamma(1))<0\}. $$
More precisely, we have the following crucial lemma, which describes the range of the mountain pass minimax level $c$.
\begin{lemma}\label{lem5.4}
There exists $\lambda_0>0$ sufficiently large enough such that if the hypothesis \ref{g6} holds for all $\lambda\geq \lambda_0$, then the mountain pass minimax level $c$ satisfies
\begin{align}\label{eq5.3}
0<c<c_0:=\min\Bigg\{\frac{1}{2^{\frac{n}{2}}}\bigg(\frac{2\mu-n}{n\mu}\bigg)^{\frac{n}{2p}}\bigg(\frac{\beta_{\gamma,n}}{\alpha_0}\bigg)^{\frac{n-2}{2}},\frac{1}{2^p}\bigg(\frac{2\mu-n}{n\mu}\bigg)\bigg(\frac{\beta_{\gamma,n}}{\alpha_0}\bigg)^{\frac{(n-2)p}{n}}\Bigg\}.  
\end{align}    
\end{lemma}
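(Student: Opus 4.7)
The strategy is to exhibit an explicit admissible path along which $J$ stays strictly below the threshold $c_0$, exploiting the polynomial lower bound $G(x,s)\geq \lambda|s|^{\vartheta}$ provided by $(g_6)$. Since $\vartheta>\max\{p^{\ast},n/2\}>p$, this lower bound beats both kinetic terms $\tfrac{1}{p}\|\Delta u\|_{p}^{p}$ and $\tfrac{2}{n}\|\Delta u\|_{n/2}^{n/2}$ along any fixed ray, and as $\lambda\to\infty$ the maximum of $J$ along that ray tends to $0$. Positivity $c>0$ is immediate from Lemma \ref{lem5.3}(i), so only the upper bound $c<c_{0}$ requires work.

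Fix once and for all a nontrivial nonnegative test function $u_{0}\in C_{0}^{\infty}(\mathbb{R}^{n})$, and set
\begin{equation*}
a:=\|\Delta u_{0}\|_{p}^{p},\qquad b:=\|\Delta u_{0}\|_{\frac{n}{2}}^{\frac{n}{2}},\qquad \kappa:=\|u_{0}\|_{\vartheta,\gamma}^{\vartheta},
\end{equation*}
all strictly positive and finite (finiteness of $\kappa$ uses $\gamma<n$ together with the smoothness and compact support of $u_{0}$). By $(g_{4})$ and the computation in the proof of Lemma \ref{lem5.3}(ii), $J(su_{0})\to -\infty$ as $s\to +\infty$, so we may pick $s_{0}>0$ with $J(s_{0}u_{0})<0$. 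Setting $e:=s_{0}u_{0}$ and $\gamma_{0}(t):=te$, we obtain $\gamma_{0}\in\Gamma$, and hence
\begin{equation*}
c\leq \max_{t\in[0,1]}J(\gamma_{0}(t))\leq \sup_{s\geq 0}J(su_{0})\leq \sup_{s\geq 0}\varphi(s),\qquad \varphi(s):=\frac{a}{p}s^{p}+\frac{2b}{n}s^{\frac{n}{2}}-\lambda\kappa\,s^{\vartheta},
\end{equation*}
where the last inequality used $(g_{6})$ to bound $\int_{\mathbb{R}^{n}}G(x,su_{0})|x|^{-\gamma}\,\mathrm{d}x$ from below.

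Splitting the negative term into two equal halves gives
\begin{equation*}
\varphi(s)\leq \Bigl(\frac{a}{p}s^{p}-\frac{\lambda\kappa}{2}s^{\vartheta}\Bigr)+\Bigl(\frac{2b}{n}s^{\frac{n}{2}}-\frac{\lambda\kappa}{2}s^{\vartheta}\Bigr),
\end{equation*}
and each summand is of the form $As^{\alpha}-Bs^{\vartheta}$ with $0<\alpha<\vartheta$. Its global maximum on $[0,\infty)$ is the elementary quantity
\begin{equation*}
\max_{s\geq 0}\bigl(As^{\alpha}-Bs^{\vartheta}\bigr)=\frac{\vartheta-\alpha}{\vartheta}\,A\Bigl(\frac{\alpha A}{\vartheta B}\Bigr)^{\alpha/(\vartheta-\alpha)}.
\end{equation*}
Applying this twice with $\alpha\in\{p,n/2\}$ and $B=\lambda\kappa/2$ yields constants $K_{1},K_{2}>0$ depending on $u_{0}$, $p$, $n$, and $\vartheta$ but not on $\lambda$, such that
\begin{equation*}
\sup_{s\geq 0}\varphi(s)\leq \frac{K_{1}}{\lambda^{p/(\vartheta-p)}}+\frac{K_{2}}{\lambda^{(n/2)/(\vartheta-n/2)}}.
\end{equation*}
Both exponents in the denominators are positive since $\vartheta>\max\{p,n/2\}$, so the right-hand side tends to $0$ as $\lambda\to\infty$. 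Consequently there exists $\lambda_{0}>0$ (depending on $u_{0}$, $n$, $p$, $\mu$, $\vartheta$, $\alpha_{0}$ and $\beta_{\gamma,n}$ through the fixed constant $c_{0}$) such that for every $\lambda\geq \lambda_{0}$ the sum above is strictly less than $c_{0}$, yielding $c<c_{0}$. The main technical point to monitor is the simultaneous presence of two kinetic terms of different homogeneity, one coming from the $p$-biharmonic and one from the $\tfrac{n}{2}$-biharmonic operator; the split-in-halves device sidesteps this by matching each kinetic term against a separate half of the $\lambda s^{\vartheta}$ barrier, so that each piece decays polynomially in $\lambda$ independently of the other.
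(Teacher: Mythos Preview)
Your proof is correct and follows the same overall strategy as the paper: fix a nonnegative test function, bound $J$ along the ray $s\mapsto su_{0}$ from above via $(g_{6})$, and show the resulting one-dimensional maximum tends to $0$ as $\lambda\to\infty$. The only technical difference is in handling the two inhomogeneous kinetic terms: the paper chooses a specific cut-off $\psi$ supported in $B_{1}$ and uses $s^{n/2}\leq s^{p}$ for $s\in[0,1]$ to collapse both terms into a single $Cs^{p}$, then maximizes $Cs^{p}-\lambda\kappa s^{\vartheta}$ once; you instead split $-\lambda\kappa s^{\vartheta}$ into halves and pair each half with one kinetic term, yielding two elementary maxima that each decay in $\lambda$. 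Your split-in-halves device is slightly more robust (it works for any compactly supported $u_{0}$ without having to confine the parameter to $[0,1]$), while the paper's version produces a more explicit constant; either route suffices since only $c\to 0$ as $\lambda\to\infty$ is needed.
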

\begin{proof}
Let $\psi\in C^\infty_0(\mathbb{R}^n,[0,1])$ be a cut-off function such that $\psi\equiv 1$ in $ B_{\frac{1}{2}}$, $\psi\equiv 0$ in $ B^c_1$, $|\nabla \psi(x)|\leq 2$ and $|\Delta \psi(x)|\leq 4$ for all $x\in\mathbb{R}^n$. By using \ref{g6}, for any $s\in[0,1]$, we have
\begin{align*}
J(s\psi)&\leq \frac{s^p}{p}\int_{B_1}|\Delta \psi|^p\,\mathrm{d}x+\frac{2s^{\frac{n}{2}}}{n}\int_{B_1}|\Delta \psi|^{\frac{n}{2}}\,\mathrm{d}x-\lambda s^\vartheta\int_{B_{\frac{1}{2}}}\frac{1}{|x|^\gamma}\,\mathrm{d}x\\
 &\leq \bigg(\frac{2^{2p}}{p}+\frac{2^{n+1}}{n}\bigg)\sigma_n s^p-\frac{\lambda n\sigma_n}{2^{n-\gamma}(n-\gamma)}s^\vartheta.
\end{align*}
Define the map $\pi\colon[0,1]\to E$ by $\pi(s)=s\psi$. Let $\lambda_1>0$ be such that for all $\lambda>\lambda_1$, we have
 \begin{align*}
        J(\psi)\leq \bigg(\frac{2^{2p}}{p}+\frac{2^{n+1}}{n}\bigg)\sigma_n-\frac{\lambda_1 n\sigma_n}{2^{n-\gamma}(n-\gamma)}<0.
    \end{align*}
It follows that $\pi\in \Gamma$. Moreover, by direct calculations, one has  
\begin{align*}
    c\leq \max_{s\in[0,1]} J(s\psi)&\leq \sigma_n\max_{s\geq0}\bigg\{\bigg(\frac{2^{2p}}{p}+\frac{2^{n+1}}{n}\bigg) s^p-\frac{\lambda n}{2^{n-\gamma}(n-\gamma)}s^\vartheta\bigg\}\\
    &=\sigma_n\bigg(\frac{\vartheta-p}{\vartheta}\bigg)\bigg(\frac{2^{2p}}{p}+\frac{2^{n+1}}{n}\bigg)^{\frac{\vartheta}{\vartheta-p}}\bigg(\frac{2^{n-\gamma}(n-\gamma)p}{\lambda n\vartheta}\bigg)^{\frac{p}{\vartheta-p}}\to 0\quad\text{as}\quad \lambda\to\infty.
\end{align*}
This indicates that there exists $\lambda_0>\lambda_1$ sufficiently large enough such that $c<c_0$ for all $\lambda\geq\lambda_0$, where $c_0$ is defined as in \eqref{eq5.3}. This completes the proof.
\end{proof}
\begin{lemma}\label{lem5.5}
    Suppose that $0<c<c_0$, where $c_0$ is defined as in Lemma \ref{lem5.4}. Then any $(PS)_c$ sequence $\{u_k\}_k\subset E$ for $J$ is bounded in $E$. Moreover, there holds
    \begin{align}\label{eq5.4}
        \limsup_{k\to\infty}\|u_k\|^{\frac{n}{n-2}}<\frac{\beta_{\gamma,n}}{\alpha_0}.
    \end{align}
\end{lemma}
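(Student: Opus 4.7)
The plan is to combine the Ambrosetti--Rabinowitz condition $(g_4)$ with the Palais--Smale information to produce dual bounds on the two pieces of $\|u_k\|^{n/2}$. I would first form the quantity $\mu J(u_k)-\langle J'(u_k),u_k\rangle$; expanding the definitions and using $(g_4)$ to drop the non-negative term $\int_{\mathbb{R}^n}|x|^{-\gamma}(g(x,u_k)u_k-\mu G(x,u_k))\,\mathrm{d}x$, one arrives at
\begin{equation*}
\frac{\mu-p}{p}\|\Delta u_k\|_p^{p}+\frac{2\mu-n}{n}\|\Delta u_k\|_{\frac{n}{2}}^{\frac{n}{2}}\leq \mu c+o(1)+o(\|u_k\|),
\end{equation*}
with both coefficients strictly positive because $\mu>\max\{p,\tfrac{n}{2}\}$. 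Feeding the crude consequences $\|\Delta u_k\|_p^{p}\lesssim 1+\|u_k\|$ and $\|\Delta u_k\|_{\frac{n}{2}}^{\frac{n}{2}}\lesssim 1+\|u_k\|$ into the identity $\|u_k\|^{n/2}=\|\Delta u_k\|_{\frac{n}{2}}^{\frac{n}{2}}+(\|\Delta u_k\|_p^{p})^{n/(2p)}$ yields $\|u_k\|^{n/2}\lesssim 1+\|u_k\|+(1+\|u_k\|)^{n/(2p)}$. Since $p>1$ forces $n/(2p)<n/2$, a leading-exponent comparison rules out $\|u_k\|\to\infty$, so boundedness is secured.

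With boundedness in hand, $o(\|u_k\|)=o(1)$ and the inequality cleans up to $\|\Delta u_k\|_{\frac{n}{2}}^{\frac{n}{2}}\leq \frac{n\mu c}{2\mu-n}+o(1)$ and $\|\Delta u_k\|_p^{p}\leq \frac{p\mu c}{\mu-p}+o(1)$. A short algebraic check shows $p\leq \tfrac{n}{2}\Longrightarrow \tfrac{p\mu}{\mu-p}\leq \tfrac{n\mu}{2\mu-n}$; setting $A:=\tfrac{2\mu-n}{n\mu}$, both norms therefore obey $\leq c/A+o(1)$, whence $\|\Delta u_k\|_p^{n/2}\leq (c/A)^{n/(2p)}+o(1)$. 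For the quantitative statement \eqref{eq5.4}, write $B:=\beta_{\gamma,n}/\alpha_0$ and invoke Lemma \ref{lem5.4} by using the two minimands of $c_0$ separately. The first, $c<2^{-n/2}A^{n/(2p)}B^{(n-2)/2}$, combined with $A\in(0,1)$ and $n/(2p)>1$ (so $A^{n/(2p)-1}\leq 1$), gives $c/A<2^{-n/2}B^{(n-2)/2}$ and hence $\limsup_k\|\Delta u_k\|_{n/2}^{n/2}<2^{-n/2}B^{(n-2)/2}$. The second, $c<2^{-p}AB^{(n-2)p/n}$, yields $\|\Delta u_k\|_p^{p}<2^{-p}B^{(n-2)p/n}+o(1)$, which upon raising to $n/(2p)$ becomes $\|\Delta u_k\|_p^{n/2}<2^{-n/2}B^{(n-2)/2}+o(1)$. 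Summing the two strict bounds and using $n\geq 4$ (so $2^{1-n/2}\leq \tfrac{1}{2}<1$),
\begin{equation*}
\limsup_{k\to\infty}\|u_k\|^{n/2}<2^{1-n/2}B^{(n-2)/2}<B^{(n-2)/2},
\end{equation*}
and taking the $2/(n-2)$ power produces \eqref{eq5.4}.

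The main obstacle will be the boundedness step: because $J$ is non-homogeneous (the two operators scale with different exponents $p$ and $n/2$), the AR manipulation unavoidably produces an $o(\|u_k\|)$ remainder rather than the clean $o(1)$, so it cannot be absorbed into the leading term by symmetry. The rescuing observation is that the $p$-piece enters $\|u_k\|^{n/2}$ with exponent $n/(2p)$, which thanks to $p>1$ is strictly smaller than $n/2$; this gap is exactly what the polynomial comparison requires. A secondary technical point is the upgrade of the $p$-norm bound from its natural coefficient $\tfrac{p\mu}{\mu-p}$ to the weaker $\tfrac{n\mu}{2\mu-n}=1/A$, which relies on $p\leq n/2$ and is what aligns the two estimates with the single constant $A$ appearing in the definition of $c_0$.
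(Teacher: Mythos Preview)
Your proof is correct and follows the same overall scheme as the paper: combine the Ambrosetti--Rabinowitz condition with the $(PS)_c$ information to control $\|\Delta u_k\|_p^{p}$ and $\|\Delta u_k\|_{n/2}^{n/2}$ separately, then reassemble. The differences are organizational. For boundedness, the paper argues by contradiction through three cases (both norms unbounded, only one unbounded), whereas you use the single exponent comparison $n/(2p)<n/2$ to rule out blow-up in one stroke; this is a genuine simplification. For the quantitative bound \eqref{eq5.4}, the paper splits according to $c\le 1$ versus $c\ge 1$ and applies one minimand of $c_0$ in each case, while you apply the two minimands simultaneously to the two pieces of $\|u_k\|^{n/2}$, exploiting $A<1$ and $n/(2p)>1$ to upgrade the first minimand. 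Both routes are valid and give the same conclusion; yours avoids the case split at the price of the extra observation $\tfrac{p\mu}{\mu-p}\le \tfrac{n\mu}{2\mu-n}$, which the paper sidesteps by simply discarding the sharper coefficient $\tfrac{\mu-p}{p}$ in favour of $\tfrac{2\mu-n}{n}$ from the outset.
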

\begin{proof}
 Let $0<c<c_0$ and $\{u_k\}_k\subset E$ be a $(PS)_c$ sequence for $J$. It follows that
\begin{align*}
        J(u_k)= c+o_k(1)\quad\text{and}\quad\bigg\langle{J^\prime(u_k),\frac{u_k}{\|u_k\|}}\bigg\rangle=o_k(1)\quad\text{as}\quad k\to\infty.
\end{align*}
By using \ref{g4} and the above fact, we have
\begin{align}\label{eq5.5}
        c+o_k(1)+o_k(1)\|u_k\|\geq \bigg(\frac{2}{n}-\frac{1}{\mu}\bigg)\Big(\|\Delta u_k\|_p^p+\|\Delta u_k\|_{\frac{n}{2}}^{\frac{n}{2}}\Big)\quad\text{as}\quad k\to\infty.
\end{align}
Indeed, if possible, let $\{u_k\}_k\subset E$ be unbounded in $E$. Thus, we have the following situations.

\textbf{Case-1:}
Suppose that $\|\Delta u_k\|_p\to \infty$ and $\|\Delta u_k\|_{\frac{n}{2}}\to\infty$ as $k\to\infty$. It follows from $1<p<\frac{n}{2}$ that 
\begin{align*}
 \|\Delta u_k\|_{\frac{n}{2}}^{\frac{n}{2}}\geq \|\Delta u_k\|_{\frac{n}{2}}^p>1\quad\text{for large}\quad k.   
\end{align*}
Moreover, we obtain from \eqref{eq1.99} and \eqref{eq5.5} that
\begin{align*}
  c+o_k(1)+o_k(1)\|u_k\|\geq 2^{1-p}\bigg(\frac{2}{n}-\frac{1}{\mu}\bigg)\|u_k\|^p \quad\text{as}\quad k\to\infty. 
\end{align*}
Dividing $\|u_k\|^p$ on the both the sides and passing $k\to\infty$, we get $0\geq 2^{1-p}\big(\frac{2}{n}-\frac{1}{\mu}\big)>0$, which is a contradiction.

\textbf{Case-2:}
Suppose that $\|\Delta u_k\|_p\to \infty$ as $k\to\infty$ and $\|\Delta u_k\|_{\frac{n}{2}}$ is bounded. It follows from \eqref{eq1.99} and \eqref{eq5.5} that
\begin{align*}
 c+o_k(1)+o_k(1)\|u_k\|_E\geq \bigg(\frac{2}{n}-\frac{1}{\mu}\bigg)\|\Delta u_k\|_p^p\quad\text{as}\quad k\to\infty.  
\end{align*}
Dividing $\|\Delta u_k\|_p^p$ on the both the sides and passing $k\to\infty$, we get $0\geq \big(\frac{2}{n}-\frac{1}{\mu}\big)>0$, which is again a contradiction.

\textbf{Case-3:}
Suppose that $\|\Delta u_k\|_{\frac{n}{2}}\to\infty$ as $k\to\infty$ and $\|\Delta u_k\|_p$ is bounded, then arguing as before in Case-$2$, we get a contradiction.

From the above three cases, we conclude that $\{u_k\}_k\subset E$ is a bounded sequence in $E$. Thus, by using \eqref{eq5.5}, one has the following estimates 
\begin{align}\label{eq5.6}
\limsup_{k\to\infty} \|\Delta u_k\|_{\frac{n}{2}}^{\frac{n}{2}}\leq \bigg(\frac{n\mu}{2\mu-n}\bigg)c\quad\text{and}\quad \limsup_{k\to\infty} \|\Delta u_k\|_p^{\frac{n}{2}}\leq \bigg(\bigg(\frac{n\mu}{2\mu-n}\bigg)c\bigg)^{\frac{n}{2p}}.   
\end{align}
Recalling the inequality $$(a+b)^\wp\leq a^\wp+b^\wp,~\forall~a,b\geq 0\quad\text{and}\quad\wp\in(0, 1],$$ we obtain from \eqref{eq5.6} that
\begin{align*}
   \limsup_{k\to\infty}\|u_k\|^{\frac{n}{n-2}}\leq \bigg(\bigg(\frac{n\mu}{2\mu-n}\bigg)c\bigg)^{\frac{2}{n-2}}+\bigg(\bigg(\frac{n\mu}{2\mu-n}\bigg)c\bigg)^{\frac{n}{(n-2)p}}.  
\end{align*}
Notice that $\frac{n\mu}{2\mu-n}>1$ and $\frac{2}{n-2}<\frac{n}{(n-2)p}$. Hence, it follows from the above inequality that
\begin{equation}\label{eq5.7}
\begin{aligned}
  \limsup_{k\to\infty}\|u_k\|^{\frac{n}{n-2}}&\leq \bigg(2\bigg(\frac{n\mu}{2\mu-n}\bigg)^{\frac{n}{2p}}\bigg)^{\frac{2}{n-2}} \Big(c^{\frac{2}{n-2}}+c^{\frac{n}{(n-2)p}}\Big)\\ & \leq
  \begin{cases}
      \bigg(2^{\frac{n}{2}}\Big(\frac{n\mu}{2\mu-n}\Big)^{\frac{n}{2p}} c\bigg)^{\frac{2}{n-2}}\quad\qquad\text{if}\quad c\in(0,1],\\
      \bigg(2^{\frac{n}{2}}\Big(\frac{n\mu}{2\mu-n}\Big)^{\frac{n}{2p}} c^{\frac{n}{2p}}\bigg)^{\frac{2}{n-2}} \quad\quad\text{if}\quad c\in[1,+\infty).
  \end{cases}
\end{aligned}  
\end{equation}
Based on \eqref{eq5.3} and \eqref{eq5.7}, we immediately conclude the proof.
\end{proof}
\begin{lemma}\label{lem5.6}
 Let the hypotheses \ref{g1}--\ref{g4} and \ref{g6} be satisfied. Let $\{u_k\}_k\subset E$ be an arbitrary $(PS)_c$ sequence for $J$ with $0<c<c_0$, where $c_0$ is defined as in Lemma \ref{lem5.4}. Then there exists a subsequence of $\{u_k\}_k$ still not relabelled and $u\in E$ such that $\Delta u_k\to \Delta u$ a.e. in $\mathbb{R}^n$ as $k\to\infty$. In addition, there holds
 \begin{align*}
     |\Delta u_k|^{t-2}\Delta u_k \rightharpoonup |\Delta u|^{t-2}\Delta u\quad\text{in}\quad L^{\frac{t}{t-1}}(\mathbb{R}^n)\quad\text{as}\quad k\to\infty\quad\text{for}\quad t\in\Big\{p,\frac{n}{2}\Big\}.
 \end{align*}
\end{lemma}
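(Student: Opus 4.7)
The plan is to extract a weakly convergent subsequence from the bounded $(PS)_c$ sequence, then obtain strong $L^{n/2}$ convergence of $\{\Delta u_k\}$ via a monotonicity trick, from which both conclusions follow. By Lemma~\ref{lem5.5}, $\{u_k\}_k$ is bounded in $E$ and satisfies the sharp bound $\limsup_k\|u_k\|^{n/(n-2)}<\beta_{\gamma,n}/\alpha_0$. Since $E$ is reflexive, up to a subsequence $u_k\rightharpoonup u$ in $E$, and by Theorem~\ref{thm1.4} together with Lemma~\ref{lem2.5} we also have $u_k\to u$ strongly in $L^\varrho(\mathbb{R}^n,|x|^{-\gamma}\mathrm{d}x)$ for every $\varrho\geq p^\ast$, $u_k\to u$ in $L^\theta_{\textit{loc}}(\mathbb{R}^n)$ for every $\theta\geq 1$, and $u_k\to u$ a.e.\ in $\mathbb{R}^n$.

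The decisive step is to prove that
\begin{align*}
\int_{\mathbb{R}^n}\frac{g(x,u_k)(u_k-u)}{|x|^\gamma}\,\mathrm{d}x\to 0\quad\text{as}\quad k\to\infty.
\end{align*}
Using the growth bound \eqref{eq1.1} and writing $|g(x,s)|\lesssim|s|^{\tau-1}+|s|^{q-1}\Phi_{\alpha,j_0}(s)$ with $\tau,q>\max\{p^\ast,n/2\}$ and $\alpha>\alpha_0$ free, the polynomial contribution vanishes by Hölder's inequality combined with the compact embedding of Theorem~\ref{thm1.4}. For the exponential contribution I would use a three-factor Hölder inequality that isolates $\Phi_{\alpha,j_0}(u_k)$ into an $L^s(\mathbb{R}^n,|x|^{-\gamma}\mathrm{d}x)$ factor with $s>1$ close to $1$. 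Corollary~\ref{cor2.6} yields $\Phi_{\alpha,j_0}(u_k)^s\leq\Phi_{\alpha s,j_0}(u_k)$, and Theorem~\ref{thm1.1} applied to $u_k/\|u_k\|$ provides a uniform bound on this factor, provided $\alpha s\,\|u_k\|^{n/(n-2)}\leq\beta_{\gamma,n}$ for all large $k$. This last inequality can be arranged by taking $\alpha>\alpha_0$ and $s>1$ sufficiently close to $\alpha_0$ and $1$ respectively, thanks precisely to the sharp estimate $\limsup_k\|u_k\|^{n/(n-2)}<\beta_{\gamma,n}/\alpha_0$ from Lemma~\ref{lem5.5}. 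The remaining $|u_k-u|$-factor is driven to zero by another invocation of Theorem~\ref{thm1.4}. This is the main obstacle, since it is exactly where the sub-threshold mountain-pass level $c<c_0$ from Lemma~\ref{lem5.4} enters.

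Define now $A:E\to E^\ast$ by $\langle A(w),v\rangle:=\int_{\mathbb{R}^n}|\Delta w|^{p-2}\Delta w\,\Delta v\,\mathrm{d}x+\int_{\mathbb{R}^n}|\Delta w|^{n/2-2}\Delta w\,\Delta v\,\mathrm{d}x$. The $(PS)_c$ condition gives $\langle J'(u_k),u_k-u\rangle\to 0$, so the previous step yields $\langle A(u_k),u_k-u\rangle\to 0$, while $u_k\rightharpoonup u$ in $E$ and $A(u)\in E^\ast$ give $\langle A(u),u_k-u\rangle\to 0$; hence $\langle A(u_k)-A(u),u_k-u\rangle\to 0$. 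Applying the classical vector inequality $(|a|^{t-2}a-|b|^{t-2}b)(a-b)\geq c_t|a-b|^t$ with $t=n/2\geq 2$ (the $p$-contribution being nonnegative by monotonicity), I conclude $\|\Delta u_k-\Delta u\|_{n/2}\to 0$, and extracting once more $\Delta u_k\to\Delta u$ a.e.\ in $\mathbb{R}^n$. Finally, for each $t\in\{p,n/2\}$ the sequence $\{|\Delta u_k|^{t-2}\Delta u_k\}$ is bounded in $L^{t/(t-1)}(\mathbb{R}^n)$ and converges a.e.\ to $|\Delta u|^{t-2}\Delta u$, so by the standard fact that a.e.\ convergence together with $L^q$-boundedness ($1<q<\infty$) implies weak convergence in $L^q$, the required weak convergence in $L^{t/(t-1)}(\mathbb{R}^n)$ follows.
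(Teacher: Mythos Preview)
Your proof is correct, and in fact it takes a more direct route than the paper's. The paper introduces a cutoff $\eta\in C_0^\infty(\mathbb{R}^n)$ supported in $B_{2R}$, tests $\langle J'(u_k)-J'(u),(u_k-u)\eta\rangle$, and must then dispose of the cross terms coming from $\Delta((u_k-u)\eta)=(\Delta u_k-\Delta u)\eta+2\nabla\eta\cdot\nabla(u_k-u)+(u_k-u)\Delta\eta$; this forces them to invoke the Gagliardo--Nirenberg inequality (Lemma~\ref{lem2.7}) to get $\nabla u_k\to\nabla u$ in $L^t_{\textit{loc}}$, and in the end they obtain only $\Delta u_k\to\Delta u$ in $L^{n/2}(B_R)$ for each $R$, from which the a.e.\ convergence follows by a diagonal argument. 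You instead test directly against $u_k-u\in E$, use that $A(u)\in E^\ast$ together with $u_k-u\rightharpoonup 0$ to kill $\langle A(u),u_k-u\rangle$, and arrive at $\langle A(u_k)-A(u),u_k-u\rangle\to 0$ globally; Simon's inequality for $t=n/2\ge 2$ (using $n\ge 4$) then gives global strong convergence $\|\Delta u_k-\Delta u\|_{n/2}\to 0$, which is stronger than what the lemma asks. Your treatment of the nonlinear term via Theorem~\ref{thm1.1} and Theorem~\ref{thm1.4} is the same mechanism the paper uses, only without the localization.

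It is worth noting that your argument is essentially the one the paper itself deploys later, in Case~2 of Proposition~\ref{prop5.7}, to finish the $(PS)_c$ analysis. So your proof of Lemma~\ref{lem5.6} already contains the core of that proposition: adding the $1<p<2$ version of Simon's inequality would immediately upgrade your conclusion to $u_k\to u$ strongly in $E$.
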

\begin{proof}
Let $0<c<c_0$ and $\{u_k\}_k\subset E$ be a $(PS)_c$ sequence for $J$. By using Lemma \ref{lem5.5}, $\{u_k\}_k$ is a bounded sequence in $E$ and \eqref{eq5.4} holds. Hence, we can assume without loss of generality, up to a subsequence, still not relabelled and $u\in E$ such that $u_k\rightharpoonup u$ in $E$ as $k\to\infty$. Now, by using Lemma \ref{lem2.5}, Theorem \ref{thm1.4} and Lemma \ref{lem2.7}, we have 
\begin{align}\label{eq5.10}
    \begin{cases}
        u_k\to u\quad\text{in}\quad L^\theta _{\textit{loc}}(\mathbb{R}^n)\quad\text{for all}\quad\theta\in [1,+\infty);\\
         u_k\to u\quad\text{in}\quad L^\varrho(\mathbb{R}^n,|x|^{-\gamma}\mathrm{d}x)\quad\text{for all}\quad\varrho\in [p^\ast,+\infty);\\
         \nabla u_k\to \nabla u \quad\text{in}\quad L^t _{\textit{loc}}(\mathbb{R}^n)\quad\text{for}\quad t\in\{p,\frac{n}{2}\};\\
         u_k\to u\quad\text{a.e.~~in}\quad \mathbb{R}^n\quad\text{as}\quad k\to\infty.
    \end{cases}
\end{align}
Pick $R>0$ and $\eta\in C^\infty_0(\mathbb{R}^n)$ such that $0\leq\eta\leq 1$ in $\mathbb{R}^n$, $\eta\equiv 1$ in $B_R$ and $\eta\equiv 0$ in $B^c_{2R}$. Moreover, using the fact that $u_k\rightharpoonup u$ in $E$ as $k\to\infty$ and $J^\prime(u_k)\to 0$ in $E^\ast$ as $k\to\infty$, one has
\begin{align}\label{eq5.11}
    \langle{J^\prime(u_k)-J^\prime(u),(u_k-u)\eta}\rangle=o_k(1)\quad\text{as}\quad k\to\infty.
\end{align}
Due to the convexity, we have
\begin{align*}
    \textit{P}^t_k(u_k,u):=(|\Delta u_k|^{t-2}\Delta u_k-|\Delta u|^{t-2}\Delta u)(\Delta u_k-\Delta u)\geq 0\quad\text{a.e. in}\quad \mathbb{R}^n
\end{align*}
for any $k\in\mathbb{N}$ and $t\in\{p,\frac{n}{2}\}$. In view of the well-known Simon's inequality with $n\geq 4$ (for instance, see \eqref{eq5.27}) and \eqref{eq5.11}, there exists $c_{\frac{n}{2}}>0$ such that as $k\to\infty$, we have
\begin{equation}
\begin{aligned} \label{eq5.12}		
c^{-1}_{\frac{n}{2}} \int_{B_R}& |\Delta u_k-\Delta u|^{\frac{n}{2}}\,\mathrm{d}x \\
&\leq    \int_{B_R} \textit{P}^{\frac{n}{2}}_k(u_k,u) \,\mathrm{d}x\leq \displaystyle\sum_{t\in\{p,\frac{n}{2}\}} \bigg(\int_{B_R}\textit{P}^t_k(u_k,u) \,\mathrm{d}x\bigg) \\
&\leq \displaystyle\sum_{t\in\{p,\frac{n}{2}\}} \bigg(\int_{\mathbb{R}^n}\textit{P}^t_k(u_k,u) \eta\,\mathrm{d}x\bigg)\\ 
&=o_k(1)+\int_{\mathbb{R}^n}\frac{(g(x,u_k)-g(x,u))(u_k-u)\eta}{|x|^\gamma}\,\mathrm{d}x  \\ 
& \quad -\sum_{t\in\{p,\frac{n}{2}\}} \Bigg(\int_{\mathbb{R}^n}\big(|\Delta u_k|^{t-2}\Delta u_k-|\Delta u|^{t-2}\Delta u)\Delta\eta ( u_k-u)\,\mathrm{d}x \Bigg)    \\ 
&\quad-2\sum_{t\in\{p,\frac{n}{2}\}} \Bigg(\int_{\mathbb{R}^n}\big(|\Delta u_k|^{t-2}\Delta u_k-|\Delta u|^{t-2}\Delta u)\nabla\eta \cdot\nabla( u_k-u)\,\mathrm{d}x\Bigg).
\end{aligned}
\end{equation}
By employing the Hölder's inequality and \eqref{eq5.10}, we obtain for $t\in\{p,\frac{n}{2}\}$ that
\begin{align*}
    \bigg|\int_{\mathbb{R}^n}\big(|\Delta u_k|^{t-2}\Delta u_k-&|\Delta u|^{t-2}\Delta u)\Delta\eta ( u_k-u)\,\mathrm{d}x \bigg|
		\\ & \leq \|\Delta\eta\|_\infty\big(\|\Delta u_k\|^{t-1}_t+\|\Delta u\|^{t-1}_t\big) \bigg(\int_{B_{2R}}|u_k-u|^t\,\mathrm{d}x\bigg)^{\frac{1}{t}}\\
        &\lesssim \bigg(\int_{B_{2R}}|u_k-u|^t\,\mathrm{d}x\bigg)^{\frac{1}{t}}\to 0\quad\text{as }k\to\infty.
\end{align*}
This yields for $t\in\{p,\frac{n}{2}\}$ that
\begin{align}\label{eq5.13}
  \lim_{k\to\infty} \int_{\mathbb{R}^n}\big(|\Delta u_k|^{t-2}\Delta u_k-|\Delta u|^{t-2}\Delta u)\Delta\eta ( u_k-u)\,\mathrm{d}x=0. 
\end{align}
Similarly, again by using the Hölder's inequality and \eqref{eq5.10}, we deduce for $t\in\{p,\frac{n}{2}\}$ that
\begin{align*}
    \bigg|\int_{\mathbb{R}^n}\big(|\Delta u_k|^{t-2}\Delta u_k-&|\Delta u|^{t-2}\Delta u)\nabla\eta \cdot\nabla( u_k-u)\,\mathrm{d}x\bigg|\\ &\leq  \|\nabla\eta\|_\infty\big(\|\Delta u_k\|^{t-1}_t+\|\Delta u\|^{t-1}_t\big)\bigg(\int_{B_{2R}}|\nabla( u_k-u)|^t\,\mathrm{d}x\bigg)^{\frac{1}{t}}\\
        & \lesssim \bigg(\int_{B_{2R}}|\nabla( u_k-u)|^t\,\mathrm{d}x\bigg)^{\frac{1}{t}}\to 0\quad\text{as }k\to\infty.
\end{align*}
It follows that for $t\in\{p,\frac{n}{2}\}$, we have
\begin{align}\label{eq5.14}
    \lim_{k\to\infty} \int_{\mathbb{R}^n}\big(|\Delta u_k|^{t-2}\Delta u_k-|\Delta u|^{t-2}\Delta u)\nabla\eta \cdot\nabla( u_k-u)\,\mathrm{d}x=0.
\end{align}
Define
\begin{align*}
    J_1:=\zeta\int_{B_{2R}}\frac{(|u_k|^{\tau-1}+|u|^{\tau-1})|u
    _k-u|}{|x|^\gamma}\,\mathrm{d}x
\end{align*}
and \begin{align*}
    J_2:=D_\zeta\int_{B_{2R}}\frac{\big(|u_k|^{q-1} \Phi_{\alpha,j_0}(u_k)+|u|^{q-1} \Phi_{\alpha,j_0}(u)\big)|u
    _k-u|}{|x|^\gamma}\,\mathrm{d}x.
\end{align*}
Let $1<s<\frac{n}{\gamma}$ and $r\geq p^\ast$ be such that $\frac{1}{\upsilon}+\frac{1}{\upsilon^\prime}=1$ for $\upsilon\in\{s,r\}$. Then by using the Hölder's inequality and \eqref{eq5.10}, we get
\begin{align*}
J_1&\leq \zeta\Bigg[\bigg(\int_{B_{2R}}\frac{|u_k|^{(\tau-1)r^\prime}}{|x|^\gamma}\,\mathrm{d}x\bigg)^{\frac{1}{r^\prime}}+\bigg(\int_{B_{2R}}\frac{|u|^{(\tau-1)r^\prime}}{|x|^\gamma}\,\mathrm{d}x\bigg)^{\frac{1}{r^\prime}}\Bigg]\bigg(\int_{B_{2R}}\frac{|u
_k-u|^r}{|x|^\gamma}\,\mathrm{d}x\bigg)^{\frac{1}{r}}\\
&\leq \zeta\Big(\|u_k\|^{\tau-1}_{L^{(\tau-1)r^\prime s^\prime}(B_{2R})}+\|u\|^{\tau-1}_{L^{(\tau-1)r^\prime s^\prime}(B_{2R})}\Big)\bigg(\int_{B_{2R}}\frac{1}{|x|^{\gamma s}}\,\mathrm{d}x\bigg)^{\frac{1}{r^\prime s}}\bigg(\int_{B_{2R}}\frac{|u
_k-u|^r}{|x|^\gamma}\,\mathrm{d}x\bigg)^{\frac{1}{r}}\\
& \lesssim \bigg(\int_{\mathbb{R}^n}\frac{|u
_k-u|^r}{|x|^\gamma}\,\mathrm{d}x\bigg)^{\frac{1}{r}}\to 0\quad\text{as}\quad k\to\infty.
\end{align*}
Due to \eqref{eq5.4}, passing if necessary to a subsequence, still denoted by itself, we can assume
\begin{align*}
    \sup_{k\in\mathbb{N}}\|u_k\|^{\frac{n}{n-2}}<\frac{\beta_{\gamma,n}}{\alpha_0}.
\end{align*}
Let $\eta,~\eta^\prime>1$ be such that $\frac{1}{\eta}+\frac{1}{\eta^\prime}=1$. From the above inequality, we first fix $m\in \big(\|u_k\|^{\frac{n}{n-2}},\frac{\beta_{\gamma,n}}{\alpha_0}\big)$, $\alpha>\alpha_0$ close to $\alpha_0$ and $\eta>1$ close to $1$ such that  $\alpha \eta m<\beta_{\gamma,n}$. It follows from the Hölder's inequality, Theorem \ref{thm1.1}, Corollary \ref{cor2.6},  Corollary \ref{cor3.1} and \eqref{eq5.10} that
\begin{align*}
J_2&\leq D_\zeta \Bigg[\bigg(\int_{\mathbb{R}^n}\frac{|u_k|^{(q-1)\eta^\prime}|u_k-u|^{\eta^\prime}}{|x|^{\gamma}}\,\mathrm{d}x\bigg)^{\frac{1}{\eta^\prime}}\bigg(\int_{\mathbb{R}^n}\frac{\Phi_{\alpha \eta,j_0}(u_k)}{|x|^\gamma}\,\mathrm{d}x\bigg)^{\frac{1}{\eta}}\\
&\quad\quad+\bigg(\int_{\mathbb{R}^n}\frac{|u|^{(q-1)\eta^\prime}|u_k-u|^{\eta^\prime}}{|x|^{\gamma}}\,\mathrm{d}x\bigg)^{\frac{1}{\eta^\prime}}\bigg(\int_{\mathbb{R}^n}\frac{\Phi_{\alpha \eta,j_0}(u)}{|x|^\gamma}\,\mathrm{d}x\bigg)^{\frac{1}{\eta}}\Bigg]\\
&\lesssim \Bigg[\bigg(\int_{\mathbb{R}^n}\frac{|u_k|^{q \eta^\prime}}{|x|^{\gamma}}\,\mathrm{d}x\bigg)^{\frac{q-1}{q \eta^\prime}} \bigg(\int_{\mathbb{R}^n}\frac{\Phi_{\beta_{\gamma,n},j_0}(u_k/\|u_k\|)}{|x|^\gamma}\,\mathrm{d}x\bigg)^{\frac{1}{\eta}}\\
&\quad\quad+\bigg(\int_{\mathbb{R}^n}\frac{|u|^{q \eta^\prime}}{|x|^{\gamma}}\,\mathrm{d}x\bigg)^{\frac{q-1}{q \eta^\prime}} \bigg(\int_{\mathbb{R}^n}\frac{\Phi_{\alpha \eta,j_0}(u)}{|x|^\gamma}\,\mathrm{d}x\bigg)^{\frac{1}{\eta}}\Bigg]\bigg(\int_{\mathbb{R}^n}\frac{|u_k-u|^{q \eta^\prime}}{|x|^{\gamma}}\,\mathrm{d}x\bigg)^{\frac{1}{q \eta^\prime}}\\
&\lesssim \bigg(\int_{\mathbb{R}^n}\frac{|u_k-u|^{q \eta^\prime}}{|x|^{\gamma}}\,\mathrm{d}x\bigg)^{\frac{1}{q \eta^\prime}}\to 0\quad\text{as}\quad k\to\infty.
\end{align*}
In addition, by using \eqref{eq1.1}, one sees that
\begin{align*}
 \bigg|\int_{\mathbb{R}^n}\cfrac{(g(x,u_k)-g(x,u))(u_k-u)\eta}{|x|^\gamma}\, \mathrm{d}x\bigg|&\leq \int_{B_{2R}}\cfrac{(|g(x,u_k)|+|g(x,u)|)|u_n-u|}{|x|^\gamma}\, \mathrm{d}x \\
 &\leq J_1+J_2\to 0\quad\text{as}\quad k\to\infty.   
\end{align*}
This shows that
\begin{equation}\label{eq5.15}
  \lim_{k\to\infty} \int_{\mathbb{R}^n}\cfrac{(g(x,u_k)-g(x,u))(u_k-u)\eta}{|x|^\gamma}\, \mathrm{d}x=0.  
\end{equation}
Letting $k\to\infty$ in \eqref{eq5.12} and using \eqref{eq5.13}, \eqref{eq5.14} and \eqref{eq5.15}, we deduce that $\Delta u_k\to \Delta u$ in $L^{\frac{n}{2}}(B_{R})$ as $k\to\infty$ for all $R>0$. Hence, we can assume that up to a subsequence still denoted by itself such that  $\Delta u_k\to \Delta u$ a.e. in $\mathbb{R}^n$ as $k\to\infty$. It follows that $|\Delta u_k|^{t-2}\Delta u_k\to |\Delta u|^{t-2}\Delta u$ a.e. in $\mathbb{R}^n$ as $k\to\infty$ for $t\in\{p,\frac{n}{2}\}$. Consequently, by using the fact that $\{|\Delta u_k|^{t-2}\Delta u_k\}_k$ is bounded in $L^{\frac{t}{t-1}}(\mathbb{R}^n)$, we conclude the proof.
\end{proof}
\begin{proposition}\label{prop5.7}
    Let the hypotheses \ref{g1}--\ref{g6} be satisfied. Then the functional $J$ satisfies $(PS)_c$ condition for any $0<c<c_0$, where $c_0$ is defined as in Lemma \ref{lem5.4}.
\end{proposition}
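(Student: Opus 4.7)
The plan is to deduce strong convergence in $E$ from the preparatory information in Lemma \ref{lem5.5} and Lemma \ref{lem5.6}. Let $\{u_k\}_k \subset E$ be an arbitrary $(PS)_c$ sequence with $0<c<c_0$. By Lemma \ref{lem5.5}, $\{u_k\}$ is bounded in $E$ and (passing to a subsequence) satisfies the crucial strict estimate $\limsup_{k\to\infty}\|u_k\|^{n/(n-2)}<\beta_{\gamma,n}/\alpha_0$. Lemma \ref{lem5.6} further supplies $u\in E$ such that $u_k\rightharpoonup u$ in $E$, $\Delta u_k\to\Delta u$ a.e.\ in $\mathbb{R}^n$, and $|\Delta u_k|^{t-2}\Delta u_k\rightharpoonup |\Delta u|^{t-2}\Delta u$ in $L^{t/(t-1)}(\mathbb{R}^n)$ for $t\in\{p,\frac{n}{2}\}$. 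The combination with Theorem \ref{thm1.4} also gives $u_k\to u$ in $L^{\varrho}(\mathbb{R}^n,|x|^{-\gamma}\mathrm{d}x)$ for every $\varrho\geq p^\ast$.

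The first step is to identify $u$ as a weak solution of \eqref{main problem}. For any $v\in C_0^\infty(\mathbb{R}^n)$, the two biharmonic pieces of $\langle J'(u_k),v\rangle$ pass to the limit by the weak $L^{t/(t-1)}$-convergence above, since $\Delta v\in L^t(\mathbb{R}^n)$. For the nonlinear piece, I would argue as in Lemma \ref{lem5.6}: using \eqref{eq1.1}, Hölder's inequality, the compact embedding from Theorem \ref{thm1.4}, Corollary \ref{cor2.6}, and the sharp singular Adams' inequality of Theorem \ref{thm1.1} applied to $u_k/\|u_k\|$ with an exponent $\alpha\eta$ chosen strictly between $\alpha_0\sup_k\|u_k\|^{n/(n-2)}$ and $\beta_{\gamma,n}$ (possible precisely by Lemma \ref{lem5.5}), one concludes $\int_{\mathbb{R}^n} g(x,u_k)v/|x|^\gamma\,\mathrm{d}x\to \int_{\mathbb{R}^n} g(x,u)v/|x|^\gamma\,\mathrm{d}x$. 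Passing to the limit in $\langle J'(u_k),v\rangle=o_k(1)$ and extending by density of $C_0^\infty(\mathbb{R}^n)$ in $E$ yields $J'(u)=0$; in particular, testing with $u$ itself, $\|\Delta u\|_p^p+\|\Delta u\|_{n/2}^{n/2}=\int_{\mathbb{R}^n} g(x,u)u/|x|^\gamma\,\mathrm{d}x$.

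The central and most delicate step is to upgrade this to the global identity
\[
\lim_{k\to\infty}\int_{\mathbb{R}^n}\frac{g(x,u_k)u_k}{|x|^\gamma}\,\mathrm{d}x=\int_{\mathbb{R}^n}\frac{g(x,u)u}{|x|^\gamma}\,\mathrm{d}x.
\]
My plan is to verify the hypotheses of Vitali's convergence theorem for the family $\{g(x,u_k)u_k/|x|^\gamma\}_k$, using pointwise convergence from Lemma \ref{lem5.6} and the Carathéodory property of $g$. For equi-integrability, split by \eqref{eq1.1}: the polynomial piece $\zeta|u_k|^\tau$ is equi-integrable by Theorem \ref{thm1.4}, while for the critical piece, pick $s>1$ close to $1$ and $\alpha>\alpha_0$ close to $\alpha_0$ with $\alpha s\sup_k\|u_k\|^{n/(n-2)}\leq\beta_{\gamma,n}$, and estimate, for any measurable $A\subset\mathbb{R}^n$,
\[
\int_A \frac{|u_k|^q\,\Phi_{\alpha,j_0}(u_k)}{|x|^\gamma}\,\mathrm{d}x\leq \Bigl(\int_A\frac{|u_k|^{qs'}}{|x|^\gamma}\,\mathrm{d}x\Bigr)^{1/s'}\Bigl(\int_{\mathbb{R}^n}\frac{\Phi_{\alpha s,j_0}(u_k)}{|x|^\gamma}\,\mathrm{d}x\Bigr)^{1/s},
\]
where Corollary \ref{cor2.6} dominates the second factor by a quantity uniformly bounded via Theorem \ref{thm1.1}, and the first factor is equi-integrable and tight at infinity by Theorem \ref{thm1.4}. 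A symmetric argument handles the tail $\{|x|>R\}$, completing the Vitali hypotheses.

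To close the proof, combine $\langle J'(u_k),u_k\rangle=o_k(1)$ with the limit just proved to obtain
\[
\|\Delta u_k\|_p^p+\|\Delta u_k\|_{n/2}^{n/2}\longrightarrow \int_{\mathbb{R}^n}\frac{g(x,u)u}{|x|^\gamma}\,\mathrm{d}x=\|\Delta u\|_p^p+\|\Delta u\|_{n/2}^{n/2}.
\]
Weak lower semicontinuity of each of $\|\Delta\cdot\|_p$ and $\|\Delta\cdot\|_{n/2}$ together with the convergence of their sum forces $\|\Delta u_k\|_p\to\|\Delta u\|_p$ and $\|\Delta u_k\|_{n/2}\to\|\Delta u\|_{n/2}$. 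Coupled with $\Delta u_k\rightharpoonup\Delta u$ in the uniformly convex reflexive spaces $L^p(\mathbb{R}^n)$ and $L^{n/2}(\mathbb{R}^n)$, the Radon--Riesz property yields $\Delta u_k\to\Delta u$ strongly in both spaces, hence $\|u_k\|\to\|u\|$ and, again by uniform convexity, $u_k\to u$ in $E$. The main obstacle is the Vitali step for the critical exponential nonlinearity: it is exactly the quantitative gap $\limsup\|u_k\|^{n/(n-2)}<\beta_{\gamma,n}/\alpha_0$ — tailored to the value of $c_0$ in Lemma \ref{lem5.4} — that permits the choice of $\alpha s$ below $\beta_{\gamma,n}$ and thereby activates the sharp Adams' inequality uniformly in $k$.
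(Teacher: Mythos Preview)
Your proof is correct and takes a genuinely different route from the paper's. The paper proceeds by case analysis on whether the weak limit $u$ vanishes: when $u=0$ it first establishes the global $L^1$ convergence $G(x,u_k)/|x|^\gamma\to G(x,u)/|x|^\gamma$ (via $(g_4)$--$(g_5)$ and generalized dominated convergence) and derives a contradiction with $c>0$; when $u\neq 0$ it invokes the concentration-compactness principle (Theorem~\ref{thm1.2}) applied to $\psi_k=u_k/\|u_k\|$ in order to control $\int g(x,u_k)(u_k-u)/|x|^\gamma$, and then concludes $\Delta u_k\to\Delta u$ in $L^t$ via Simon's inequality. Your argument is more unified: by selecting $\alpha s\sup_k\|u_k\|^{n/(n-2)}\leq\beta_{\gamma,n}$ (possible precisely because of Lemma~\ref{lem5.5}) you obtain equi-integrability and tightness of $g(x,u_k)u_k/|x|^\gamma$ directly from Theorem~\ref{thm1.1} and the compact embedding Theorem~\ref{thm1.4}, run Vitali once to get $\int g(x,u_k)u_k/|x|^\gamma\to\int g(x,u)u/|x|^\gamma$, and close with Radon--Riesz in the uniformly convex spaces $L^p$ and $L^{n/2}$. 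This bypasses Theorem~\ref{thm1.2} entirely and avoids both the case split and Simon's inequality; the paper's longer approach, however, showcases its concentration-compactness principle in action and produces the auxiliary $L^1$ convergence of $G(x,u_k)/|x|^\gamma$, which may be of independent interest.
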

\begin{proof}
Let $\{u_k\}_k\subset E$ be a $(PS)_c$ sequence for $J$ with $0<c<c_0$. It follows that
\begin{align}\label{eq5.16}
    \frac{1}{p}\|\Delta u_k\|^p_p+\frac{2}{n}\|\Delta u_k\|^{\frac{n}{2}}_{\frac{n}{2}}-\int_{\mathbb{R}^n}\frac{G(x,u_k)}{|x|^\gamma}\,\mathrm{d}x\to c\quad\text{as}\quad k\to\infty
\end{align}
and 
\begin{align}\label{eq5.17}
  | \langle{J^\prime(u_k),\psi}\rangle |\leq \zeta_k\|\psi\|,~\forall~ \psi\in E,
\end{align}
where $\zeta_k\to 0$ as $k\to\infty$. Pick $\psi=u_k$ in \eqref{eq5.17}, then one has
\begin{align}\label{eq5.18}
 \int_{\mathbb{R}^n}\frac{g(x,u_k)u_k}{|x|^\gamma}\,\mathrm{d}x-\|\Delta u_k\|^p_p-\|\Delta u_k\|^{\frac{n}{2}}_{\frac{n}{2}}\leq \zeta_k\|\ u_k\|.  
\end{align}
By using Lemma \ref{lem5.5}, we infer that $\{u_k\}_k$ is a bounded sequence in $E$ and \eqref{eq5.4} holds. Therefore, one can deduce from \eqref{eq5.16} and \eqref{eq5.18} that
\begin{align}\label{eq5.19}
\int_{\mathbb{R}^n}\frac{|G(x,u_k)|}{|x|^\gamma}\,\mathrm{d}x\leq C\quad\text{and}\quad \int_{\mathbb{R}^n}\frac{|g(x,u_k)u_k|}{|x|^\gamma}\,\mathrm{d}x \leq C  
\end{align}
for some suitable constant $C>0$. Notice that \eqref{eq5.10} also holds. Due to \eqref{eq1.1}, Corollary \ref{cor3.1}, Lemma \ref{lem2.5} and the Hölder's inequality, one has $\frac{g(x,u)}{|x|^\gamma}\in L^1_{\textit{loc}}(\mathbb{R}^n)$. Now, by employing a similar strategy developed in de Souza-do Ó \cite{de Souza-do O-2011}, we have
\begin{align}\label{eq5.20}
  \frac{g(x,u_k)}{|x|^\gamma}\to \frac{g(x,u)}{|x|^\gamma}\quad\text{in}\quad L^1_{\textit{loc}}(\mathbb{R}^n)\quad\text{as}\quad k\to\infty. 
\end{align}
Next, we claim that
\begin{align}\label{eq5.21}
  \frac{G(x,u_k)}{|x|^\gamma}\to \frac{G(x,u)}{|x|^\gamma}\quad\text{in}\quad L^1(\mathbb{R}^n)\quad\text{as}\quad k\to\infty. 
\end{align}
To prove this, take $R>0$ and let us write
\begin{align*}
\int_{\mathbb{R}^n}\frac{|G(x,u_k)-G(x,u)|}{|x|^\gamma}\,\mathrm{d}x=\int_{B_R}\frac{|G(x,u_k)-G(x,u)|}{|x|^\gamma}\,\mathrm{d}x+\int_{B_R^c}\frac{|G(x,u_k)-G(x,u)|}{|x|^\gamma}\,\mathrm{d}x.    
\end{align*}
By using \ref{g4} and \ref{g5}, we can find a constant $K_0>0$ such that
\begin{align}\label{eq5.22}
    \frac{G(x,u_k)}{|x|^\gamma}\leq K_0 \frac{|g(x,u_k)|}{|x|^\gamma},~\forall~ x\in \mathbb{R}^n. 
\end{align}
Hence, we obtain from \eqref{eq5.10}, \eqref{eq5.20}, \eqref{eq5.22} and the generalized Lebesgue dominated convergence theorem that
\begin{align}\label{eq5.23}
    \lim_{R\to\infty}\lim_{k\to\infty} \int_{B_R}\frac{|G(x,u_k)-G(x,u)|}{|x|^\gamma}\,\mathrm{d}x=0.
\end{align}
Join us in writing
\begin{align*}
 \int_{B_R^c}\frac{|G(x,u_k)-G(x,u)|}{|x|^\gamma}\,\mathrm{d}x&=\int_{B_R^c\cap\{x\in\mathbb{R}^n\colon~|u_k|> K\}}\frac{|G(x,u_k)-G(x,u)|}{|x|^\gamma}\,\mathrm{d}x\\
 &\quad\quad+\int_{B_R^c\cap\{x\in\mathbb{R}^n\colon~|u_k|\leq K\}}\frac{|G(x,u_k)-G(x,u)|}{|x|^\gamma}\,\mathrm{d}x.   
\end{align*}
In view of \eqref{eq5.19} and \eqref{eq5.22}, we have the following estimates
\begin{align*}
 \int_{B_R^c\cap\{x\in\mathbb{R}^n\colon~|u_k|> K\}}\frac{|G(x,u_k)-G(x,u)|}{|x|^\gamma}\,\mathrm{d}x &\leq \frac{K_0}{K}\int_{B_R^c\cap\{x\in\mathbb{R}^n\colon~|u_k|> K\}}\frac{|g(x,u_k)u_k|}{|x|^\gamma}\,\mathrm{d}x+\int_{B_R^c}\frac{G(x,u)}{|x|^\gamma}\,\mathrm{d}x\\
 &\leq \frac{CK_0}{K}+\int_{B_R^c}\frac{G(x,u)}{|x|^\gamma}\,\mathrm{d}x,
\end{align*}
 which together with \eqref{eq1.2}, Corollary \ref{cor3.1}, Theorem \ref{thm1.4} and the Lebesgue dominated convergence theorem yields
\begin{align*}
 \lim_{K\to\infty} \lim_{R\to\infty}\lim_{k\to\infty}  \int_{B_R^c\cap\{x\in\mathbb{R}^n\colon~|u_k|> K\}}\frac{|G(x,u_k)-G(x,u)|}{|x|^\gamma}\,\mathrm{d}x=0. 
\end{align*}
Due to \ref{g2}, we can find a constant $C_1>0$ such that $|G(x,s)|\leq C_1|s|^\tau$ for all $|s|\leq K$. It follows from Theorem \ref{thm1.4} that
\begin{align*}
    \int_{B_R^c\cap\{x\in\mathbb{R}^n\colon~|u_k|\leq K\}}\frac{|G(x,u_k)-G(x,u)|}{|x|^\gamma}\,\mathrm{d}x&\leq \frac{C_1}{R^{\frac{\gamma}{2}}}\int_{B_R^c\cap\{x\in\mathbb{R}^n\colon~|u_k|\leq K\}}\frac{|u_k|^\tau}{|x|^{\frac{\gamma}{2}}}\,\mathrm{d}x+\int_{B_R^c}\frac{G(x,u)}{|x|^\gamma}\,\mathrm{d}x\\
    & \leq \frac{C_2}{R^{\frac{\gamma}{2}}}\sup_{k\in\mathbb{N}}\|u_k\|^\tau+\int_{B_R^c}\frac{G(x,u)}{|x|^\gamma}\,\mathrm{d}x,
\end{align*}
where $C_2>0$ is a suitable constant independent on $k$. The above inequality with \eqref{eq1.2}, Corollary \ref{cor3.1}, Theorem \ref{thm1.4} and the Lebesgue dominated convergence theorem implies
\begin{align*}
 \lim_{K\to\infty} \lim_{R\to\infty}\lim_{k\to\infty}  \int_{B_R^c\cap\{x\in\mathbb{R}^n\colon~|u_k|\leq  K\}}\frac{|G(x,u_k)-G(x,u)|}{|x|^\gamma}\,\mathrm{d}x=0. 
\end{align*}
In conclusion, we have
\begin{align}\label{eq5.24}
    \lim_{R\to\infty}\lim_{k\to\infty} \int_{B^c_R}\frac{|G(x,u_k)-G(x,u)|}{|x|^\gamma}\,\mathrm{d}x=0.
\end{align}
In virtue of \eqref{eq5.23} and \eqref{eq5.24}, we deduce that \eqref{eq5.21} holds. Hence, we obtain from \eqref{eq5.17}, \eqref{eq5.20}, Lemma \ref{lem5.6} and the Lebesgue dominated convergence theorem that
\begin{align*}
    \int_{\mathbb{R}^n} |\Delta u|^{p-2}\Delta u\Delta \psi\,\mathrm{d}x+\int_{\mathbb{R}^n} |\Delta u|^{\frac{n}{2}-2}\Delta u\Delta \psi~\mathrm{d}x=\int_{\mathbb{R}^n}\frac{g(x,u)\psi}{|x|^\gamma}\,\mathrm{d}x,~\forall~ \psi\in C^\infty_0(\mathbb{R}^n).
\end{align*}
By exploiting the density of $C^\infty_0(\mathbb{R}^n)$ in $E$, it follows that $u$ is a weak solution of the problem \eqref{main problem}. Now, to complete the proof, it is sufficient to show the compactness of the $(PS)_c$ sequence $\{u_k\}_k$ in $E$. Invoking the fact that $c>0$, we have two possibilities as discussed below.

\textbf{Case-1:}$(c\neq 0,~u=0)$ In this situation, from \eqref{eq5.21}, one has
\begin{align*}
     \lim_{k\to\infty}\int_{\mathbb{R}^n}\frac{G(x,u_k)}{|x|^\gamma}\,\mathrm{d}x=0.
\end{align*}
This together with $J(u_k)\to c$ as $k\to\infty$ yields
\begin{align*}
    c+o_k(1)&\notag=\frac{1}{p}\|\Delta u_k\|^p_p+\frac{2}{n}\|\Delta u_k\|^{\frac{n}{2}}_{\frac{n}{2}}\geq\frac{2}{n}\big(\|\Delta u_k\|^p_p+\|\Delta u_k\|^{\frac{n}{2}}_{\frac{n}{2}}\big) \\
    &\geq \bigg(\frac{2}{n}-\frac{1}{\mu}\bigg)\Big(\|\Delta u_k\|_p^p+\|\Delta u_k\|_{\frac{n}{2}}^{\frac{n}{2}}\Big)\quad\text{as}\quad k\to\infty.
\end{align*}
The above inequality together with $c\in(0,c_0)$ implies the validity of \eqref{eq5.4}, we thank to Lemma \ref{lem5.5}. Hence, without loss of generality up to a subsequence, still not relabelled, we can assume that
\begin{align*}
    \sup_{k\in\mathbb{N}}\|u_k\|^{\frac{n}{n-2}}<\frac{\beta_{\gamma,n}}{\alpha_0}.
\end{align*}
Let us fix $m\in(\|u_k\|^{\frac{n}{n-2}},\frac{\beta_{\gamma,n}}{\alpha_0})$ and choose $r$, $r^\prime>1$ such that $\frac{1}{r}+\frac{1}{r^\prime}=1$. Pick $\alpha>\alpha_0$ close to $\alpha_0$, $r>1$ close to $1$ such that $\alpha r m< \beta_{\gamma,n}$. Therefore, by using \eqref{eq1.1}, Theorem \ref{thm1.1}, Corollary \ref{cor2.6}, Theorem \ref{thm1.4} and the Hölder's inequality, we have
\begin{align*}
    \bigg|\int_{\mathbb{R}^n}\frac{g(x,u_k)u_k}{|x|^\gamma}\,\mathrm{d}x\bigg|&\leq\zeta \int_{\mathbb{R}^n}\frac{|u_k|^\tau}{|x|^{\gamma}}\,\mathrm{d}x+D_\zeta \bigg(\int_{\mathbb{R}^n}\frac{|u_k|^{qr^\prime}}{|x|^{\gamma}}\,\mathrm{d}x\bigg)^{\frac{1}{r^\prime}}\bigg(\int_{\mathbb{R}^n}\frac{\Phi_{\alpha r,j_0}(u_k)}{|x|^\gamma}\,\mathrm{d}x\bigg)^{\frac{1}{r}}\\
    & \lesssim \int_{\mathbb{R}^n}\frac{|u_k|^\tau}{|x|^{\gamma}}\,\mathrm{d}x+ \bigg(\int_{\mathbb{R}^n}\frac{|u_k|^{qr^\prime}}{|x|^{\gamma}}\,\mathrm{d}x\bigg)^{\frac{1}{r^\prime}}\bigg(\int_{\mathbb{R}^n}\frac{\Phi_{\beta_{\gamma,n},j_0}(u_k/\|u_k\|)}{|x|^\gamma}\,\mathrm{d}x\bigg)^{\frac{1}{r}}\\
    & \lesssim \int_{\mathbb{R}^n}\frac{|u_k-u|^\tau}{|x|^{\gamma}}\,\mathrm{d}x+ \bigg(\int_{\mathbb{R}^n}\frac{|u_k-u|^{qr^\prime}}{|x|^{\gamma}}\,\mathrm{d}x\bigg)^{\frac{1}{r^\prime}}\to 0\quad\text{as}\quad k\to\infty.
\end{align*}
It follows that
\begin{align}\label{eq5.25}
    \lim_{k\to\infty} \int_{\mathbb{R}^n}\frac{g(x,u_k)u_k}{|x|^\gamma}\,\mathrm{d}x=0.
\end{align}
Hence, we deduce from $\langle{J^\prime(u_k),u_k}\rangle\to 0$ as $k\to\infty$ and \eqref{eq5.25} that $u_k\to 0$ in $E$ as $k\to\infty$. Consequently, since $J\in C^1(E,\mathbb{R})$, one can easily obtain  $c=0$, which is a contradiction. Thus, this situation is impossible.

\textbf{Case-2:}$(c\neq 0,~u\neq 0)$
In view of $u_k\rightharpoonup u$ in $E$ as $k\to\infty$, we obtain from the lower semicontinuity of the norm that, up to a subsequence, still denoted by itself, such that $\|u\|\leq\lim_{k\to\infty}\|u_k\|$. Indeed, if $\|u\|=\lim_{k\to\infty}\|u_k\|$, then by using the result of Autuori-Pucci \cite[Corollary A.2]{Autuori-Pucci-2013}, one has $u_k\to u$ in $E$ as $k\to\infty$. It follows that the sequence $\{u_k\}_k$ satisfies the $(PS)_c$ compactness condition and thus $u$ is a nontrivial mountain pass solution of the problem  \eqref{main problem}, we thank to $J\in C^1(E,\mathbb{R})$.

However, to complete the proof, we simply need to show $\|u\|<\lim_{k\to\infty}\|u_k\|$ is not possible. Assume by contradiction that $\|u\|<\lim_{k\to\infty}\|u_k\|$ holds true. Define the sequence $\{\psi_k\}_k$ and $\psi_0$ as follows
\begin{align*}
    \psi_k:=\frac{u_k}{\|u_k\|}\quad\text{and}\quad\psi_0:=\frac{u}{\lim_{k\to\infty}\|u_k\|}.
\end{align*}
In view of \eqref{eq5.10}, it is standard to check that $\|\psi_k\|=1$, $\psi_0\neq 0$ and $\psi_k\rightharpoonup \psi_0$ in $E$ as $k\to\infty$. Moreover, if $\|\psi_0\|=1$, then again by using \cite[Corollary A.2]{Autuori-Pucci-2013}, we have $u_k\to u$ in $E$ as $k\to\infty$. This contradicts $\|u\|<\lim_{k\to\infty}\|u_k\|$ and thus, the proof is finished. 

Next, we consider the case $\|\psi_0\|<1$. By direct calculations and arguing similarly as in the proof of Lemma \ref{lem5.5}, up to a subsequence still denoted by itself, one sees that
\begin{align*}
\lim_{k\to\infty}\|u_k\|^{\frac{n}{n-2}}<\frac{\beta_{\gamma,n}}{\alpha_0}\big(1-\|\psi_0\|^{\frac{n}{2}}\big)^{-\frac{2}{n-2}} .   
\end{align*}
Let $r$, $r^\prime>1$ be such that $\frac{1}{r}+\frac{1}{r^\prime}=1$. From the above inequality, we infer that there exists $m>0$, $\alpha>\alpha_0$ close to $\alpha_0$, $r>1$ close to $1$ such that
\begin{align*}
    \alpha r \|u_k\|^{\frac{n}{n-2}}<m<\beta_{\gamma,n}\big(1-\|\psi_0\|^{\frac{n}{2}}\big)^{-\frac{2}{n-2}} \quad\text{for large}\quad k.
\end{align*}
By using \eqref{eq1.1}, the H\"older's inequality, Theorem \ref{thm1.2}, Theorem \ref{thm1.4}, Corollary \ref{cor2.6}, we have for large $k$ that
\begin{align*}
    \bigg|\int_{\mathbb{R}^n}\frac{g(x,u_k)(u_k-u)}{|x|^\gamma}\,\mathrm{d}x\bigg|&\leq \zeta \bigg(\int_{\mathbb{R}^n}\frac{|u_k|^{\tau}}{|x|^{\gamma}}\,\mathrm{d}x\bigg)^{\frac{\tau-1}{\tau}}\bigg(\int_{\mathbb{R}^n}\frac{|u_k-u|^{\tau}}{|x|^{\gamma}}\,\mathrm{d}x\bigg)^{\frac{1}{\tau}}\\
    &\quad+D_\zeta \bigg(\int_{\mathbb{R}^n}\frac{|u_k|^{(q-1)r^\prime}|u_k-u|^{r^\prime}}{|x|^{\gamma}}\,\mathrm{d}x\bigg)^{\frac{1}{r^\prime}}\bigg(\int_{\mathbb{R}^n}\frac{\Phi_{\alpha r,j_0}(u_k)}{|x|^\gamma}\,\mathrm{d}x\bigg)^{\frac{1}{r}}\\
    & \lesssim \bigg(\int_{\mathbb{R}^n}\frac{|u_k|^{\tau}}{|x|^{\gamma}}\,\mathrm{d}x\bigg)^{\frac{\tau-1}{\tau}}\bigg(\int_{\mathbb{R}^n}\frac{|u_k-u|^{\tau}}{|x|^{\gamma}}\,\mathrm{d}x\bigg)^{\frac{1}{\tau}}+\bigg(\int_{\mathbb{R}^n}\frac{|u_k|^{q r^\prime}}{|x|^{\gamma}}\,\mathrm{d}x\bigg)^{\frac{q-1}{q r^\prime}}\\
    &\quad\times\bigg(\int_{\mathbb{R}^n}\frac{|u_k-u|^{q r^\prime}}{|x|^{\gamma}}\,\mathrm{d}x\bigg)^{\frac{1}{q r^\prime}} \bigg(\int_{\mathbb{R}^n}\frac{\Phi_{m,j_0}(\psi_k)}{|x|^\gamma}\,\mathrm{d}x\bigg)^{\frac{1}{r}} \\
    &\lesssim \bigg(\int_{\mathbb{R}^n}\frac{|u_k-u|^{\tau}}{|x|^{\gamma}}\,\mathrm{d}x\bigg)^{\frac{1}{\tau}}+\bigg(\int_{\mathbb{R}^n}\frac{|u_k-u|^{q r^\prime}}{|x|^{\gamma}}\,\mathrm{d}x\bigg)^{\frac{1}{q r^\prime}}=o_k(1)\quad\text{as}\quad k\to\infty.
\end{align*}
It follows that
\begin{align*}
\lim_{k\to\infty}\int_{\mathbb{R}^n}\frac{g(x,u_k)(u_k-u)}{|x|^\gamma}\,\mathrm{d}x=0.   
\end{align*}
The above convergence together with $\langle{J^\prime(u_k),u_k-u}\rangle=o_k(1)$ as $k\to\infty$ yields
\begin{align*}
    \int_{\mathbb{R}^n} |\Delta u_k|^{p-2}\Delta u_k\Delta (u_k-u)\,\mathrm{d}x+\int_{\mathbb{R}^n} |\Delta u_k|^{\frac{n}{2}-2}\Delta u_k\Delta (u_k-u)\,\mathrm{d}x=o_k(1)\quad\text{as}\quad k\to\infty.
\end{align*}
In addition, we obtain from $u_k\rightharpoonup u$ in $E$ as $k\to\infty$ that
\begin{align*}
    \int_{\mathbb{R}^n} |\Delta u|^{p-2}\Delta u\Delta (u_k-u)\,\mathrm{d}x+\int_{\mathbb{R}^n} |\Delta u|^{\frac{n}{2}-2}\Delta u\Delta (u_k-u)\,\mathrm{d}x=o_k(1)\quad\text{as}\quad k\to\infty.
\end{align*}
Due to convexity of the map $s\mapsto \frac{1}{\eta}|s|^\eta$ for all $\eta\in[1,+\infty)$, the above convergences imply that
\begin{align}\label{eq5.26}
   \int_{\mathbb{R}^n} \big(|\Delta u_k|^{t-2}\Delta u_k-|\Delta u|^{t-2}\Delta u\big)\Delta (u_k-u)\,\mathrm{d}x=o_k(1) \quad\text{as}\quad k\to\infty 
\end{align}
for $t\in\{p,\frac{n}{2}\}$. Recall the well-known Simon's inequality as follows
\begin{equation}\label{eq5.27}
|x-y|^\eta\leq\begin{cases}
C_\eta\big[\big(|x|^{\eta-2}x-|y|^{\eta-2}y\big)(x-y)\big]^\frac{\eta}{2}\big[|x|^\eta+|y|^\eta\big]^\frac{2-\eta}{2}\quad\text{if}~1<\eta<2,\\
c_\eta\big(|x|^{\eta-2}x-|y|^{\eta-2}y\big)(x-y)\quad\quad\quad\quad\quad\quad\quad\quad\quad\quad\text{if}~\eta\geq 2,
\end{cases}  
\end{equation}
for all $x,y\in \mathbb{R}^d$, where $d\in[1,+\infty)$, $C_\eta$ and $c_\eta$ are positive constants depending only on $\eta$. From  \eqref{eq5.26} and \eqref{eq5.27}, we obtain for all $t\in\{p,\frac{n}{2}\}\geq 2$ that 
\begin{align*} \|\Delta u_k-\Delta u\|^t_t &\lesssim \int_{\mathbb{R}^n} \big(|\Delta u_k|^{t-2}\Delta u_k-|\Delta u|^{t-2}\Delta u\big)\Delta (u_k-u)\,\mathrm{d}x=o_k(1) \quad\text{as}\quad k\to\infty. \end{align*}
Similarly, from \eqref{eq5.26}, \eqref{eq5.27} and the  H\"older's inequality, one can deduce for $t=p\in(1,2)$ that
\begin{align*} \|\Delta u_k-\Delta u\|^t_t &\lesssim\int_{\mathbb{R}^n}\Big(\big(|\Delta u_k|^{t-2}\Delta u_k-|\Delta u|^{t-2}\Delta u\big)\Delta (u_k-u)\Big)^{\frac{t}{2}}\Big(|\Delta u_k|^{t}+|\Delta u|^{t}\Big)^{\frac{2-t}{2}}\,\mathrm{d}x\\
&\lesssim\int_{\mathbb{R}^n}\Big(\big(|\Delta u_k|^{t-2}\Delta u_k-|\Delta u|^{t-2}\Delta u\big)\Delta (u_k-u)\Big)^{\frac{t}{2}}\Big(\big(|\Delta u_k|^t\big)^{^{\frac{2-t}{2}}}+\big(|\Delta u|^t\big)^{^{\frac{2-t}{2}}}\Big)\,\mathrm{d}x\\
&\lesssim \Bigg(  \int_{\mathbb{R}^n} \big(|\Delta u_k|^{t-2}\Delta u_k-|\Delta u|^{t-2}\Delta u\big)\Delta (u_k-u)\,\mathrm{d}x\Bigg)^{\frac{t}{2}} \Big(\|\Delta u_k\|_t^{\frac{(2-t)t}{2}}+\|\Delta u\|_t^{\frac{(2-t)t}{2}}\Big)\\
&\lesssim \Bigg(  \int_{\mathbb{R}^n} \big(|\Delta u_k|^{t-2}\Delta u_k-|\Delta u|^{t-2}\Delta u\big)\Delta (u_k-u)\,\mathrm{d}x\Bigg)^{\frac{t}{2}} =o_k(1)\quad\text{as}\quad k\to\infty.
\end{align*}
Hence, we deduce that  $\Delta u_k\to \Delta u$ in $L^t(\mathbb{R}^n)$ as $k\to\infty$ for any $ t\in\{p,\frac{n}{2}\}$ and thus, we obtain $u_k\to u$ in $E$ as $k\to\infty$. Moreover, it contradicts $\|u\|<\lim_{k\to\infty}\|u_k\|$ and thus, the result follows.
\end{proof}
\begin{proof}[\bf{Proof of Theorem \ref{thm1.5}}.]
Let $\lambda_0$ be defined as in Lemma \ref{lem5.4} and the hypothesis \ref{g6} holds for all $\lambda\geq\lambda_0$. Now, by employing Proposition \ref{prop5.7} and Theorem \ref{thm5.2}, one can see that the functional $J$ has a nontrivial critical point $u\in E$, which is a weak solution to the problem \eqref{main problem}. This completes the proof.
\end{proof}
\section*{Acknowledgements}
DKM expresses his deep appreciation for the DST INSPIRE PhD Fellowship, with reference number DST/INSPIRE/03/2019/000265, supported by the Government of India. TM expresses gratitude for the assistance received from the CSIR-HRDG grant, sanction number 25/0324/23/EMR-II. AS received funding from the DST-INSPIRE Grant DST/INSPIRE/04/2018/002208, funded by the Government of India.

\end{document}